\documentclass[11pt]{amsart}
\usepackage[T1]{fontenc}
\usepackage[utf8]{inputenc}
\usepackage{lmodern}
\usepackage{amsmath,amssymb,amsthm,mathtools}
\usepackage{graphicx,booktabs,array,microtype}
\usepackage{xcolor}
\usepackage{hyperref}
\hypersetup{hidelinks,hypertexnames=false,
 pdftitle={Sharp bounds, stability, and arithmetic fluctuations for induced pentagons},
 pdfauthor={Youwen Xu},pdfsubject={Induced pentagons under an edge constraint; exact certificates and stability},
 pdfkeywords={induced pentagons, edge inducibility, Clebsch graph, stability}}
\newtheorem{theorem}{Theorem}[section]
\newtheorem{lemma}[theorem]{Lemma}
\newtheorem{proposition}[theorem]{Proposition}
\newtheorem{corollary}[theorem]{Corollary}
\theoremstyle{definition}

\theoremstyle{remark}
\newtheorem{remark}[theorem]{Remark}

\title[Induced pentagons with an edge constraint]{Sharp bounds, stability, and arithmetic fluctuations for induced pentagons}

\author[]{Youwen Xu}
\hypersetup{pdfauthor={Youwen Xu}}
\makeatletter
\renewcommand{\@setauthors}{%
  \begingroup
  \trivlist\centering\item\relax
    {\normalfont\normalsize\bfseries\authors\par}%
    \vskip .35em
    {\normalfont\small East China University of Science and Technology\par}%
  \endtrivlist
  \endgroup
}
\let\shortauthors\@empty
\makeatother

\date{October 27, 2026}
\subjclass[2020]{Primary 05C35; Secondary 05C38, 05C80}
\keywords{Induced pentagon, edge inducibility, Clebsch graph, flag algebra, graphon, extremal stability}
\begin{document}

\begin{abstract}
We prove that every graph with \(m>0\) edges contains at most
\[
 \frac{3}{2\sqrt{2}}\left(\frac{m}{5}\right)^{5/2}
\]
induced pentagons. Equality holds precisely for balanced blow-ups of the
Clebsch graph, together with isolated vertices. The bound and equality
classification extend to arbitrary nonnegative vertex weights and to all
graphons, including fractional graphons. Moreover, a deficit of
\(\varepsilon\) in the normalized ratio can be repaired by
\(O(\sqrt{\varepsilon}\,m)\) edge edits on the original vertex set,
producing an exactly balanced Clebsch blow-up and isolates. The exponent
\(1/2\) is optimal. We further separate the linear cost of adjacency errors
from the quadratic cost of class imbalance. This yields an arithmetic
description of the finite-edge deficit and its explicit second-order term,
with a uniform power-saving remainder. For each fixed integer \(j\), the
exact extremal count at \(m=40k^2+j\) is eventually an integer coefficient
polynomial in \(k\), computable by a finite enumeration; we do not quantify
its threshold of validity. The proof combines global variational conditions,
an active-density bound, and an exact rational certificate on the
\(1{,}044\) graphs of order seven. A positive-slope identity permits a
single rational reference density. An exact conditional span excludes
triangles at equality, after which degree and codegree identities determine
the Clebsch structure. All finite certificates admit independent
verification using integer arithmetic.
\end{abstract}


\maketitle

\section{Introduction and results}

The edge version of inducibility asks how many induced copies of a fixed
graph can occur in a graph with a prescribed number of edges. Recent work
of Wang, Zhao, and Lu~\cite{Wang2026} and of Chao, Cohen Antonir, Li, and
Yu~\cite{Chao2025} studies this problem through entropy inequalities and
local directed graphs. For the pentagon, both works conjecture that balanced
blow-ups of \(C_5\) give the asymptotically optimal count
\((m/5)^{5/2}\); see \cite[Conjecture~8]{Wang2026} and
\cite[Conjecture~1.7]{Chao2025}. We determine the sharp constant, which is
larger by a factor of \(3/(2\sqrt{2})\), and prove the corresponding rigidity
and stability statements. We then resolve the second-order arithmetic
oscillation and obtain an eventual exact polynomial at each fixed offset
from the equality sequence.

The extremal construction itself is known. Davey, Hurley, de Joannis de
Verclos, Kang, and Volec~\cite[Lemmas~1.4, 7.1, and 7.2]{Davey2026} record
the Clebsch pentagon count and its balanced blow-ups while studying
triangle-free pentagon counts normalized by \(|G|\Delta(G)^4\). Our upper
bound concerns arbitrary graphs with a prescribed edge count. We include
the elementary construction count below to fix the normalization.

With the number of vertices prescribed, the induced-pentagon problem has
a different extremal structure. Balogh, Hu, Lidick\'y, and
Pfender~\cite{Balogh2016} established the asymptotic result using iterated
\(C_5\) blow-ups; Lidick\'y, Mattes, and Pfender~\cite{Lidicky2023}
subsequently determined the extremal count for every order. Those results
provide context for the induced-count problem. Here the active vertex count
is part of the optimization and may be accompanied by an arbitrary number
of isolated vertices.

Let \(H\) be a finite simple graph with nonnegative vertex weights
\(x=(x_v)_{v\in V(H)}\). Define
\[
 E(H,x)=\sum_{uv\in E(H)}x_ux_v,\qquad
 C(H,x)=\sum_{\substack{U\subseteq V(H)\\ H[U]\cong C_5}}
                  \prod_{v\in U}x_v .
\]
Thus all pentagons counted by \(C\) are induced, including when \(H\)
contains triangles. Set
\[
 c_*=\frac{3}{50\sqrt{10}}=\frac{192}{40^{5/2}},
 \qquad
 R_*=\frac{3}{2\sqrt{2}}=5^{5/2}c_*.
\]
The Clebsch graph is the Cayley graph on \(\mathbb F_2^4\) generated by
\[
 \{e_1,e_2,e_3,e_4,e_1+e_2+e_3+e_4\}.
\]
It has degree five and forty edges. The only nonempty linear dependence
among these five generators is their full sum. Every closed walk of length
five therefore uses all five generators once and is a simple pentagon.
There are \(16\cdot 5!/10=192\) pentagons, and there are no triangles.
Every pentagon is induced.

\begin{theorem}[Sharp weighted bound and equality]\label{thm:weighted}
Every finite simple graph with nonnegative vertex weights satisfies
\begin{equation}
 C(H,x)\le c_*E(H,x)^{5/2}.\tag{1}\label{eq:weighted}
\end{equation}
If \(E(H,x)>0\), equality holds if and only if, after zero-weight vertices
are discarded, the graph consists of a Clebsch blow-up and isolated vertices,
with the sixteen Clebsch classes having equal total weight.
\end{theorem}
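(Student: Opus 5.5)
We will prove \eqref{eq:weighted} by a compactness and variational argument over weighted graphs, reduced to a finite flag-algebra style certificate. Both sides of \eqref{eq:weighted} are homogeneous: $C$ has degree five and $E^{5/2}$ has degree five in $x$. So we may normalize $E(H,x)=1$ and maximize $C(H,x)$. Isolated and zero-weight vertices are irrelevant, so we restrict to graphs without isolated vertices. We first check the Clebsch value. For the Clebsch graph with class weights $t$, we have $E=40t^2$ and $C=192t^5$. Hence $C/E^{5/2}=192/40^{5/2}=c_*$, so the constant cannot be improved.

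For a fixed $H$, let $x$ be a maximizer of $C-\lambda E^{5/2}$ on the simplex, or equivalently a maximizer of $C$ subject to $E=1$. The Lagrange conditions give, for each vertex $v$ of positive weight,
\[
\partial_v C = \tfrac52 \lambda\, E^{3/2}\,\partial_v E ,
\]
where $\partial_v E=\sum_{u\sim v}x_u$ is the weighted degree and $\partial_v C$ is the weighted count of induced pentagons through $v$. Summing against $x_v$ and using Euler's relation gives $5C=5\lambda E^{5/2}$, so $\lambda=C/E^{5/2}$. The ratio is invariant under twin merging, and the maximizer can be taken with positive weights on an active set. These global conditions say that every active vertex has pentagon-degree proportional to its degree. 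I expect them to yield an active-density bound: the mass of active vertices is controlled by the edge density, since $\sum_v x_v=1$ after a second normalization.

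The core inequality is then a flag-algebra statement on graphs of order seven. Let $p$ denote the vertex probability measure $x/\sum x$. Write the target as $C\le c_*E^{5/2}$ with $E$ now the edge density $e$ and $C$ the pentagon density. The exponent $5/2$ is nonpolynomial, so we linearize around a reference density $e_0$ using concavity:
\[
e^{5/2}\ge e_0^{5/2}+\tfrac52 e_0^{3/2}(e-e_0).
\]
Here $e_0$ is the Clebsch density. It then suffices to prove the polynomial inequality
\[
C\le c_*\Bigl(e_0^{5/2}+\tfrac52 e_0^{3/2}(e-e_0)\Bigr) + (\text{terms vanishing under the variational conditions}).
\]
The variational conditions supply the extra linear constraints (degree and pentagon-degree proportionality, and the active-density bound) needed for this tangent-line reduction to remain valid away from $e_0$. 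The step I expect to be hardest is choosing $e_0$ rational and justifying one reference density for all $e$. I would use a positive-slope identity: the deficit is monotone in the direction of $e$, which lets a single rational $e_0$ suffice. We then prove the resulting inequality by a sum-of-squares certificate on the $1{,}044$ graphs of order seven, using rational semidefinite matrices checked in exact arithmetic. I anticipate that the main obstacle is producing this exact rational certificate with zero slack at the Clebsch point.

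For equality, complementary slackness in the certificate forces every order-seven subgraph of positive density to have zero coefficient. An exact conditional span computation should show that this excludes triangles: any triangle contributes strictly positive slack. With triangle-freeness, equality in the variational identities forces regularity and constant codegree conditions. These are that every vertex has weighted degree $d$, adjacent pairs have zero common neighborhood, and nonadjacent twin-free pairs have fixed codegree. We then identify the twin-reduced graph as strongly regular with parameters $(16,5,0,2)$, which is the Clebsch graph. Equal class weights follow from the equality case of the Lagrange conditions, since the Hessian restricted to the Clebsch blow-up is definite modulo scaling. Conversely, the computation above shows that these configurations attain equality.
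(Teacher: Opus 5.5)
\textbf{The tangent-line reduction fails.} The inequality $e^{5/2}\ge e_0^{5/2}+\tfrac52e_0^{3/2}(e-e_0)$ is correct (by convexity, not concavity). But it makes the polynomial statement you want to certify strictly stronger than the target at every $e\ne e_0$, and that stronger statement is false on the padded Clebsch graphons. Balanced Clebsch on active mass $q<1$, with isolates elsewhere, has $C=c_*e^{5/2}$ at density $e=q^2e_0<e_0$; for $e<\tfrac35e_0$ the tangent value is even negative. These graphons are global maximizers, so they satisfy every constraint you list. Indeed $L=\kappa d$ holds trivially at isolates, and an active-density bound concerns the graphon after isolates are removed, which here is exactly Clebsch. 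So ``discarding isolated vertices'' is invisible to your certificate. Monotonicity of the deficit in $e$ carries no information either, since the deficit vanishes identically along this family.

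The paper does the opposite of discarding isolates. Adding isolated mass preserves both the ratio and global maximality (Lemma~2.5), so it pads a global maximizer \emph{down} to one fixed density. There the target is the linear inequality $\theta\le\tau$, and no linearization is needed. This requires the active-density lower bound $p_{\mathrm{act}}>5/144$ of Proposition~3.3. That is why the reference density is $p_*=5/144$ (Clebsch on active mass $1/3$) rather than the Clebsch density. The positive-slope identity (29) is a different device. The first-order columns are evaluated at the rational value $\tau$ instead of the unknown $\theta$, and (29) shows that the resulting coefficient $\mathcal B(W)$ of $\tau-\theta$ exceeds $5/8$.

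\textbf{Your variational input is also too weak.} Weight optimality on a fixed $H$ gives only vertex conditions. The active-density bound (Lemma~3.2 and Proposition~3.3) uses the second-order condition $G\succeq0$, which a template does supply. It also uses $M\ge B\ge0$, which comes from the edge variations (7), and a template does not supply that. The certificate (27) further uses the profile-insertion column $J_{\mathrm{del}}$ with a strictly positive multiplier. No finite template is known a priori to attain the supremum. The paper therefore obtains a global maximizer in graphon space by compactness (Lemma~2.2) and derives the edge and new-profile conditions there. Your ``I expect them to yield an active-density bound'' lacks exactly these ingredients.

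\textbf{The equality outline is close to the paper's, with two corrections.} First, equal class weights follow from constant weighted degree: $Ay=d\mathbf 1$, and the Clebsch adjacency matrix is invertible (eigenvalues $5,1,-3$). Negative definiteness of the Hessian on sum-zero vectors gives only a strict local maximum. Second, triangles are not excluded by positive slack at each triangle. Lemma~4.3 only kills triangle-containing path-type flags at almost every induced-$P_3$ root triple, giving $\int rT=0$ in the notation of Section~6. One then needs $L(x)=\kappa d(x)$ and $L(x)\le r(x)/2$ to conclude $\int dT=0$.
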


Write \(I_5(G)\) for the number of induced pentagons in an ordinary graph.
A graph with \(m>0\) edges satisfies
\[
 I_5(G)\le R_*\left(\frac m5\right)^{5/2}.
\]
Equality holds exactly for a Clebsch blow-up with the same positive integer
number \(k\) of vertices in every class, together with isolates. Such a
graph has \(40k^2\) edges and \(192k^5\) induced pentagons.

\begin{corollary}[Asymptotic edge inducibility]\label{cor:asymptotic}
If \(\rho(C_5,m)\) is the largest number of induced pentagons in a graph
with exactly \(m\) edges, then
\[
 \rho(C_5,m)=c_*m^{5/2}+O(m^2).
\]
\end{corollary}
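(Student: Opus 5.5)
The upper bound comes directly from Theorem~\ref{thm:weighted}. Given a graph \(G\) with exactly \(m\) edges, give every vertex weight \(1\). Then \(E(G,\mathbf 1)=m\) and \(C(G,\mathbf 1)=I_5(G)\), so \eqref{eq:weighted} yields
\[
I_5(G)\le c_*m^{5/2}.
\]
Hence \(\rho(C_5,m)\le c_*m^{5/2}\) for every \(m\). The whole task is therefore a matching lower bound, \(\rho(C_5,m)\ge c_*m^{5/2}-O(m^2)\), valid for all \(m\) and not only along the equality sequence \(m=40k^2\).

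For the lower bound, I would choose \(k=\lfloor\sqrt{m/40}\,\rfloor\), so that \(40k^2\le m<40(k+1)^2\). Then
\[
r:=m-40k^2<80k+40=O(\sqrt m).
\]
Start from the balanced Clebsch blow-up \(B_k\), which has \(40k^2\) edges and \(192k^5=c_*(40k^2)^{5/2}\) induced pentagons. We must add exactly \(r\) edges without destroying many pentagons. Do this by attaching \(r\) pendant edges to new vertices, for example a disjoint matching or a star on fresh vertices. Such edges lie in no induced \(C_5\) and do not affect the pentagons of \(B_k\). The resulting graph \(G\) has exactly \(m\) edges and satisfies
\[
I_5(G)\ge 192k^5=c_*(40k^2)^{5/2}.
\]

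It remains to compare \((40k^2)^{5/2}\) with \(m^{5/2}\). By the mean value theorem,
\[
m^{5/2}-(40k^2)^{5/2}\le \tfrac52\,m^{3/2}\,r=O\bigl(m^{3/2}\sqrt m\bigr)=O(m^2).
\]
Thus \(\rho(C_5,m)\ge c_*m^{5/2}-O(m^2)\). Combined with the upper bound, this proves the corollary. Small values of \(m\), including \(m<40\) where \(k=0\), are absorbed into the implied constant, since both sides are then \(O(1)\).

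There is no serious obstacle here. The only points needing care are the following.
\begin{itemize}
\item Adding edges could in principle destroy induced pentagons, by creating chords inside existing 5-sets. Placing the extra edges on fresh vertices avoids this entirely.
\item The error term must be \(O(m^2)\) rather than something larger. This holds because the gap between consecutive squares \(40k^2\) is \(O(\sqrt m)\), and the derivative of \(m^{5/2}\) is \(O(m^{3/2})\).
\end{itemize}
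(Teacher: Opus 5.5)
Your proposal is correct and follows the paper's proof essentially verbatim: the upper bound is Theorem~\ref{thm:weighted} with unit weights, and the lower bound is the balanced Clebsch blow-up with \(k=\lfloor\sqrt{m/40}\rfloor\) plus a disjoint matching of \(m-40k^2=O(\sqrt m)\) edges. You also spell out the mean value estimate converting this \(O(\sqrt m)\) edge gap into an \(O(m^2)\) error, which the paper leaves implicit.
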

\begin{proof}
The upper bound is Theorem~1.1. For the lower bound, take a balanced
Clebsch blow-up with \(k=\lfloor\sqrt{m/40}\rfloor\) and add a disjoint
matching with \(m-40k^2\) edges. It has exactly \(m\) edges and \(192k^5\)
induced pentagons. Since \(m-40k^2=O(\sqrt m)\), the asserted error follows.
\end{proof}

In the normalization of~\cite{Chao2025}, our result reads
\[
 \operatorname{eind}(C_5)
 =\limsup_{m\to\infty}\frac{10\rho(C_5,m)}{(2m)^{5/2}}
 =\frac{3}{40\sqrt5}.
\]
The upper bound in \cite[Theorem~1.8]{Chao2025} is \(1/\sqrt{250}\) in this
normalization. Thus the sharp bound also settles the pentagon case
identified as open in that paper.

For a graphon \(W:[0,1]^2\to[0,1]\), use the normalization
\[
 \begin{gathered}
 E(W)=\frac12t(K_2,W),\qquad
 C(W)=\frac1{10}t_{\mathrm{ind}}(C_5,W),\\
 R(W)=5^{5/2}\frac{C(W)}{E(W)^{5/2}}.
 \end{gathered}
\]
Let \(\mathcal E_{\mathrm{Cl}}\) be the family obtained by partitioning an
active set of arbitrary positive measure into sixteen equal classes,
placing Clebsch adjacency between them, and making all remaining points
isolated.

\begin{theorem}[Graphon bound and equality]\label{thm:graphon}
Every positive-density graphon satisfies \(R(W)\le R_*\).
Equality holds exactly for \(\mathcal E_{\mathrm{Cl}}\), up to a measurable
partition and null sets. In particular every equality graphon is
\(\{0,1\}\)-valued.
\end{theorem}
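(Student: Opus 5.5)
The plan is to deduce Theorem~\ref{thm:graphon} from Theorem~\ref{thm:weighted} by approximating with step graphons, and then to rerun the variational argument behind the equality case directly in the graphon setting. First, the normalizations match. If \(W\) is the step graphon of a weighted graph \((H,x)\) with \(\sum x_v\le 1\) and \(\{0,1\}\) values, then \(E(W)=E(H,x)\) and \(C(W)=C(H,x)\): the factor \(1/2\) and the factor \(1/10\) (the automorphism count of \(C_5\)) convert labelled homomorphism densities to unlabelled weighted counts. Hence \(R(W)\le R_*\) holds for every \(\{0,1\}\)-valued step graphon.

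For a fractional step graphon with values \(p_{ij}\in[0,1]\), we use that \(C(W)\) and \(E(W)\) are both multilinear in the entries \(p_{ij}\). Split each class \(i\) into many subclasses, and let the pair of subclasses \((a,b)\) be adjacent with probability \(p_{ij}\). This gives a random \(\{0,1\}\)-valued step graphon \(W_n\) with \(t(K_2,W_n)\to t(K_2,W)\) and \(t_{\mathrm{ind}}(C_5,W_n)\to t_{\mathrm{ind}}(C_5,W)\) almost surely, by standard sampling concentration. So \(R(W)\le R_*\) for all step graphons. A general \(W\) is the \(L^1\) (hence cut-norm) limit of its step approximations. Both \(t(K_2,\cdot)\) and \(t_{\mathrm{ind}}(C_5,\cdot)\) are cut-continuous, and \(E(W)>0\), so the inequality passes to the limit.

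For equality, suppose \(R(W)=R_*\). We first derive first-order conditions. Perturb \(W\mapsto W+\delta U\) with \(U\) supported where \(0<W<1\), or with the sign chosen to stay in \([0,1]\), and also perturb the measure. Stationarity of \(\log C-\tfrac52\log E\) then gives the following. The edge-rooted pentagon-minus-edge density \(\partial C/\partial W(x,y)\) equals \(\lambda:=\tfrac52 C/E\) almost everywhere on \(\{0<W<1\}\). It is \(\ge\lambda\) where \(W=1\) and \(\le\lambda\) where \(W=0\). For the vertex version, every active point \(x\) (one with positive degree) has rooted pentagon density \(5C/E\) times its degree share, up to the \(5/2\)-homogeneity constant. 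These are the graphon analogues of the global variational conditions used for Theorem~\ref{thm:weighted}.

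The main obstacle is excluding fractional values and transferring the rigidity: a limit of near-extremizers need not be a step function a priori. To handle this, we sample: a \(W\)-random weighted graph on \(n\) points with fractional edges resolved independently still has ratio near \(R_*\). Instead of a compactness argument, I would first try to transcribe the paper's finite certificate into a graphon statement. The order-seven flag-algebra certificate is an identity \(R_*E^{5/2}-C=\) (sum of squares) \(+\) (nonnegative combinations of densities of 7-vertex graphs), after the positive-slope linearization at the reference density. It holds verbatim for graphons, since it is a linear identity among homomorphism densities. Equality then forces every 7-vertex graph with a positive slack coefficient to have zero induced density in \(W\). In particular, the conditional span argument forces \(t(K_3,W)=0\), and the sum-of-squares terms force the degree and codegree identities to hold almost everywhere.

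Triangle-freeness together with the stationarity condition \(\partial C/\partial W=\lambda\) on \(\{0<W<1\}\) should force \(W\in\{0,1\}\). Indeed, if \(W\) takes a fractional value, then the rooted pentagon density at such a pair is positive, which forces positive \(P_3\)-type densities through that pair. The certificate identity then yields a positive triangle or zero-coefficient-violating density, a contradiction. Then \(W\) is a triangle-free \(\{0,1\}\) graphon whose degree and codegree functions are constant on the active set. The weighted structural argument applies pointwise: twin classes via codegree equalities give a finite partition up to null sets, reducing to Theorem~\ref{thm:weighted}. That theorem yields sixteen equal-weight Clebsch classes with all other points isolated, which is exactly \(\mathcal E_{\mathrm{Cl}}\).
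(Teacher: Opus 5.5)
\textbf{Inequality.} This half is fine. It is essentially the sampling step of Lemma~2.2; in the paper the weighted and graphon inequalities are proved together through their common supremum.

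\textbf{The certificate cannot be applied to an arbitrary equality graphon.} The equality half has a genuine gap where you invoke the certificate. Identity (27) is not an unconditional sum-of-squares representation of $R_*E^{5/2}-C$ valid for every graphon. It holds only at edge density exactly $p_*=5/144$, since its $34$ fixed-density columns vanish only there. Its left side also contains the variational columns $\Phi_j(\tau)$, $\Phi_\triangle(\tau)$ and $J_{\mathrm{del}}(\tau)$, which have no sign in general. They become $0$, $0$ and $\ge0$ only at a global maximizer with $\theta=\tau$, via (9) and profile insertion (Lemma~2.4), as in Corollary~4.2.

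An equality graphon is a global maximizer once the bound is known, but its density is arbitrary. Padding by isolates (Lemma~2.5) can only \emph{lower} the density. You therefore need the active density of every global maximizer to exceed $p_*$. That is Proposition~3.3, which uses the second-order condition $G\succeq0$ through the moment inequality (16). The first-order link bound (3) alone gives only $p_{\mathrm{act}}\ge9/500<5/144$. Without this step, no square in (27) is known to vanish for your $W$.

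\textbf{Later steps.} Even granting that the squares vanish, three steps fail as written.
\begin{itemize}
\item \emph{Triangles.} The triangle-domination span only annihilates triangle flags on the path-type measure $W(x_0,x_1)W(x_1,x_2)(1-W(x_0,x_2))$, which gives $\int r\,T=0$. A disjoint clique component is invisible to this measure. The paper closes the step with $L\le r/2$ and $L=\kappa d$, so $r\ge2\kappa d$ and hence $\int d\,T=0$.
\item \emph{$\{0,1\}$-values.} Your route does not work. On $\{0<W<1\}$, triangle-freeness forces codegree $0$, hence $B=0$, so stationarity merely says $A=\kappa$ there. That gives no contradiction, and you name no square or host that would. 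The actual mechanism is the codegree span (Lemma~4.5) on the type with edge $01$ and isolated root $2$, whose weight carries the factor $1-W(x_0,x_2)$. Integrating out $x_1$ on the normalized active part gives $(1-U)(p_0-c_U)(c_U-\mu)^2=0$ with $p_0=5/16$ and $\mu=1/8$. This is incompatible with $c_U=0$ on $\{U>0\}$ wherever $0<U<1$.
\item \emph{Final classification.} Handing this step to the equality case of Theorem~1.1 is circular, since the paper derives that case from Theorem~1.3. You must do the twin-class analysis directly. Twin classes have measure $a$ with $p_0^2=ap_0+(1-p_0-a)\mu$, so $a=1/16$. The kernel is then constant on class rectangles, and the $(16,5,0,2)$ quotient must be identified as the Clebsch graph.
\end{itemize}
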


The stability statement is on the edge scale. If
\(I_5(G)\ge (R_*-\varepsilon)(m/5)^{5/2}\), then
\(O(\sqrt{\varepsilon}\,m)\) edge additions and deletions suffice to obtain
an exactly balanced Clebsch blow-up and isolates on the original vertex
set. Theorem~8.1 states this precisely. There is no bound on the original
number of vertices and no additional rounding error. The exponent \(1/2\)
cannot be improved.

Our stronger structural conclusion distinguishes adjacency errors from
class imbalance. A graph with normalized deficit \(\varepsilon\) can be
edited into a possibly unbalanced Clebsch blow-up with integer class sizes
\(n_i\), total active size \(N\), and
\[
 |E(G)\mathbin{\triangle}E(H)|
       +\sum_{i=1}^{16}(n_i-N/16)^2=O(\varepsilon m).
\]
There is no additive rounding loss. We also prove a two-sided comparison
between the pentagon deficit and this structural quantity; see Section~9.

The integer edge count produces a nonconstant second-order term. Writing
\[
 \alpha_m=\left\{\sqrt{\frac{32m}{5}}\right\},\qquad
 f(\alpha)=\frac{(1-\alpha)\min\{3\alpha,1\}}{80},
\]
Theorem~12.2 gives the uniform expansion
\begin{equation}
 \rho(C_5,m)=c_*m^{5/2}-m^2f(\alpha_m)+O(m^{15/8}).\tag{2}
\end{equation}
In particular,
\[
 \limsup_{m\to\infty}\frac{c_*m^{5/2}-\rho(C_5,m)}{m^2}
 =\frac1{120}.
\]
The phase function comes from two different constructions: a partial clone
of a balanced template, and an integer rounding of a perturbation in the
negative eigenspace of the Clebsch matrix. The lower bound reduces every
extremal graph to the same scalar profile problem. Theorem~10.1 additionally
determines the order of the deficit uniformly at phases approaching zero,
where the absolute remainder in (2) need not be smaller than the displayed
second-order term. At every fixed integer offset \(m=40k^2+j\),
Theorem~13.1 gives the finer expansion
\(c_*m^{5/2}-\rho(C_5,m)=\Lambda(j)k^3+O_j(k^2)\), where \(\Lambda(j)\) is
an explicit finite lattice optimization. Theorem~14.1 then gives an
eventually exact integer coefficient polynomial for every fixed offset,
together with a finite procedure determining all eventual extremal
constructions. For example, Corollary~13.2 proves, for all sufficiently
large \(k\),
\[
 \rho(C_5,40k^2+1)=192k^5,\qquad
 \rho(C_5,40k^2-1)=192k^5-24k^3.
\]

Our certificate uses the flag algebra framework of Razborov~\cite{Razborov2007}.
Strong and perfect blow-up stability from flag algebra calculations has a
general fixed-order framework due to Pikhurko, Slia\v{c}an, and
Tyros~\cite{Pikhurko2019}. Bodn\'ar and Pikhurko~\cite{BodnarPikhurko2026}
combine exact flag algebra bounds with stability and integer optimization
of blow-up part sizes in several inducibility problems. Their
$(\kappa,\ell)$-edge-inducibility objective counts $\kappa$-vertex subsets
with exactly $\ell$ edges in a graph of fixed order; it is different from
prescribing the total edge count of the host graph, as done here.
The systematic study of six-vertex inducibility by Bodn\'ar, Gao,
Le\'on, Liu, Pikhurko, and Sun~\cite{BodnarEtAl2026Six} provides further
instances of sharp flag algebra bounds and perfect stability.
Thus the use of exact certificates and the reduction of an extremal
problem to integer part-size optimization are established techniques.
The estimates here are measured by $m$, allow the active mass to vary,
and control integer class variance separately from adjacency errors.
These features are retained in the arithmetic analysis, which also allows
bounded exceptional edge modifications at each fixed offset.

Automated numerical optimization and conversion to exact proofs are
available in Bodn\'ar's FlagAlgebraToolbox~\cite{Bodnar2026Toolbox}.
The computations for this paper use the separately supplied generators
and exact verifiers; FlagAlgebraToolbox is cited as a software and
methodological precedent, not as a dependency of these computations.

The proof first constructs a global graphon maximum. Global optimality
supplies vertex, edge, and new-profile variations; these are stronger than
weight optimality on one fixed template. A density bound allows every
relevant global maximum, including a fractional one, to be padded to
\(p_*=5/144\). An exact seven-vertex identity then proves the upper bound
at that density. A second exact span excludes triangles at equality.
Restrictions of the same positive-square vectors then give the degree
and codegree structure and identify the Clebsch graph. Finally, peeling,
compactness, local \(L^1\) stability, and whole-vertex rounding give the
quantitative edit bound. Keeping the linear adjacency defect and the
quadratic mass defect separate then permits the finite-edge arithmetic
analysis.

\begin{figure}[tbp]
 \centering
 \includegraphics[width=.92\textwidth]{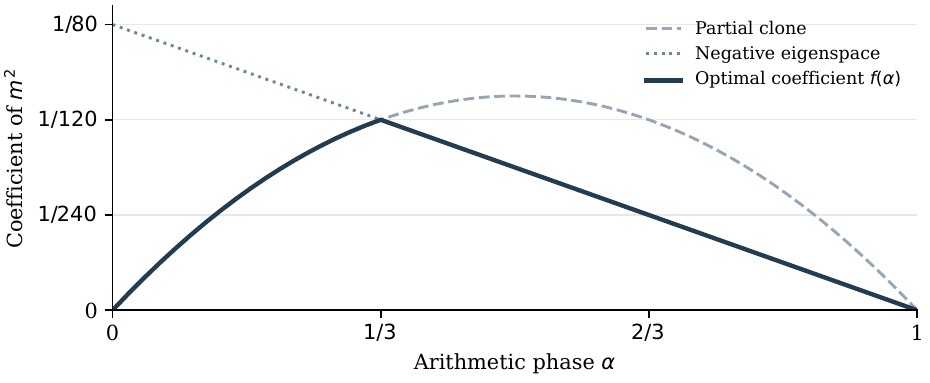}
 \caption{The second-order coefficient and the two constructions that
 attain it. Their transition is at \(\alpha=1/3\), where
 \(f(\alpha)=1/120\).}
 \label{fig:arithmetic-phase}
\end{figure}

\section{Global extremizers and their variations}

It is useful to allow a finite vertex measure \(\mu\), of total mass
\(s=\mu(\Omega)\), and a symmetric measurable kernel
\(W:\Omega^2\to[0,1]\). All densities in this section are integrated against
\(\mu\). Thus
\begin{align*}
 E&=\frac12\int W(x,y)\,d\mu(x)\,d\mu(y),\\
 C&=\frac1{10}\int
      \prod_{ij\in E(C_5)}W(x_i,x_j)
      \prod_{\substack{i<j\\ij\notin E(C_5)}}(1-W(x_i,x_j))\,d\mu^5.
\end{align*}
Multiplying \(\mu\) by a constant does not change \(C/E^{5/2}\).
Write \(d(x)=\allowbreak\int W(x,y)\,d\mu(y)\) and let \(L(x)\) be the first
derivative of \(C\) with respect to vertex mass at \(x\). Equivalently,
\(L(x)\) is one half of the induced rooted pentagon integral on four
further vertices. A rooted pentagon is always induced throughout the paper.

\subsection{Finite templates and a compact maximization problem}

\begin{lemma}[Finite-template attainment]\label{lem:finite-attainment}
For a fixed finite graph containing an induced pentagon, the supremum of
\(C(H,x)/E(H,x)^{5/2}\) over nonnegative weights with \(E(H,x)>0\) is
attained on a positive-weight induced subgraph.
\end{lemma}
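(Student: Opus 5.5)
The ratio \(F(x)=C(H,x)/E(H,x)^{5/2}\) is homogeneous of degree zero: \(C\) is homogeneous of degree five in \(x\) and \(E\) of degree two, so \(F(tx)=F(x)\) for \(t>0\). We may therefore restrict to a compact normalization. The simplex \(\{x\ge 0:\sum x_v=1\}\) is the obvious choice, but \(E\) can vanish on it, and near such points \(F\) might not be continuous. We instead normalize by the edge mass and work on \(K=\{x\ge0: E(H,x)=1\}\), which is not bounded in general: isolated vertices, or vertices in independent parts, can carry unbounded weight. The plan is to reduce to bounded weights by exploiting the structure of \(C\).

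First we show \(C\) is controlled by edge mass alone. Every induced pentagon on \(U\) satisfies
\[
 \prod_{v\in U}x_v \le \Bigl(\prod_{uv\in E(C_5)\subseteq E(H[U])} x_ux_v\Bigr)^{1/2},
\]
since each vertex of \(U\) lies on exactly two pentagon edges. By AM–GM, the right side is at most \(\bigl(\tfrac15\sum_{uv\in E(H[U])}x_ux_v\bigr)^{5/2}\le E(H,x)^{5/2}\). Summing over the finitely many pentagons gives \(F\le N_5(H)\), so the supremum \(S\) is finite. Since \(H\) contains an induced pentagon, putting weight one on its vertices and zero elsewhere shows \(S>0\).

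For attainment, take a maximizing sequence \(x^{(n)}\) with \(E=1\). Passing to a subsequence, we may assume the support pattern is constant and that, for each vertex, \(x_v^{(n)}\) converges in \([0,\infty]\). Vertices with \(x_v^{(n)}\to\infty\) must have all their neighbours' weights tending to zero, because each edge term is at most \(1\). Let \(B\) be the set of vertices with infinite limit, and let \(A\) be the set of vertices that are neighbours of \(B\) with weight tending to zero. Every pentagon monomial containing a vertex of \(B\) also contains two of its neighbours, which lie in \(A\). The key estimate is that such a monomial still tends to zero, or at least is dominated by monomials avoiding \(B\). Indeed, for \(b\in B\) with pentagon neighbours \(a,a'\), the factor \(x_ax_bx_{a'}\) is bounded by \(\sqrt{x_ax_b}\,\sqrt{x_{a'}x_b}\,\sqrt{x_ax_{a'}}\), and the last factor tends to zero. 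Here the other two factors are at most \(1\), since they come from edge terms.

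This shows that in the limit the pentagons through \(B\) contribute nothing. The vertices of \(B\) also contribute to \(E\), which is a subtlety, since dropping them could change the edge mass. We handle this by replacing \(x\) with \(x\) restricted to \(V\setminus B\). Doing so does not change \(C\) in the limit and does not increase \(E\), so the ratio does not decrease. Renormalizing the restricted vector then gives a bounded maximizing sequence, unless its edge mass tends to zero. In that case \(C\to0\) as well, contradicting \(S>0\); if needed, we bound the restricted \(C\) by \(N_5\,E_{\text{restricted}}^{5/2}\) using the AM–GM estimate above to make this precise. A bounded maximizing sequence has a convergent subsequence with limit \(x^*\), where \(E(x^*)=1\) by continuity, and continuity of \(F\) on \(\{E>0\}\) gives \(F(x^*)=S\).

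Finally, discarding the zero-weight vertices of \(x^*\) leaves the induced subgraph on \(\operatorname{supp}x^*\) carrying positive weights. Both \(C\) and \(E\) are unchanged, because deleting a zero-weight vertex removes only monomials that vanish, and any induced pentagon in the subgraph is induced in \(H\). This gives the positive-weight induced subgraph required by Lemma~\ref{lem:finite-attainment}.

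The main obstacle is the escape-to-infinity step. Both delicate points sit there: justifying that the contributions of \(B\) to \(C\) vanish, and keeping the renormalized edge mass bounded away from zero. The uniform bound \(F\le N_5(H)\) from the AM–GM estimate is the tool used to close both.
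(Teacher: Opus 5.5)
Your proposal is correct and is essentially the paper's argument. You normalize \(E=1\), extract coordinatewise limits in \([0,\infty]\), show that pentagon monomials through diverging coordinates vanish, and pass to the finite part, whose edge mass is at most one and positive while it retains the full pentagon mass. The paper finishes by observing that this edge mass must then equal one, rather than renormalizing, which is equivalent.

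One step needs tightening. Your bound \(x_ax_bx_{a'}\to0\) does not by itself make the whole monomial tend to zero, since one of the remaining two pentagon vertices may itself carry diverging weight. You also need that those two vertices are adjacent, so that their product is at most \(E=1\), exactly as the paper notes.
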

\begin{proof}
Normalize \(E=1\). The square of a pentagon monomial is the product of its
five cyclic edge weights, and is at most \(5^{-5}\) by AM--GM. There are
finitely many such monomials, so the supremum is finite. Take a maximizing
sequence and pass to a subsequence on which every vertex weight converges
in \([0,\infty]\).

If a pentagon contains a weight \(M\to\infty\), its two cyclic neighbors
have weights at most \(1/M\). Its remaining two vertices are adjacent, so
their product is at most one. The pentagon monomial is consequently at
most \(1/M\) and tends to zero. A monomial with only finite limiting
coordinates, one of them zero, also tends to zero. The finite positive
limiting coordinates therefore retain the entire limiting pentagon mass.
Their edge mass is at most one and is positive. If it were strictly less
than one, their ratio would exceed the original supremum. Hence it equals
one, proving attainment.
\end{proof}

\begin{lemma}[A global graphon maximum]\label{lem:global-maximum}
The supremum of \(R\) over all finite graphs, over all finite nonnegative
weighted graphs, and over all positive-density graphons is the same finite
number. It is attained by a graphon of edge density \(1/80\).
\end{lemma}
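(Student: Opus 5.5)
We would prove Lemma~\ref{lem:global-maximum} in three steps: identify the three suprema with one another, show the common value is finite, and then obtain a graphon attaining it at edge density exactly $1/80$.

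\textbf{Equality of the suprema.} Write $\sigma_{\mathrm{gr}}$, $\sigma_{\mathrm{wt}}$, $\sigma_{\mathrm{gph}}$ for the suprema of $R$ over finite graphs, weighted graphs and positive-density graphons. The chain $\sigma_{\mathrm{gr}}\le\sigma_{\mathrm{wt}}$ is immediate, since unit weights recover $I_5(G)$ and the edge count. For $\sigma_{\mathrm{wt}}\le\sigma_{\mathrm{gph}}$, a weighted graph $(H,x)$ is represented by the step graphon whose classes have measures proportional to $x_v$. Its $E$ and $C$ equal $E(H,x)$ and $C(H,x)$ up to a common scaling $\lambda^2$, $\lambda^5$, and this scaling cancels in $C/E^{5/2}$. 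No diagonal terms arise: in the graphon normalization, vertices landing in the same class are not adjacent, and the step graphon is $0$ on diagonal blocks. For $\sigma_{\mathrm{gph}}\le\sigma_{\mathrm{gr}}$, given a graphon $W$ we sample $W$-random graphs $G_n$. Almost surely $e(G_n)/n^2\to E(W)$ and $I_5(G_n)/n^5\to C(W)$, so $R(G_n)\to R(W)$. Fractional values cause no difficulty, because sampling converts them to edges. This gives the identity of all three suprema.

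\textbf{Finiteness and scale.} For a graphon with $E=e$, we bound $C$ by the homomorphism density of $C_5$. The standard estimate $t(C_5,W)\le t(K_2,W)^{5/2}$ (via $t(C_5)\le\sum\lambda_i^5\le(\sum\lambda_i^2)^{5/2}$ for the spectrum, using $\sum\lambda_i^2=\|W\|_2^2\le t(K_2)$) gives $C\le c\,e^{5/2}$. For scale, the ratio is invariant under replacing the measure $\mu$ by $t\mu$. Taking a near-maximizer on $[0,1]$ of edge density at least $1/80$, we can therefore shrink it onto a subinterval of measure $t$ and make the remainder isolated. This reaches edge density exactly $1/80$ without changing $R$. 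A near-maximizer with density below $1/80$ is handled the same way, since the rescaled active part then has smaller measure. Every near-maximizer thus has a representative of density exactly $1/80$.

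\textbf{Attainment.} This is the main obstacle. Take a maximizing sequence $W_n$, each with $E(W_n)=1/80$. By Lov\'asz--Szegedy compactness, a subsequence converges in cut distance to a graphon $W$. Both $t(K_2,\cdot)$ and $t_{\mathrm{ind}}(C_5,\cdot)$ are cut-continuous, so $E(W)=1/80>0$ and $C(W)=\lim C(W_n)$. Hence $R(W)=\sup R$. Fixing the density before passing to the limit is exactly what prevents the edge mass from escaping to zero. Rescaling alone does not supply this: a maximizing sequence normalized only by $R$ could have $E\to0$, and its limit would then be degenerate. By performing the rescaling before taking limits, we keep the edge density pinned at a positive value, and the limit is a genuine positive-density maximizer. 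The value $1/80$ is feasible because a balanced Clebsch blow-up on all of $[0,1]$ has density $40/256\cdot\tfrac12\cdot2/\ldots$, that is, at least $1/80$, and so can be shrunk onto a subinterval to density exactly $1/80$.
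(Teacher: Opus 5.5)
Your identification of the three suprema is sound. One small repair: in the step-graphon direction, same-class points contribute nothing to \(C\) because they would be false twins, and \(C_5\) has none; vanishing on diagonal blocks by itself only handles \(E\). Your spectral bound \(t_{\mathrm{ind}}(C_5,W)\le t(C_5,W)\le t(K_2,W)^{5/2}\) is a correct and simpler proof of finiteness than the paper's. The genuine gap is the density normalization on which your attainment step rests. Adjoining isolated mass can only \emph{lower} the edge density. A graphon whose active part, normalized to measure one, has density \(p_{\mathrm{act}}\) can be brought to any density at most \(p_{\mathrm{act}}\), never above it. So your claim that near-maximizers of density below \(1/80\) are ``handled the same way'' is false, and the fixed-density maximizing sequence you feed into compactness is not known to exist. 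This is not a technicality. Take the \(k\)-fold independent blow-up of any finite graph and add about \(k^{3/2}\) disjoint edges on new vertices. The pentagon count is unchanged, the ratio drops only by a factor \(1-O(k^{-1/2})\), and the active density is \(O(1/k)\). A maximizing sequence may consist entirely of such graphs. Then no fixed positive density can be reached, and the normalized limit has zero edge density. This is exactly the escape of edge mass you identified, and rescaling cannot prevent it.

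The missing ingredient is a uniform lower bound on the active density of suitably \emph{improved} near-maximizers. The paper starts from a finite graph with ratio at least \(R_*\) and first optimizes its vertex weights on that fixed template. Lemma~\ref{lem:finite-attainment} guarantees a positive-weight maximum, and the ratio does not decrease. At such a template maximum, remove isolates and normalize the weights to sum one, writing \(p=2E\) and \(\theta=10C\). The first variation gives \(C_i=\kappa d_i\) with \(\kappa=\theta/(2p)\). The link inequality \(C_i\le\frac14Ed_i^2\) then forces \(d_i\ge4\theta/p^2\) at every active vertex. Averaging gives \(p\ge16R^2/1000\ge9/500>1/80\). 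Only these optimized weightings are padded to density exactly \(1/80\), which shows that the supremum over density-\(1/80\) graphons equals the global supremum. Compactness at that fixed density then produces a maximizer, and sampling shows it is global. The same inequality also gives \(M\le\sqrt{1000}/4\), so the paper gets finiteness without a separate argument. Finally, the paper's edge density is \(p=t(K_2,W)=2E\). Pinning \(E=1/80\), as you do, means \(p=1/40\), which the bound \(9/500\) does not reach; pin \(p\) rather than \(E\).
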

\begin{proof}
The finite ordinary and weighted suprema agree. Indeed, rational weights
are realized by integer independent blow-ups, and an induced \(C_5\)
cannot use two vertices of one blow-up class: those vertices would be
false twins, whereas \(C_5\) has no false twins. Approximation by rational
weights gives the claim.

At a positive finite-template maximum, remove isolates and normalize the
weights to sum one. Put
\[
 p=2E,\qquad \theta=10C,\qquad
 \kappa=\frac{5C}{2E}=\frac{\theta}{2p}.
\]
The first variation is \(C_i=\kappa d_i\). The elementary universal link
inequality
\begin{equation}
 C_i\le\frac14Ed_i^2\tag{3}
\end{equation}
is proved, including its fractional version, in Lemma~3.1. Since all
active degrees are positive, it implies \(d_i\ge4\theta/p^2\). Averaging
yields
\begin{equation}
 p\ge\frac{16R^2}{1000},\qquad
 R=\sqrt{1000}\,\frac{\theta}{p^{5/2}}.\tag{4}
\end{equation}
In particular the common finite supremum \(M\) is at most
\(\sqrt{1000}/4\), and it is at least \(R_*\) by the Clebsch construction.

Every positive-density graphon has ratio at most \(M\). To see this,
sample independent latent points and then sample edges independently with
the specified probabilities. For any fixed induced pattern, the normalized
injective count converges in probability to its graphon density. The
variance is \(O(1/N)\): pairs of injections on disjoint vertex sets are
independent, and overlapping pairs form an \(O(1/N)\) fraction. Apply this
simultaneously to the edge and induced pentagon counts. The edge density
has a positive limit, so the ratios converge as well.

Conversely, every finite graph with ratio at least \(R_*\) can first be
optimized on its finite template. By (4), the resulting active probability
weighting has edge density at least \(9/500>1/80\). Rescale its active
measure and add isolates to obtain edge density \(1/80\), without changing
its ratio. Thus the supremum over graphons of this fixed density is also
\(M\).

We use the compactness of graphons modulo cut distance
\cite[Theorem~5.1]{LovaszSzegedy2007} and the continuity of fixed
homomorphism densities~\cite{LovaszSzegedy2006}. The induced pentagon
density is continuous as well: expand its five nonedge factors into a
finite alternating sum of ordinary homomorphism densities. The class of
graphons of edge density \(1/80\) is closed and compact, and the
denominator of \(R\) is fixed there. It therefore contains a maximizer.
The sampling argument shows that this is a global maximum over all
positive edge densities, not only over the fixed-density class.
\end{proof}

This globality is essential. Edge variations and insertion of arbitrary
new vertex profiles cannot be inferred merely from optimality of the
weights on a fixed finite template.

\subsection{Vertex and edge variations}

Let \(A(x,y)\) be the induced pentagon completion kernel after one cyclic
edge factor is deleted, and let \(B(x,y)\) be the completion kernel after
one chord nonedge factor is deleted. For explicit conventions, suppressing
the measure symbols,
\begin{align*}
 A(x,y)&=\int W(x,a)W(a,b)W(b,c)W(c,y)\,N_A\,da\,db\,dc,\\
 N_A&=(1-W(x,b))(1-W(x,c))(1-W(y,a))\\
    &\hspace{2em}\cdot(1-W(y,b))(1-W(a,c)),\\
 B(x,y)&=\int W(x,a)W(a,y)W(y,b)W(b,c)W(c,x)\,N_B\,da\,db\,dc,\\
 N_B&=(1-W(x,b))(1-W(a,b))(1-W(a,c))(1-W(y,c)).
\end{align*}
Both kernels are symmetric. Put \(q=A-B\).

\begin{lemma}[Necessary conditions at a global maximum]\label{lem:variations}
At a positive global graphon maximum of \(C/E^{5/2}\), define
\[
 \kappa=\frac{5C}{2E},\quad C''=WA+(1-W)B,\quad
 M=C''-\kappa W,\quad G=\frac{3\kappa}{2E}d\otimes d-M.
\]
Then
\begin{gather}
 L(x)=\kappa d(x)\qquad\text{almost everywhere},\tag{5}\\
 G\succeq0\qquad\text{as an integral operator on \(L^2(\mu)\)},\tag{6}\\
 W(q-\kappa)\ge0,\quad (1-W)(q-\kappa)\le0
       \quad\text{almost everywhere}.\tag{7}
\end{gather}
Consequently
\begin{equation}
 \begin{gathered}
 M\ge B\ge0,\qquad
 \int W(x,y)M(x,y)\,d\mu(y)=\kappa d(x)\\
 \text{almost everywhere}.
 \end{gathered}\tag{8}
\end{equation}
\end{lemma}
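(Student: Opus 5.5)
The plan is to read all four conclusions off the first and second variations of \(F=\log C-\tfrac52\log E\) at the global maximizer \(W\) furnished by Lemma~2.2. Two kinds of perturbation are available: perturbing the vertex measure \(\mu\), and perturbing the kernel \(W\) pointwise. Globality matters here: the perturbed objects are again finite-measure kernels, hence graphons after rescaling, so their ratio cannot exceed the maximum. This holds even though they need not be supported on any fixed finite template.

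For (5) and (6) I take a bounded measurable \(f\) and set \(\mu_t=(1+tf)\mu\). For \(|t|\) small this is a positive measure, and perturbing in both signs is allowed. Expanding \(C\) and \(E\), both polynomial in \(\mu\), gives
\[
\tfrac{d}{dt}C=\langle L,f\rangle,\quad \tfrac{d}{dt}E=\langle d,f\rangle,\quad \tfrac{d^2}{dt^2}C=\langle C''f,f\rangle,\quad \tfrac{d^2}{dt^2}E=\langle Wf,f\rangle .
\]
The coefficient of \(C''\) comes from the \(5\cdot4/10=2\) ordered choices of two roots out of a labelled pentagon. Each pair of roots is either a cycle edge, which contributes the factor \(W\) times the completion \(A\), or a chord, which contributes \((1-W)\) times \(B\). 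This is why \(C''=WA+(1-W)B\). I will check these normalizations carefully against the definitions of \(A\), \(B\) and \(L\); they are the only real bookkeeping in the proof.

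Stationarity for all bounded \(f\) gives \(\langle L-\kappa d,f\rangle=0\), i.e.\ (5). For the second variation, \(F''\le0\) together with \(C'=\kappa E'\) and the identities \(\kappa^2/C=5\kappa/(2E)\), \((5/2)C/E=\kappa\) reduces, after multiplying by \(C\), to
\[
\langle Mf,f\rangle\le\bigl(\tfrac{5\kappa}{2E}-\tfrac{\kappa}{E}\bigr)\langle d,f\rangle^2=\tfrac{3\kappa}{2E}\langle d,f\rangle^2 .
\]
This is \(\langle Gf,f\rangle\ge0\). Density of bounded functions in \(L^2(\mu)\), together with boundedness of the kernels, gives (6).

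For (7) I perturb \(W\mapsto W+t\varphi\) with \(\varphi\) symmetric and bounded. Differentiating the integrand factor by factor gives \(C'=\tfrac12\int\varphi q\) and \(E'=\tfrac12\int\varphi\). A pair \((x,y)\) either occupies a cycle edge, producing \(+A\), or a chord, where the factor \(1-W\) produces \(-B\). The first variation of \(F\) is therefore proportional to \(\int\varphi(q-\kappa)\). Choosing \(\varphi\ge0\) supported on \(\{W\le1-\delta\}\) shows \(q\le\kappa\) a.e.\ there, since \(W+t\varphi\) stays admissible for small \(t>0\). Letting \(\delta\to0\) gives \((1-W)(q-\kappa)\le0\). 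Symmetrically, \(\varphi\le0\) on \(\{W\ge\delta\}\) gives \(W(q-\kappa)\ge0\).

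Finally, (8) is algebra. First, \(M=W(q-\kappa)+B\ge B\ge0\). Next, (7) forces \(q=\kappa\) on \(\{0<W<1\}\), so \(W(q-\kappa)=\mathbf 1_{\{W=1\}}(q-\kappa)\). Hence \(W^2(q-\kappa)=W(q-\kappa)\) and \(WM=WA-\kappa W\). Integrating in \(y\), \(\int W(x,y)A(x,y)\,d\mu(y)\) is the full rooted pentagon integral at \(x\), which equals \(2L(x)\) by the definition of \(L\). So \(\int WM\,d\mu(y)=2L-\kappa d=\kappa d\) by (5).

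The main obstacle is the constant-chasing. I must confirm that the rooted integrals carry exactly the factors \(2\) and \(\tfrac12\) claimed, so that the edge-variation derivative has the same \(\kappa\) as the vertex variation, and that \(\int WA=2L\). Everything else is routine Taylor expansion of polynomial functionals of \((\mu,W)\).
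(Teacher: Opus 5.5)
Your proposal is correct and follows the paper's proof essentially step for step. You use vertex-measure perturbations $(1+tf)\mu$ for (5) and (6), one-sided kernel perturbations for (7), and the complementary-slackness identity $WM=WA-\kappa W$ together with $\int W(x,y)A(x,y)\,d\mu(y)=2L(x)$ for (8). The only cosmetic difference is in (7): the paper uses the directions $H=-Wf$ and $H=(1-W)f$ with $f\ge0$, which are feasible everywhere, and so it avoids your truncation to $\{W\le1-\delta\}$ and $\{W\ge\delta\}$ and the limit $\delta\to0$.
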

\begin{proof}
Replace \(d\mu\) by \((1+sh)\,d\mu\), with \(h\) bounded and \(s\) of
either sign sufficiently small. Normalizing the total measure afterward
does not change the ratio. The first derivative gives (5); the second gives
(6), first for bounded \(h\) and then for all \(h\in L^2(\mu)\) by density.
The kernel of the second vertex derivative is \(WA+(1-W)B\): the two
selected cycle vertices are adjacent in ten ordered positions and
nonadjacent in ten ordered positions, canceling the factor \(1/10\) in
\(C\).

For a bounded symmetric edge perturbation \(H\),
\[
 DC(W)[H]=\frac12\int q(x,y)H(x,y)\,d\mu^2,\qquad
 DE(W)[H]=\frac12\int H(x,y)\,d\mu^2.
\]
Use the feasible one-sided directions \(H=-Wf\) and \(H=(1-W)f\), with
\(f\ge0\). Global maximality yields (7). In particular
\(W(1-W)(q-\kappa)=0\) almost everywhere.

The identity \(M=B+W(q-\kappa)\) proves \(M\ge B\). Furthermore
\[
 \int W(x,y)A(x,y)\,d\mu(y)=2L(x).
\]
Using \(C''=WA+(1-W)B\) and the preceding complementary slackness
identity, we obtain
\begin{align*}
 \int WM
 &=\int WA-\int W(1-W)q-\kappa\int W^2\\
 &=2L-\kappa d=\kappa d.
\end{align*}
This proves (8), including on fractional parts of the graphon.
\end{proof}

On a probability space write
\[
 p=t(K_2,W),\qquad\theta=t_{\mathrm{ind}}(C_5,W).
\]
Multiplying (5) by \(d\), \(d^2\), and
\(b(x)=\int W(x,y)d(y)\,dy\), respectively, gives
\begin{equation}
 p\,t(L_j,W)=\theta\,t(B_j,W),\qquad j=1,2,3.\tag{9}
\end{equation}
Here \(L_1,L_2,L_3\) are an induced pentagon with, respectively, a leaf,
two leaves at the same root, and a pendant path of length two at that root.
Only the five original cycle vertices are required to induce a pentagon;
all other unmentioned pairs are unrestricted. The graphs \(B_1,B_2,B_3\)
are \(P_3,K_{1,3}\), and \(P_4\), with ordinary homomorphism density
conventions.

\subsection{Insertion of a new vertex profile}

For a measurable function \(f:\Omega\to[0,1]\), write \(d(f)=\int f\) and
\begin{equation}
 L(f)=\frac12\int I_{P_4}(a,b,c,d)
                 f(a)f(d)(1-f(b))(1-f(c))\,d\mu^4,\tag{10}
\end{equation}
where
\[
 I_{P_4}=W(a,b)W(b,c)W(c,d)
           (1-W(a,c))(1-W(a,d))(1-W(b,d)).
\]

\begin{lemma}[Profile insertion]\label{lem:profile-insertion}
At a positive global maximum, \(L(f)\le\kappa d(f)\) for every
measurable \(f:\Omega\to[0,1]\).
\end{lemma}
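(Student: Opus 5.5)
We insert a new atom of small mass carrying the profile \(f\) and use global maximality (Lemma~\ref{lem:global-maximum}) on the enlarged measure space. Concretely, let \(\Omega'=\Omega\sqcup\{z\}\), and give the new point \(z\) mass \(t>0\). Extend \(W\) to \(W'\) by \(W'(z,y)=W'(y,z)=f(y)\) for \(y\in\Omega\), and set \(W'(z,z)=0\). Since the measure may be an arbitrary finite measure and the ratio \(C/E^{5/2}\) is invariant under scaling, the pair \((\mu+t\delta_z,W')\) is an admissible competitor. Equivalently, we may realize it as a graphon by attaching an interval of length \(t\) on which \(W'\) is constant \(0\) internally. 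Global maximality then says that the ratio at \(t\) is at most the ratio at \(t=0\) for all small \(t>0\).

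Next we expand both quantities to first order in \(t\). For the edge density,
\[
 E(t)=E+t\,d(f)+O(t^2),
\]
since the new pairs \((z,y)\) and \((y,z)\) each contribute \(\tfrac12 t\int f\). For the pentagon density, the terms linear in \(t\) are exactly the configurations with one copy of \(z\) and four vertices \(a,b,c,d\) in \(\Omega\). Induced pentagons containing \(z\) with cyclic order \(z,a,b,c,d\) require the following:
\begin{itemize}
 \item \(a\) and \(d\) are adjacent to \(z\), giving the factor \(f(a)f(d)\);
 \item \(b\) and \(c\) are nonadjacent to \(z\), giving the factor \((1-f(b))(1-f(c))\);
 \item \(a,b,c,d\) induce a path \(P_4\) in the order \(a\!-\!b\!-\!c\!-\!d\), giving the factor \(I_{P_4}\).
\end{itemize}
Counting the labelled placements gives
\[
 C(t)=C+t\,L(f)+O(t^2).
\]
The combinatorial factor is the same one that makes \(L(x)\) equal to one half of the rooted integral. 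In (10) the root is one of the five labelled positions, and the factor \(1/10\) in \(C\) combined with \(5\) root positions gives \(\tfrac12\). One should check this normalization against the convention for \(L(x)\): for \(f=W(x,\cdot)\) the integrand reduces to the rooted pentagon integral at \(x\), so \(L(f)=L(x)\). This consistency check is the one place where care is needed. Configurations with two or more copies of \(z\) carry \(t^2\), and in fact vanish, because \(z\) has no self-loop and \(C_5\) has no false twins.

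Finally we differentiate the ratio at \(t=0^+\). The one-sided derivative of \(\log C-\tfrac52\log E\) is
\[
 \frac{L(f)}{C}-\frac52\,\frac{d(f)}{E}\le0,
\]
and multiplying by \(C\) gives \(L(f)\le \tfrac{5C}{2E}\,d(f)=\kappa d(f)\). We need \(C>0\) here, which holds at a positive maximum since the maximal ratio is at least \(R_*>0\). Only \(t\ge0\) is allowed, which is why we obtain an inequality rather than the equality (5).

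The main obstacle is to justify that the competitor is admissible within the class over which maximality was established. If maximality is only known for probability-space graphons of density \(1/80\), we renormalize: we rescale the total mass to one and pad or remove isolates. The ratio is unchanged by this, as Lemma~\ref{lem:global-maximum} notes that the maximizer is a global maximum over all positive edge densities. With that in place, the remaining expansions are routine bounded-integrand computations.
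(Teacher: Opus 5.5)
Your argument is correct and follows the paper's proof. You insert an independent class of small mass $t$ with profile $f$, observe $E(t)=E+t\,d(f)$ and $C(t)=C+t\,L(f)+O(t^2)$, where the factor $5/10=1/2$ matches (10), and then take the one-sided derivative of the ratio.

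One small correction: the terms with two copies of $z$ do not vanish in general when $f$ is fractional. Two copies in nonadjacent cycle positions leave factors such as $f(x)(1-f(x))$ at the vertices adjacent to only one of them. As the paper remarks, these terms are still $O(t^2)$, and that is all your argument uses.
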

\begin{proof}
Add a new independent class of mass \(\epsilon\), with adjacency profile
\(f\) to the old space. Before normalizing its total measure, the new masses
are
\[
 E_\epsilon=E+\epsilon d(f),\qquad
 C_\epsilon=C+\epsilon L(f)+O(\epsilon^2).
\]
The linear term selects exactly one new cycle vertex. If \(f\) is
fractional, cycles using two new vertices can occur, but their contribution
is still \(O(\epsilon^2)\). Differentiating the ratio in this feasible
direction proves the claim.
\end{proof}

In particular, put \(g=W(u,\cdot)\), \(h=W(v,\cdot)\), and \(f=g(1-h)\).
Combining the lemma with (5) gives
\begin{equation}
 J_\theta(u,v):=L(g)-L(f)
       -\frac{\theta}{2p}\bigl(d(g)-d(f)\bigr)\ge0\tag{11}
\end{equation}
on a probability space. Its removed degree \(d(g)-d(f)=\int gh\) is
nonnegative. Multiplying (11) by the nonnegative factor
\(W(u,v)W(u,z)W(v,z)\) and integrating gives the one profile condition
used in the certificate.

\begin{lemma}[Padding]\label{lem:padding}
Restrict a probability graphon to its positive-degree set and normalize
that restriction to have total measure one. If its edge density is
\(p_a>p_*\), give this normalized active graphon total measure
\(q=\sqrt{p_*/p_a}\) and adjoin isolate mass \(1-q\). The resulting graphon
has the original ratio \(R\) and edge density \(p_*\). If the original
graphon is a global ratio maximum, so is the padded one, and all the
preceding necessary conditions apply.
\end{lemma}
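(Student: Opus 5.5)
The plan is to check three things directly: the padded graphon has the same ratio, it has edge density exactly \(p_*\), and it is again a global maximum. After that, the necessary conditions of Lemmas~\ref{lem:variations} and~\ref{lem:profile-insertion} apply because they only require global optimality.

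\emph{Deleting zero-degree points.} Let \(Z\) be the zero-degree set. For almost every \(x\in Z\) we have \(W(x,\cdot)=0\) almost everywhere. Such a point contributes nothing to \(E\). It also contributes nothing to \(C\), since every vertex of an induced pentagon has two cyclic neighbours, so each integrand involving it contains a vanishing factor \(W(x,x_j)\) (Fubini). Restricting to the active set \(\Omega\setminus Z\) therefore leaves \(C\) and \(E\) unchanged. Multiplying the measure by a constant does not change \(C/E^{5/2}\), as noted at the start of Section~2. The normalized active graphon therefore has the original ratio \(R\).

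\emph{Rescaling and padding.} Put the active graphon on mass \(q\) and adjoin an isolated set of mass \(1-q\). This requires \(q\le1\), which is exactly the hypothesis \(p_a>p_*\). Scaling the active measure by \(q\) multiplies \(E\) by \(q^2\) and \(C\) by \(q^5\), so \(C/E^{5/2}\) is unchanged. Isolated points again contribute nothing, by the argument above. The new edge density is \(t(K_2,\cdot)=q^2p_a=p_*\).

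\emph{Global optimality.} Lemma~\ref{lem:global-maximum} shows that the supremum of \(R\) over all positive-density graphons is a single number \(M\), with no normalization of total mass imposed. Since the padded graphon has ratio \(R=M\), it is again a global maximum on the probability space \(\Omega'\) formed by the scaled active part and the isolate mass.

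The one point needing care is that Lemmas~\ref{lem:variations} and~\ref{lem:profile-insertion} perturb the measure with \(h\) supported anywhere, including on the isolates, and perturb \(W\) on pairs involving isolates. These perturbations are legitimate because the maximality in Lemma~\ref{lem:global-maximum} is over all graphons on all finite measure spaces. Every admissible perturbation of the padded graphon is itself a graphon on a finite measure space, so it has ratio at most \(M\). Hence (5)--(11) hold for the padded graphon, with \(p=p_*\) and the same \(\kappa\)-normalization.
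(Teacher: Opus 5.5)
Your proposal is correct and follows the paper's proof. Both use the same three points: the ratio is unchanged because edge and pentagon masses scale as $q^2$ and $q^5$ (zero-degree and isolated points contribute nothing), the new density is $q^2p_a=p_*$, and global maximality holds simply because the padded ratio equals the common supremum $M$ of Lemma~\ref{lem:global-maximum}. Your extra remark, that variations supported on the isolate mass are admissible because maximality is over all finite measure spaces, spells out a step the paper leaves implicit.
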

\begin{proof}
Edge mass and pentagon mass scale by the second and fifth powers of the
active measure, respectively. Their normalized ratio is unchanged.
Globality follows from equality of the ratio with the global supremum,
rather than from any restricted variational argument on the original
template.
\end{proof}

\section{A lower bound on the active density}

We work temporarily on a finite measure space \((\Omega,\mu)\) and write
\(S=\mu(\Omega)\). The kernel \(W\) always takes values in \([0,1]\);
rescaling the vertex measure changes neither the ratio \(C/E^{5/2}\) nor
its global maximality. Throughout this section,
\[
 \begin{gathered}
 d(x)=\int_\Omega W(x,y)\,d\mu(y),\qquad
 b(x)=\int_\Omega W(x,y)d(y)\,d\mu(y),\\
 S_2=\int_\Omega d(x)^2\,d\mu(x).
 \end{gathered}
\]
All integrals below are with respect to the indicated powers of \(\mu\).
The vertex derivative of \(C\) is denoted by \(L_x\), and
\(\kappa=5C/(2E)\).

\begin{lemma}[Universal link bounds]\label{lem:universal-links}
For every \([0,1]\)-valued graphon on a finite measure space, set
\begin{align*}
 g_x(u)&=W(x,u),\\
 s_x&=\int_{\Omega^2}g_x(u)(1-g_x(v))W(u,v)\,d\mu(u)\,d\mu(v),\\
 E_{\mathrm{out}}(x)&=\frac12\int_{\Omega^2}
       (1-g_x(u))(1-g_x(v))W(u,v)\,d\mu(u)\,d\mu(v).
\end{align*}
Then
\begin{equation}
 L_x\le\frac{d(x)^2}{4}E_{\mathrm{out}}(x)
       \le\frac{E\,d(x)^2}{4},\tag{12}
\end{equation}
and
\begin{equation}
 L_x\le\frac{s_x^2}{2},\qquad
 E_{\mathrm{out}}(x)=E-\frac{b(x)+s_x}{2}.\tag{13}
\end{equation}
\end{lemma}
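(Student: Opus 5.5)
The plan is to write \(L_x\) explicitly as a rooted integral and bound it by dropping or grouping factors. Since \(L_x\) is one half of the induced rooted pentagon integral with root \(x\), labelling the cycle \(x,a,b,c,d\) gives
\[
 L_x=\frac12\int g_x(a)g_x(d)\bigl(1-g_x(b)\bigr)\bigl(1-g_x(c)\bigr)
 I_{P_4}(a,b,c,d)\,d\mu^4,
\]
with \(I_{P_4}\) as in (10). This is just \(L(f)\) of (10) with \(f=g_x\). Every factor lies in \([0,1]\), so any factor may be dropped to get an upper bound. The three claims then follow from elementary manipulations.

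For the first inequality in (12), fix \(b,c\) and keep the outer weight \(W(b,c)(1-g_x(b))(1-g_x(c))\). Drop the factor \(1-W(a,d)\). The inner \((a,d)\)-integral then factors as \(\varphi(b,c)\psi(b,c)\), where
\[
 \varphi=\int g_x(a)W(a,b)\bigl(1-W(a,c)\bigr)\,d\mu(a),\qquad
 \psi=\int g_x(d)W(d,c)\bigl(1-W(d,b)\bigr)\,d\mu(d).
\]
For \(u,v\in[0,1]\) we have \(u(1-v)+v(1-u)\le1\). Hence \(\varphi+\psi\le d(x)\), and AM--GM gives \(\varphi\psi\le d(x)^2/4\). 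Integrating the outer weight then leaves exactly \(\tfrac12\int(1-g_x(b))(1-g_x(c))W(b,c)=E_{\mathrm{out}}(x)\), so \(L_x\le \tfrac{d(x)^2}{4}E_{\mathrm{out}}(x)\). The bound \(E_{\mathrm{out}}(x)\le E\) is immediate, since \((1-g_x(u))(1-g_x(v))\le1\).

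For the first part of (13), drop instead \(W(b,c)\) and the three chord factors of \(I_{P_4}\). What remains is the product of \(\int g_x(a)W(a,b)(1-g_x(b))\) and \(\int g_x(d)W(d,c)(1-g_x(c))\). By the symmetry of \(W\) each of these equals \(s_x\), which gives \(L_x\le s_x^2/2\).

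For the identity in (13), insert \(1=g_x+(1-g_x)\) at both arguments of \(E=\tfrac12\int W(u,v)\). Expanding produces three pieces:
\begin{itemize}
 \item the \(g_xg_x\) term, \(\tfrac12\int g_x(u)g_x(v)W(u,v)\);
 \item the two cross terms, which together give \(s_x\);
 \item the \((1-g_x)(1-g_x)\) term, which is \(E_{\mathrm{out}}(x)\).
\end{itemize}
Separately, \(\int g_x(u)g_x(v)W(u,v)+s_x=\int g_x(u)d(u)=b(x)\), so the \(g_xg_x\) term equals \((b(x)-s_x)/2\). Summing the three pieces yields \(E=(b(x)+s_x)/2+E_{\mathrm{out}}(x)\).

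The only genuinely non-mechanical step is the pairing in (12). One must discard \(1-W(a,d)\) but retain the two chord factors \(1-W(a,c)\) and \(1-W(b,d)\), because these are what make \(\varphi+\psi\le d(x)\) hold pointwise, including for fractional \(W\). Everything else is bookkeeping of which factors to drop.
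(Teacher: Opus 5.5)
Your proposal is correct and follows essentially the same route as the paper. It uses the same opposite-edge pairing: the endpoint chord $1-W(a,d)$ is dropped and $u(1-v)+v(1-u)\le1$ gives $\varphi+\psi\le d(x)$. It keeps the same two factors for $L_x\le s_x^2/2$, and decomposes $E$ into the same three pieces; the paper writes this via $t_x=\tfrac12\int g_xg_xW$, with $b(x)=2t_x+s_x$ and $E_{\mathrm{out}}(x)=E-b(x)+t_x$.
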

\begin{proof}
Fix \(x\) and abbreviate \(g_x\) to \(g\). If
\[
 \Phi(a,b,c,e)=W(a,b)W(b,c)W(c,e)
       (1-W(a,c))(1-W(a,e))(1-W(b,e)),
\]
then the exact rooted link formula is
\[
 L_x=\frac12\int_{\Omega^4}
       \Phi(a,b,c,e)g(a)g(e)(1-g(b))(1-g(c)).
\]
The factor \(1/2\) identifies the two orientations of the rooted pentagon.
For a fixed opposite edge \(bc\), the two endpoint choices have masses
\begin{align*}
 U_{bc}&=\int_\Omega g(a)W(a,b)(1-W(a,c))\,d\mu(a),\\
 V_{bc}&=\int_\Omega g(a)W(a,c)(1-W(a,b))\,d\mu(a).
\end{align*}
The elementary inequality \(u(1-v)+v(1-u)\le1\) for \(u,v\in[0,1]\)
gives \(U_{bc}+V_{bc}\le d(x)\). Dropping the nonedge factor between
the two endpoints and using \(U_{bc}V_{bc}\le d(x)^2/4\) proves the first
inequality in (12). The second follows from \(E_{\mathrm{out}}(x)\le E\).

For the other bound, retain in the link integrand only
\[
 g(a)(1-g(b))W(a,b)\;g(e)(1-g(c))W(e,c).
\]
All discarded factors lie in \([0,1]\). The two remaining pairs integrate
independently, giving \(L_x\le s_x^2/2\). Finally, with
\[
 t_x=\frac12\int_{\Omega^2}g(u)g(v)W(u,v),
\]
we have \(b(x)=2t_x+s_x\) and
\(E_{\mathrm{out}}(x)=E-b(x)+t_x\). These are exactly the last identity
in (13).
\end{proof}

The next estimate uses global maximality. We recall the notation from the
variation formulas: \(A\) and \(B\) are the completions of an induced
pentagon after deleting, respectively, the root edge and the root nonedge.
Put
\[
 M=C''-\kappa W=B+W(A-B-\kappa).
\]
At a global maximizer the first-order, edge, and second-order conditions
give
\begin{equation}
 \begin{gathered}
 L_x=\kappa d(x),\qquad M(x,y)\ge B(x,y)\ge0,\\
 \int_\Omega W(x,y)M(x,y)\,d\mu(y)=\kappa d(x),\\
 G=\frac{3\kappa}{2E}d\otimes d-M\succeq0.
 \end{gathered}\tag{14}
\end{equation}
In particular, after zero-degree points are discarded, Lemma~3.1 implies
\begin{equation}
 d(x)\ge\frac{4\kappa}{E}=\frac{10C}{E^2}
       \quad\text{almost everywhere}.\tag{15}
\end{equation}

\begin{lemma}[A second-order moment inequality]\label{lem:second-order-moment}
At a global maximizer with \(C>0\), on its active finite measure space,
\begin{equation}
 S_2\ge\frac{ES}{3}+\frac{20C}{E}.\tag{16}
\end{equation}
\end{lemma}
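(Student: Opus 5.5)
The plan is to extract (16) from the positive semidefiniteness condition (6)/(14) by testing $G$ against suitable functions, combined with the exact linear identities in (14) and the universal link bounds of Lemma~3.1. All integrals are over the active finite measure space, where (15) holds.

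\emph{Step 1: the constant function.} I would first test $G$ with the constant function $1$. Since $\int d\,d\mu=2E$, the rank-one part contributes $\frac{3\kappa}{2E}(2E)^2=6\kappa E$. Since $C$ is homogeneous of degree five in the vertex measure, Euler's identity gives $\iint C''=20C$, and hence
\[
\iint M=20C-2\kappa E=6\kappa E,
\]
using $\kappa=5C/(2E)$. So the quadratic form of $G$ vanishes at $1$. Because $G\succeq0$, this forces $G1=0$, which is the pointwise identity
\[
\int M(x,y)\,d\mu(y)=3\kappa d(x)\quad\text{a.e.}
\]
Subtracting the identity $\int W M=\kappa d$ from (14) then gives $\int(1-W(x,y))M(x,y)\,d\mu(y)=2\kappa d(x)$.

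\emph{Step 2: testing with the degree function.} Next I would test $G$ with $h=d-\lambda$. By Step~1, the constant part drops out, so this is the same as testing with $d$ alone:
\[
\frac{3\kappa}{2E}S_2^2\ge\iint d(x)M(x,y)d(y).
\]
The task is then to bound the right-hand side from below by something of order $S_2\,(ES/3+8\kappa)\cdot\frac{3\kappa}{2E}$. Note that $20C/E=8\kappa$, so (16) reads $S_2\ge ES/3+8\kappa$. I would split $M=WM+(1-W)M$. On the $(1-W)$ part I would use $M\ge B\ge0$ together with the row identity $\int(1-W)M=2\kappa d$. On the $W$ part I would use $\int WM=\kappa d$. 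Where a factor $d(y)$ must be compared with $d(x)$, I would apply Cauchy--Schwarz or the degree lower bound (15), $d\ge4\kappa/E$.

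\emph{Step 3: the link bound as an alternative.} As a parallel source of the term $ES/3$, I would use the first inequality in (12) with the identity (13):
\[
\kappa d(x)=L_x\le\tfrac14 d(x)^2\bigl(E-\tfrac12(b(x)+s_x)\bigr).
\]
Dividing by $d(x)$ and integrating gives $4\kappa S\le 2E^2-\frac12\int d(b+s)$. Here $\int d\,b=\iint d(x)W(x,y)d(y)$, and $\int d\,s$ can be rewritten through $b(x)=2t_x+s_x$. Combining this with the $h=d$ inequality from Step~2 should eliminate the mixed term $\iint dWd$. What remains is an inequality between $S_2$, $ES$ and $\kappa$.

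\emph{Main obstacle.} The hard step is the lower bound on $\iint dMd$ in Step~2. The row identities control $M$ only against the constant function. Passing to the weight $d(y)$ requires a correlation inequality between $M$ and degrees, and it must be tight enough to give the exact constants $1/3$ and $20$. My first attempt would be to choose the weight $\lambda$ in $h=d-\lambda$ optimally. If that is insufficient, I would take a convex combination of the spectral inequality with the integrated link inequality of Step~3, choosing coefficients so that the unknown mixed term cancels.
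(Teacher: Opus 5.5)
There is a genuine gap: your proposal stops exactly where the content of (16) lies. Your Step 1 is correct; it even follows without positivity, since $L$ is homogeneous of degree four in the vertex measure, so $\int C''(x,y)\,d\mu(y)=4L(x)=4\kappa d(x)$ and hence $\int M(x,y)\,d\mu(y)=3\kappa d(x)$.

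Step 2, however, has no argument, and the tools you list cannot supply one. With only the row identities and (15), the best lower bound is $\iint dMd\ge\delta\iint d(x)M(x,y)=3\kappa\delta S_2$, where $\delta=4\kappa/E$. This returns only $S_2\ge 8\kappa=20C/E$, which was already immediate from $S_2\ge\delta\int d$. The term $ES/3$, which is the real content, stays out of reach. Optimizing $\lambda$ is vacuous, because you yourself showed $G\mathbf 1=0$. The link bound (12)--(13) gives an \emph{upper} bound $\tfrac12\int d(b+s)\le 2E^2-4\kappa S$; the paper uses it in Proposition 3.3 against (16), to bound $S$ from above. To cancel $\iint dWd$ between Steps 2 and 3, you would need a lower bound on $\iint dMd$ in which $\iint dWd$ enters with a negative coefficient. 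You propose none, so the convex-combination idea has nothing to work with.

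The missing idea is to test $G\succeq 0$ against the positive operator $(T-Z)^2$ rather than against a single vector. Here $T$ is the integral operator with kernel $W$ and $Z$ is multiplication by a bounded function $z$; the paper makes this rigorous through compressions to finite step spaces. After discarding $\operatorname{tr}(MZ^2)\ge 0$, this gives
\[
\operatorname{tr}(MT^2)\le \frac{3\kappa}{2E}\|b-zd\|_2^2+2\operatorname{tr}(MTZ).
\]
Here $\operatorname{tr}(MT^2)=\iint M\gamma$, with $\gamma$ the codegree kernel, and $\operatorname{tr}(MTZ)=\kappa\int zd$ by the row identity $\int WM=\kappa d$. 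The choice $z=(b-2E/3)/d$, which is bounded by (15), gives $\iint M\gamma\le 2\kappa(S_2-ES/3)$. The $ES/3$ arises from $\int (2E/3)^2\,d\mu$, that is, from the trace, which no rank-one test produces.

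For the lower bound, use $M\ge(1-W)B$ and the factorization $B\le\gamma Q$, where $Q$ completes an induced $P_4$ with its endpoint nonedge omitted. Cauchy--Schwarz applied to $10C=\iint(1-W)B$ gives $100C^2\le t_{\mathrm{ind}}(P_4,W)\iint(1-W)B\gamma$, and $t_{\mathrm{ind}}(P_4,W)\le E^2$. Combining, $100C^2\le 2\kappa E^2(S_2-ES/3)=5CE(S_2-ES/3)$, which is (16). None of these three ingredients appears in your plan: the operator-valued test, the codegree factorization, and the induced-$P_4$ bound.
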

\begin{proof}
Write
\[
 \gamma(x,y)=\int_\Omega W(x,z)W(y,z)\,d\mu(z)
\]
for codegree, and let
\[
 Q(x,y)=\int_{\Omega^2}
 W(x,a)W(a,b)W(b,y)(1-W(x,b))(1-W(a,y))\,d\mu(a)\,d\mu(b).
\]
Thus \(Q\) is the completion of an induced three-edge path with its
endpoint nonedge factor omitted. In \(B(x,y)\), delete the two nonedge
factors joining the common neighbor to the internal vertices of this path.
This gives
\[
 B(x,y)\le\gamma(x,y)Q(x,y).
\]
Define
\[
 D=\int_{\Omega^2}(1-W)Q=t_{\mathrm{ind}}(P_4,W),\qquad
 L_0=\int_{\Omega^2}(1-W)B\gamma.
\]
Since \(\int(1-W)B=10C\), Cauchy--Schwarz yields
\begin{equation}
 100C^2\le DL_0.\tag{17}
\end{equation}
Here \(B/\gamma\) may be assigned the value zero where \(\gamma=0\);
the bound \(B\le\gamma Q\) makes this convention harmless.

We also have
\begin{equation}
 D\le E^2.\tag{18}
\end{equation}
Indeed, fix the two outer edges on four independent points. Of the four
possible cross edges, the probabilities that exactly one is present sum
to at most one. Each choice, after integration, is the same induced-\(P_4\)
density \(D\). Consequently \(4D\le(\int W)^2=4E^2\). This proof applies
directly to fractional values of \(W\).

Let \(T\) be the integral operator with kernel \(W\). We now justify an
optimized diagonal test of the positive operator \(G\) in (14). Fix a
bounded real function \(z\), and choose increasing finite measurable
partitions whose conditional expectations of \(W,M,d,z\) converge in
\(L^2\), using the product partitions for the two kernels. Such partitions
exist by approximating the kernels by finite linear combinations of
measurable rectangles and the functions by simple functions, then
successively refining all the sets that occur. Let \(P_n\) denote
orthogonal projection onto the associated step functions, and set
\[
 T_n=P_nTP_n,\qquad M_n=P_nMP_n,\qquad d_n=P_nd.
\]
These are compressions of the original kernels, not derivatives recomputed
at an approximating graphon. In particular
\[
 G_n=\frac{3\kappa}{2E}d_n\otimes d_n-M_n\succeq0.
\]
Put \(z_n=P_nz\) and let \(Z_n\) be
multiplication by \(z_n\) on the finite-dimensional step space. The matrix
\((T_n-Z_n)^2\) is positive semidefinite, so
\[
 \operatorname{tr}\bigl(M_n(T_n-Z_n)^2\bigr)
       \le\frac{3\kappa}{2E}\|(T_n-Z_n)d_n\|_2^2.
\]
In the orthonormal basis \(\mathbf1_U/\sqrt{\mu(U)}\) of partition cells,
each diagonal entry of \(M_n\) is nonnegative because \(M\ge0\).
Therefore \(\operatorname{tr}(M_nZ_n^2)\ge0\), and discarding this term
gives
\begin{equation}
 \operatorname{tr}(M_nT_n^2)
 \le\frac{3\kappa}{2E}\|T_nd_n-Z_nd_n\|_2^2
                 +2\operatorname{tr}(M_nT_nZ_n).\tag{19}
\end{equation}
We explain all three limits. Conditional expectations of the bounded
kernels converge in Hilbert--Schmidt norm, so
\(\operatorname{tr}(M_nT_n^2)\to\operatorname{tr}(MT^2)\). Also
\(T_nd_n-Z_nd_n\to b-zd\) in \(L^2\): the kernels, \(d_n\), and \(z_n\)
are uniformly bounded, and the conditional expectations converge in
\(L^2\). Finally, if the same symbols denote the step kernels, the mixed
trace is
\[
 \operatorname{tr}(M_nT_nZ_n)
   =\int_{\Omega^2}z_n(x)M_n(x,y)W_n(x,y)\,d\mu(x)\,d\mu(y).
\]
The integrands converge in \(L^1\), and (14) identifies the limit as
\(\kappa\int zd\). No infinite-dimensional trace involving the arbitrary
multiplication operator \(Z^2\) is required; its nonnegative contribution
was discarded before taking a limit.

The kernel of \(T^2\) is \(\gamma\ge0\). Since \(M\ge B\),
\[
 \operatorname{tr}(MT^2)=\int_{\Omega^2}M\gamma
       \ge\int_{\Omega^2}(1-W)B\gamma=L_0.
\]
Passing to the limit in (19) gives
\begin{equation}
 L_0\le\kappa\int_\Omega
       \left[\frac{3}{2E}(b-zd)^2+2zd\right]\,d\mu.\tag{20}
\end{equation}
Choose
\[
 z(x)=\frac{b(x)-2E/3}{d(x)}.
\]
This is bounded: \(b/d\le S\), and the positive lower bound (15)
controls \(1/d\) on the active space. The choice minimizes the scalar
quadratic in (20), yielding
\[
 L_0\le\kappa\left(2\int_\Omega b\,d\mu-\frac{2ES}{3}\right)
       =2\kappa\left(S_2-\frac{ES}{3}\right).
\]
Combine this with (17) and (18), and use \(\kappa=5C/(2E)\).
The result is (16).
\end{proof}

\begin{proposition}[Active density]\label{prop:active-density}
Let \(W\) be a global maximizer of \(C/E^{5/2}\) among graphons of
positive edge density. Suppose
\[
 5^{5/2}\frac{C}{E^{5/2}}\ge\frac{3}{2\sqrt2}.
\]
After its zero-degree set is deleted and the remaining measure is
normalized to one, its edge density satisfies
\begin{equation}
 p_{\mathrm{act}}\ge
 \frac{80000}{(491\sqrt{10}-75\sqrt2)^2}>\frac5{144}.\tag{21}
\end{equation}
\end{proposition}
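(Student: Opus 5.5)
The plan is to set an upper bound on \(S_2\) against the lower bound (16) of Lemma~3.2, working on the active space normalized to \(S=1\), so that \(p_{\mathrm{act}}=2E\). The upper bound will come from pointwise use of the first-order identity \(L_x=\kappa d(x)\) in (14) together with the two universal link bounds of Lemma~3.1. Both constraints will be expressed in \(E\) alone, using the hypothesis \(R\ge R_*\), i.e.\ \(C\ge c_*\,(2E)^{5/2}\cdot(\text{normalization})\), equivalently \(C\ge \tfrac{3}{2\sqrt2}\,5^{-5/2}E^{5/2}\). The resulting scalar inequality should then force \(E\) to be bounded below. Since \(\kappa=5C/(2E)\) and \(C/E^{5/2}\) is bounded below, every term scales as a power of \(E\). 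Small \(E\) should then be incompatible because the \(20C/E\) term in (16) and the negative link terms dominate differently.

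The pointwise step goes as follows. From (12), (13) and \(L_x=\kappa d\) we get
\[
 \kappa d(x)\le \frac{d(x)^2}{4}\Bigl(E-\frac{b(x)+s_x}{2}\Bigr),
 \qquad\text{i.e.}\qquad b(x)+s_x\le 2E-\frac{8\kappa}{d(x)},
\]
and from \(L_x\le s_x^2/2\) we get \(s_x\ge\sqrt{2\kappa d(x)}\). Hence, almost everywhere on the active set,
\[
 b(x)\le 2E-\frac{8\kappa}{d(x)}-\sqrt{2\kappa\,d(x)}.
\]
Integrating and using \(\int b=S_2\) gives \(S_2\le 2E-8\kappa\int d^{-1}-\sqrt{2\kappa}\int\sqrt d\). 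For the integrals, Jensen gives \(\int d^{-1}\ge 1/\int d=1/(2E)\). For the concave term, I would use \(d\le S=1\), so \(\sqrt d\ge d\) and \(\int\sqrt d\ge 2E\). Alternatively I would keep the pointwise form and optimize over the distribution of \(d\) subject to \(\int d=2E\), \(d\ge 4\kappa/E\) from (15), and \(\int d^2=S_2\). The mixed \(\sqrt{10}\), \(\sqrt2\) shape of the constant in (21) suggests that an optimization of this kind produces the final number.

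Combining the two bounds yields
\[
 \frac{E}{3}+\frac{20C}{E}\le S_2\le 2E-\frac{4\kappa}{E}-\sqrt{2\kappa}\,(\ldots).
\]
Substituting \(\kappa=5C/(2E)\) and \(C=\lambda E^{5/2}\) with \(\lambda\ge \tfrac{3}{2\sqrt2}5^{-5/2}\), I would divide by \(E\) and obtain an inequality of the form \(a\sqrt{E}\cdot(\ldots)\le \tfrac53-(\ldots)\). This rearranges to \(\sqrt{E}\ge\) an explicit algebraic number, which is then checked to be the claimed \(80000/(491\sqrt{10}-75\sqrt2)^2\) after doubling, and compared with \(5/144\) by squaring out the surds. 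The term \(20C/E\) is \(O(E^{3/2})\) and works against us, so monotonicity in \(\lambda\) must be checked to justify substituting the extremal value \(\lambda\). I expect the main obstacle to be exactly this bookkeeping: choosing which of the three pointwise constraints to combine, and how to handle the averages of \(1/d\) and \(\sqrt d\) sharply enough. A crude Jensen step could lose too much to clear \(5/144\). If it does, I would instead minimize \(\int\bigl(8\kappa/d+\sqrt{2\kappa d}\bigr)\) subject to \(d\in[4\kappa/E,1]\) and the mean constraint, using convexity of the integrand in \(d\).
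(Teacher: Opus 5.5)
There is a genuine gap. Your pointwise steps are correct: dividing (12) by \(d(x)^2/4\) gives \(b(x)+s_x\le 2E-8\kappa/d(x)\), and with \(s_x\ge\sqrt{2\kappa d(x)}\) and \(\int b=S_2\) you get \(S_2\le 2ES-8\kappa\int d^{-1}-\sqrt{2\kappa}\int\sqrt d\). The problem is that you then use (16) and the bounds for \(\int d^{-1}\) and \(\int\sqrt d\) independently, and this decoupled scheme cannot reach \(5/144\).

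With \(S=1\) and \(C=c_*E^{5/2}\), your Jensen chain becomes
\[
 10c_*E^{-1/2}+20c_*E^{1/2}+2\sqrt{5c_*}\,E^{3/4}\le\frac53 .
\]
At \(E=5/288\), i.e.\ \(p_{\mathrm{act}}=5/144\), the left side is \(1.44+0.05+0.03\approx1.52<5/3\). So nothing is excluded, and the chain only yields \(p_{\mathrm{act}}\gtrsim0.028\).

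Your explicit fallback (mean constraint only, plus convexity) cannot repair this. At \(E=5/288\) one has \(4\kappa/E=1/40<2E\), so the constant profile \(d\equiv2E\) is admissible. Hence Jensen is already optimal for \(\int d^{-1}\), and the minimum of \(\int(8\kappa/d+\sqrt{2\kappa d})\) is at most \(4\kappa/E+\sqrt{4\kappa E}\). Since
\[
 \frac E3+20c_*E^{3/2}+\frac{4\kappa}{E}+\sqrt{4\kappa E}\approx0.0344<2E\approx0.0347,
\]
the best inequality that fallback can produce still holds at \(E=5/288\). In addition, \(8\kappa/t+\sqrt{2\kappa t}\) is concave near \(t=1\) for \(E\) near \(5/288\), so the convexity you invoke fails on part of \([4\kappa/E,1]\). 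Nor should any such optimization be expected to return the specific constant in (21). Its \(\sqrt{10}\) comes from \(c_*\), and its \(\sqrt2\) from \(2\sqrt{\kappa/S}\ge5\sqrt2\,c\) in the paper's particular chain.

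The missing idea is to couple the size of \(S_2\) to the degree distribution. Near the threshold, (16) forces \(S_2\) to exceed the Jensen value \((\int d)^2/S\) by a factor of more than five, so the degrees are very spread, while (15) bounds them below. Your passing remark about prescribing \(\int d^2=S_2\) points this way, but you did not develop it, and your fallback drops exactly that constraint.

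The paper divides (12) only by \(d(x)\) and integrates \(4\kappa\le d(x)E_{\mathrm{out}}(x)\). With \(E=1\) this gives \(8\kappa S+\int d\,b+\int d\,s\le4\). Integrating \((d(x)-\delta)(d(y)-\delta)\ge0\) against \(W\), with \(\delta=10c\), gives \(\int d\,b\ge2\delta S_2-2\delta^2\); this is where (16) enters. The remaining term is handled by \(s_x\ge\sqrt{2\kappa d}\), Jensen for \(d^{3/2}\), and the mass bound \(S\le1/(5c)\). The outcome \(S\le 3/(20c)-15c/2-3\sqrt2/8\) is decreasing in \(c\). At \(c_*\) it equals \((491\sqrt{10}-75\sqrt2)/200\), and \(p_{\mathrm{act}}=2/S^2\) gives (21).

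Within your own unweighted inequality you could instead bound \(\int d^{-1}\) using both \(\int d\) and \(\int d^2\). Dividing \((t-t_0)^2(S-t)\ge0\) on \([0,S]\) by \(t_0^2St\) gives \(1/t\ge\alpha-\beta t+\gamma t^2\) with \(\gamma=1/(t_0^2S)>0\). The extremal distribution has two atoms, one at the maximal degree \(S\). In the \(E=1\) normalization at \(S=\sqrt{288/5}\), this gives \(S_2+20c_*\int d^{-1}\ge15.88>2S\approx15.18\). That is a different argument from the one you wrote, however. It yields a different constant, and monotonicity in \(S\) and \(c\) would still have to be verified.
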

\begin{proof}
Delete zero-degree points, which changes neither \(E\) nor \(C\), and
rescale the measure so that \(E=1\). Write \(c=C\) in this normalization,
so \(c\ge c_*=3/(50\sqrt{10})\) and \(\kappa=5c/2\). By (15),
\begin{equation}
 d(x)\ge\delta:=10c,\qquad S\le\frac1{5c}.\tag{22}
\end{equation}
The second inequality uses \(\int d=2\).

Put
\[
 T_0=\int_\Omega d(x)b(x)\,d\mu(x),\qquad
 V=\int_\Omega d(x)s_x\,d\mu(x).
\]
The inequality
\(d(x)d(y)\ge\delta(d(x)+d(y))-\delta^2\), integrated against \(W(x,y)\),
and Lemma~3.2 give
\begin{equation}
 T_0\ge2\delta S_2-2\delta^2
       \ge\frac{20}{3}cS+200c^2.\tag{23}
\end{equation}
Here \(\delta\) is the fixed bound \(10c\), so no substitution is made in
a nonmonotone function of the actual minimum degree. By
\(L_x=\kappa d(x)\) and (13), \(s_x\ge\sqrt{2\kappa d(x)}\).
Jensen's inequality therefore gives
\begin{equation}
 V\ge\sqrt{2\kappa}\int_\Omega d(x)^{3/2}\,d\mu(x)
       \ge4\sqrt{\frac{\kappa}{S}}.\tag{24}
\end{equation}
Divide the first inequality in (12) by \(d(x)>0\), use
\(L_x=\kappa d(x)\), and integrate. With
\(E_{\mathrm{out}}(x)=1-(b(x)+s_x)/2\) this reads
\[
 8\kappa S+T_0+V\le4.
\]
Equations (23) and (24) now imply
\[
 \frac{40}{3}cS+100c^2+2\sqrt{\frac{\kappa}{S}}\le2.
\]
The mass bound in (22) gives \(2\sqrt{\kappa/S}\ge5\sqrt2\,c\), and hence
\[
 S\le\frac3{20c}-\frac{15}{2}c-\frac{3\sqrt2}{8}.
\]
The right-hand side is strictly decreasing for \(c>0\). Substituting
\(c_*=3/(50\sqrt{10})\) yields
\[
 S\le\frac{491\sqrt{10}-75\sqrt2}{200}.
\]
Since \(11\sqrt{10}<75\sqrt2\), this upper bound is strictly less than
\(480\sqrt{10}/200=\sqrt{288/5}\). On normalizing the active measure to
one, its edge density is \(p_{\mathrm{act}}=2/S^2\). This proves both
inequalities in (21).
\end{proof}

In particular, a global maximizer can be padded by isolated mass to the
rational density \(p_*=5/144\). The same conclusion applies to any
equality graphon once the sharp bound has been established: such a graphon
is itself a global maximizer. Thus the padding needed for the equality
analysis does not require a finite-support or a \(\{0,1\}\)-valued
assumption.

\section{A single exact certificate}

We work on a probability space and write
\[
 p=t(K_2,W),\qquad\theta=t_{\mathrm{ind}}(C_5,W),\qquad
 p_*=\frac5{144},\qquad\tau=\frac5{663552}.
\]
Balanced Clebsch on an active set of measure \(1/3\), with isolates outside
it, has \((p,\theta)=(p_*,\tau)\). The certificate in this section applies
to arbitrary graphons of density \(p_*\); its variational interpretation
is used at a global maximum.

\subsection{The additional variational columns}

Retain the partially induced patterns \(L_j\) and the graphs \(B_j\) from
(9), and abbreviate
\[
 \Phi_j(s)=p_*t(L_j,W)-s\,t(B_j,W),\qquad j=1,2,3.
\]
We retain four additional columns. We describe them explicitly to specify
their signs and their dependence on the target density. Two have zero
multipliers in the supplied certificate.

First let \(z(x)=2L(x)\) be the rooted labeled induced-pentagon density,
and put
\[
 T(x)=\int W(x,y)W(x,w)W(y,w)\,dy\,dw,
\]
\[
 L_\triangle=\int z(x)T(x)\,dx,\qquad
 B_\triangle=\int d(x)T(x)\,dx.
\]
Equivalently, \(L_\triangle\) is the partially induced pattern obtained
by attaching a triangle at one vertex of an induced pentagon, and
\(B_\triangle\) is the ordinary density of a triangle with a leaf at its
root. Define
\[
 \Phi_\triangle(s)=p_*L_\triangle-sB_\triangle.
\]
Multiplying the first-order identity by \(T(x)\) gives
\(\Phi_\triangle(\theta)=0\), and \(B_\triangle\ge0\).

For the next two columns, fix ordered roots \(u,v\), write \(g=W(u,\cdot)\)
and \(h=W(v,\cdot)\), and set
\begin{align*}
 \psi_0(u,v)&=\int g(x)h(x)g(y)h(y)(1-W(x,y))\,dx\,dy,\\
 \psi_1(u,v)&=\int g(x)h(x)g(y)h(y)W(x,y)\,dx\,dy.
\end{align*}
Conditional on the root edge being present, these are the probabilities of
a diamond with its two degree-three vertices rooted, and of a complete
four-vertex graph. Using the completion kernels \(A,B\) from Section~2,
define
\[
 J_i(s)=\int W(u,v)\psi_i(u,v)
       \left(A(u,v)-B(u,v)-\frac{s}{2p_*}\right)\,du\,dv,
       \qquad i=0,1,
\]
and \(R_i=\int W(u,v)\psi_i(u,v)\,du\,dv\ge0\).
The existing-edge condition (7) gives \(J_i(\theta)\ge0\) at a global
maximum.

Finally, let \(J_{\mathrm{del}}(s)\) be the integral of the expression
in (11), with \(\theta\) replaced by \(s\), against
\(W(u,v)W(u,w)W(v,w)\,du\,dv\,dw\).
Thus \(J_{\mathrm{del}}(\theta)\ge0\). Its removed-degree coefficient is
\[
 D_{\mathrm{del}}=\int W(u,v)c(u,v)^2\,du\,dv\ge0,\qquad
 c(u,v)=\int W(u,x)W(v,x)\,dx.
\]
In particular \(D_{\mathrm{del}}=R_0+R_1\). At fixed edge density,
\begin{equation}
 \begin{aligned}
 \Phi_j(\tau)&=(\theta-\tau)t(B_j,W),
 &\Phi_\triangle(\tau)&=(\theta-\tau)B_\triangle,\\
 J_i(\tau)&=J_i(\theta)+\frac{\theta-\tau}{2p_*}R_i,
 &J_{\mathrm{del}}(\tau)&=J_{\mathrm{del}}(\theta)
                  +\frac{\theta-\tau}{2p_*}D_{\mathrm{del}}.
 \end{aligned}\tag{25}
\end{equation}
The first line uses the vertex first-order conditions. All added
coefficients of \(\theta-\tau\) in the second line are nonnegative.

\subsection{Seven-vertex coefficients and flag squares}

There are \(1044\) unlabeled simple graphs on seven vertices. Their
canonical adjacency masks, edge lists and orbit sizes are supplied with
the certificate. Write \(\pi_H(W)\) for the probability that seven
independent graphon samples induce \(H\). For a pattern on \(r\le7\)
labeled vertices, specify its required edges and nonedges; all other
pairs are unrestricted. Its coefficient on \(H\) is the number of
injections satisfying these requirements divided by \((7)_r\).
Averaging this coefficient against \(\pi_H(W)\) gives its graphon density
exactly. In particular the induced-pentagon coefficient is \(I_5(H)/252\).

The five rooted flag spaces are as follows.
\begin{center}
\begin{tabular}{lrrrr}
\toprule
Root type & Roots & Flag order & Flags & Denominator\\
\midrule
One root & 1 & 4 & 20 & 140\\
Three independent roots & 3 & 5 & 72 & 1260\\
Edge and isolated root & 3 & 5 & 72 & 1260\\
Path \(01,12\) & 3 & 5 & 72 & 1260\\
Triangle & 3 & 5 & 72 & 1260\\
\bottomrule
\end{tabular}
\end{center}
Root labels are fixed individually. Isomorphisms may permute only the
extension vertices. For each host, sample an ordered root embedding and
two disjoint extension sets. Cross-edges between the two extensions are
unrestricted. The sampling denominators are
\(7\binom63=140\) and \((7)_3\binom42=1260\).

For a flag vector \(v\), let \(q_F(x)\) be its conditional flag
probabilities at a root tuple and let \(P_\sigma(x)\) be the probability
of the root type. The associated square is
\begin{equation}
 Q_v(W)=\int P_\sigma(x)\left(\sum_Fv_Fq_F(x)\right)^2\,dx\ge0.\tag{26}
\end{equation}
The root-type factor occurs once. This convention is essential for
fractional graphons. An individual finite host matrix need not be positive
semidefinite; its average over \(\pi_H(W)\) is the matrix represented by
(26).

For clarity, the raw source counts use the injection denominator \((7)_r\)
for a pattern on \(r\) labeled vertices. Thus the cycle and house
completions in \(J_i\), and \(L_\triangle\), use \(5040\);
\(R_i\), \(B_\triangle\), and \(D_{\mathrm{del}}\) use \(840\).
The link difference in \(J_{\mathrm{del}}\) uses
\(10080=2\cdot7!\), including the \(1/2\) in the rooted link (10).
Internal symmetries permit cancellations in some of these fractions, but
the data files retain the raw injection counts. Each expanded profile term
is multilinear in the formal graph-edge variables, so Boolean-host
averaging is valid also for fractional \(W\).

\begin{proposition}[Exact finite identity]\label{prop:finite-certificate}
For every graphon of density \(p_*\) there is the identity
\begin{equation}
 \begin{aligned}
 &\tau-\theta-\sum_{j=1}^3\lambda_j\Phi_j(\tau)
       -\lambda_\triangle\Phi_\triangle(\tau)
       -\sum_{i=0}^1a_iJ_i(\tau)-a_{\mathrm{del}}J_{\mathrm{del}}(\tau)\\
 &\hspace{3em}
       =\sum_{r=1}^{184}b_rQ_{v_r}(W)+\sum_Hs_H\pi_H(W).
 \end{aligned}\tag{27}
\end{equation}
Here \(\lambda_\triangle,a_{\mathrm{del}},b_r\) are strictly positive
rational numbers, \(a_0=a_1=0\), and \(s_H\ge0\). Exactly \(826\) host
slacks are positive. The three unrestricted first-order multipliers
satisfy the exact inequalities
\begin{equation}
 -\frac{1157}{10}<\lambda_1<0,\qquad
 \lambda_3<0,\qquad\lambda_2+\lambda_3>310.\tag{28}
\end{equation}
All rational coefficients and integer flag vectors are specified in
\texttt{\upshape all\_\allowbreak graph\_\allowbreak exact\_\allowbreak certificate.json}.
\end{proposition}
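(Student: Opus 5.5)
Identity (27) is a polynomial identity in the seven-vertex host densities \(\pi_H(W)\). The plan is to verify it by expanding every term as an explicit linear combination of the \(1044\) values \(\pi_H(W)\), and then checking coefficient by coefficient that both sides agree exactly in rational arithmetic.

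\textbf{Step 1: expansion on seven-vertex hosts.} Each ingredient on the left is a partially induced pattern density on at most seven labeled vertices, multiplied by a constant. This covers \(\theta\); the patterns \(L_j\) and \(B_j\); \(L_\triangle\) and \(B_\triangle\); the completion integrals in \(J_i\), \(R_i\) and \(J_{\mathrm{del}}\); and the constant \(\tau\), which we write as \(\tau\sum_H\pi_H\). Each such density equals \(\sum_H c_H\pi_H(W)\), where \(c_H\) is the number of injections satisfying the required edges and nonedges divided by \((7)_r\).

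Some terms carry explicit factors \(p_*\). These come from \(\Phi_j(\tau)=p_*t(L_j,W)-\tau t(B_j,W)\) and from the \(1/(2p_*)\) in \(J_i\) and \(J_{\mathrm{del}}\). Since \(p_*\) and \(\tau\) are fixed rationals, these stay linear in \(\pi\). The density constraint \(p=p_*\) is therefore needed only so that the left side has the stated meaning. If homogeneity in \(p\) were required anywhere, we would replace \(p_*\) by \(t(K_2,W)=\sum_H (e(H)/21)\pi_H\) where needed. The statement, however, fixes \(p_*\) as a constant multiplier, so we treat the expression as affine in \(\pi\) and use \(\sum_H\pi_H=1\).

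Multilinearity in the formal edge variables justifies the Boolean-host averaging for fractional \(W\). This uses the remark preceding the proposition: each expanded integrand is a polynomial of degree at most one in each pair variable \(W(x_i,x_j)\).

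\textbf{Step 2: expansion of the squares.} Each \(Q_{v_r}(W)\) from (26) expands as \(\sum_{F,F'}v_Fv_{F'}\int P_\sigma q_Fq_{F'}\). Because the two extension sets are disjoint, \(\int P_\sigma q_Fq_{F'}\) is a density of a seven-vertex partially specified configuration. Its cross pairs are unrestricted, and the root factor appears once. Hence it equals \(\sum_H \mu_H(F,F')\pi_H\). Here \(\mu_H(F,F')\) counts ordered root embeddings and extension splits of \(H\) realizing \((\sigma,F,F')\), divided by \(140\) or \(1260\) as tabulated.

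This gives, for each host \(H\), a quadratic form \(v^{\top}M_Hv\). Its \(\pi\)-average is exactly \(Q_v\), so \(Q_v\ge0\) holds as a graphon inequality even though individual \(M_H\) need not be positive semidefinite.

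\textbf{Step 3: coefficient comparison.} With the rational multipliers \(\lambda_j,\lambda_\triangle,a_{\mathrm{del}},b_r\) and the integer vectors \(v_r\) read from the certificate file, we compute, for every one of the \(1044\) hosts \(H\),
\[
 s_H:=\bigl[\text{coefficient of }\pi_H\text{ on the left}\bigr]-\sum_r b_r\,v_r^{\top}M_Hv_r .
\]
We then check in exact integer and rational arithmetic that every \(s_H\ge0\), and that exactly \(826\) of them are positive. The remaining claims are direct checks on the listed data: \(a_0=a_1=0\), the positivity of \(\lambda_\triangle,a_{\mathrm{del}},b_r\), and the inequalities (28). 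The identity (27) then holds by construction, since the \(s_H\) are defined as the residual and it only remains to see that they are nonnegative.

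\textbf{Main obstacle.} The delicate part is getting the normalizations consistent. This includes the orbit and automorphism counts; the \(1/2\) in the rooted link (10), giving the denominator \(10080\); the root-type factor appearing once in (26); and the ordered versus unordered treatment of roots for the diamond and \(K_4\) kernels \(\psi_i\). A single mismatched factor would turn correct slacks negative.

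I would guard against this in two ways. First, I would cross-check each pattern coefficient vector against direct evaluation on several explicit graphons: balanced Clebsch with isolates, \(C_5\) blow-ups, and random step graphons. Both sides of (27) are computed independently there, so a mismatch would show up. Second, I would confirm that at the Clebsch point \((p_*,\tau)\) the left side vanishes. Consequently every \(Q_{v_r}\) and every \(\pi_H\) with \(s_H>0\) must vanish there, which is a strong consistency test on the certificate data.
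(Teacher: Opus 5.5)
There is a genuine gap. You treat the hypothesis $p=p_*$ as merely interpretive (``needed only so that the left side has the stated meaning''), and you compute $s_H$ as the residual of a density-free formal identity in the $\pi_H$, using only $\sum_H\pi_H=1$. No such identity with all $s_H\ge0$ can exist. To see this, take the balanced Clebsch graphon with active mass $q=s/3$ and isolates outside it.
\begin{itemize}
\item It is triangle-free, so $\Phi_\triangle(\tau)=0$ and $J_{\mathrm{del}}(\tau)=0$; also $a_0=a_1=0$.
\item The first-order identity $p\,t(L_j,W)=\theta\,t(B_j,W)$ holds exactly on it, with $\theta=\tau s^5$ and $p=p_*s^2$. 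Hence $\Phi_j(\tau)=\tau(s^3-1)\,t(B_j,W)$.
\end{itemize}
The left side of (27) is therefore
\[
 \tau\Bigl[\,1-s^5-(s^3-1)\sum_{j=1}^3\lambda_j\,t(B_j,W)\Bigr].
\]
This vanishes at $s=1$. Its derivative there is $-\tau\bigl(5+3\sum_j\lambda_jt(B_j,W)\bigr)$. By (29), which applies because this graphon has density $p_*$, that derivative is less than $-\tfrac{31}{8}\tau$. So the left side is strictly negative for $s$ slightly above $1$. If all your residuals were nonnegative, the right side would be nonnegative for every graphon, a contradiction. Your residual check must therefore produce negative entries and cannot reproduce the $826$ positive slacks of the file. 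Your own proposed cross-check on Clebsch graphons with isolates would expose this as soon as the active mass differs from $1/3$.

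The missing ingredient is the family of $34$ unrestricted fixed-density columns that the paper's proof adds:
\[
 p_F(W)\,t(K_2,W)-p_*\,p_F(W),
\]
one for each five-vertex graph $F$. On seven-vertex hosts they are expanded with the extra edge sampled on two vertices disjoint from $F$. Together with $\lambda_1,\lambda_2,\lambda_3$ these form the $37$ unrestricted identity columns of the certificate. Their multipliers must be read from the file, and their host coefficients subtracted, before $s_H$ is defined. They vanish after averaging exactly when $t(K_2,W)=p_*$; this is where the hypothesis of the proposition is genuinely used. Your fallback of replacing $p_*$ by $t(K_2,W)$ in a few places does not supply these free multipliers. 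With this correction, the rest of your plan matches the paper's checker: host expansion, square matrices with the root-type factor counted once, and exact comparison on all $1044$ rows.
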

\begin{proof}
The supplied identity has one row for each of the \(1044\) hosts.
Besides the displayed columns, its finite form includes the \(34\)
unrestricted fixed-density identities
\(p_F(W)t(K_2,W)-p_*p_F(W)\), one for each unlabeled graph \(F\) on
five vertices, where \(p_F\) is its induced-graph probability.
Their sum is \(t(K_2,W)-p_*\), so this basis is equivalent to that
single identity together with any \(33\) of the five-vertex identities.
In the finite coefficients the additional edge is sampled disjointly
from the vertices of \(F\). These columns vanish after graphon averaging
at density \(p_*\).

The checker computes the entire host atlas, all five flag matrices,
the partially induced patterns and all the additional columns from their
edge requirements. It then computes every declared source column and
verifies the \(1044\) rational identities and every required coefficient
sign exactly. Clearing one
common denominator reduces the final identity check to integer arithmetic.
The rational comparisons in (28) are checked in the same way. Thus
averaging the finite identities proves (27); the numerical search used
to find the coefficients is not part of the verification.
\end{proof}

\subsection{A uniformly positive slope}

Put \(x=t(P_3,W)\), \(y=t(K_{1,3},W)\) and \(z=t(P_4,W)\).
For every graphon,
\[
 z=\int W(u,v)d(u)d(v)\,du\,dv
 \le\frac12\int W(u,v)\bigl(d(u)^2+d(v)^2\bigr)\,du\,dv=y,
\]
and Cauchy--Schwarz gives \(x^2\le py\). Consequently
\begin{equation}
 \begin{aligned}
 1+\lambda_1x+\lambda_2y+\lambda_3z
 &\ge1+\lambda_1x+(\lambda_2+\lambda_3)y\\
 &\ge1+\lambda_1x+\frac{\lambda_2+\lambda_3}{p_*}x^2\\
 &\ge1-\frac{p_*\lambda_1^2}{4(\lambda_2+\lambda_3)}
 >\frac{2232551}{3571200}>\frac58.
 \end{aligned}\tag{29}
\end{equation}
The penultimate comparison uses (28).

\begin{corollary}[Bound at a global maximum]\label{cor:certificate-bound}
A positive global maximum of the induced-pentagon to edge ratio,
normalized to edge density \(p_*\), satisfies \(\theta\le\tau\).
If \(\theta=\tau\), all \(184\) ordinary squares in (27) vanish.
\end{corollary}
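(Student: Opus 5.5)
Evaluate identity (27) at the normalized global maximum \(W\), which has density \(p_*\) by Lemma~\ref{lem:padding} and Proposition~\ref{prop:active-density}, so Proposition~\ref{prop:finite-certificate} applies. The right-hand side of (27) is nonnegative: every \(Q_{v_r}\ge0\) by (26), \(b_r>0\), and \(s_H\ge0\), \(\pi_H\ge0\). Next I substitute (25) into the left-hand side to express it through \(\theta-\tau\) and the variational quantities at the true \(\theta\). This gives
\[
 \tau-\theta-(\theta-\tau)\Bigl[\sum_j\lambda_jt(B_j,W)+\lambda_\triangle B_\triangle
 +\frac{a_{\mathrm{del}}}{2p_*}D_{\mathrm{del}}\Bigr]
 -a_{\mathrm{del}}J_{\mathrm{del}}(\theta)
\]
(the \(J_0,J_1\) terms drop since \(a_0=a_1=0\)). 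Writing \(\Delta=\theta-\tau\), the left side equals
\[
 -\Delta\Bigl(1+\lambda_1x+\lambda_2y+\lambda_3z+\lambda_\triangle B_\triangle+\tfrac{a_{\mathrm{del}}}{2p_*}D_{\mathrm{del}}\Bigr)-a_{\mathrm{del}}J_{\mathrm{del}}(\theta),
\]
with \(x,y,z\) as in (29).

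Globality is used at this point. By Lemma~\ref{lem:profile-insertion}, inequality (11) holds pointwise. Integrating it against the nonnegative weight \(W(u,v)W(u,w)W(v,w)\) gives \(J_{\mathrm{del}}(\theta)\ge0\), so \(-a_{\mathrm{del}}J_{\mathrm{del}}(\theta)\le0\). The bracket is at least \(1+\lambda_1x+\lambda_2y+\lambda_3z>5/8\) by (29), because \(\lambda_\triangle,a_{\mathrm{del}}>0\) and \(B_\triangle,D_{\mathrm{d el}}\ge0\). Call the bracket \(\beta\). Then (27) reads
\[
 -\beta\Delta-a_{\mathrm{del}}J_{\mathrm{del}}(\theta)=\sum_rb_rQ_{v_r}+\sum_Hs_H\pi_H\ge0,
\]
which gives \(\beta\Delta\le-a_{\mathrm{del}}J_{\mathrm{del}}(\theta)\le0\). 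Since \(\beta>0\), this yields \(\theta\le\tau\).

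For the equality case, \(\Delta=0\) makes the left side equal to \(-a_{\mathrm{del}}J_{\mathrm{del}}(\theta)\le0\), while the right side is \(\ge0\). Both sides therefore vanish. Each summand on the right is nonnegative with a positive coefficient, so every \(Q_{v_r}(W)=0\) for \(r=1,\dots,184\). As by-products, \(J_{\mathrm{del}}(\theta)=0\) and \(s_H\pi_H(W)=0\) for each host.

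The main point needing care is bookkeeping in the substitution (25). The \(\Phi_j\) and \(\Phi_\triangle\) identities at \(\tau\) rely on the first-order condition (9) and its triangle analogue at the actual \(\theta\). The sign of each added \(\Delta\)-coefficient must come out so that all of them join a single positive factor \(\beta\). No assumption on the sign of \(\Delta\) is needed, since \(\beta\) is bounded below uniformly by (29) and that bound uses only graphon inequalities.
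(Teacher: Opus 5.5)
Your proposal is correct and follows essentially the same route as the paper. Substituting (25) into (27) collects the $\theta-\tau$ terms into the factor $\mathcal B(W)>5/8$ supplied by (29), while $a_{\mathrm{del}}J_{\mathrm{del}}(\theta)\ge0$, the $184$ positively weighted squares and the host slacks are nonnegative at a global maximum; this gives $\theta\le\tau$ and, at equality, forces every $Q_{v_r}(W)=0$. The only cosmetic difference is that you discard $J_0,J_1$ at once because $a_0=a_1=0$, whereas the paper carries them along with $J_i(\theta)\ge0$ from (7).
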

\begin{proof}
Insert (25) into (27). The resulting identity is
\[
 \begin{aligned}
 (\tau-\theta)\mathcal B(W)
 &=\sum_rb_rQ_{v_r}(W)+\sum_Hs_H\pi_H(W)\\
 &\quad+\sum_{i=0}^1a_iJ_i(\theta)
              +a_{\mathrm{del}}J_{\mathrm{del}}(\theta),
 \end{aligned}
\]
where
\[
 \mathcal B(W)=1+\sum_j\lambda_jt(B_j,W)
       +\lambda_\triangle B_\triangle
       +\frac{a_0R_0+a_1R_1+a_{\mathrm{del}}D_{\mathrm{del}}}{2p_*}.
\]
The last two contributions to \(\mathcal B\) are nonnegative, so (29)
gives \(\mathcal B>5/8\). Every term on the right is nonnegative at a
global maximum. This proves the bound and, at equality, the vanishing
of each ordinary square, since all \(b_r\) are strictly positive.
\end{proof}

\subsection{Exact spans for equality}

We record three finite consequences of this same certificate. Each is an
explicit rational linear combination, checked coordinate by coordinate;
no numerical rank assertion is used.

\begin{lemma}[Triangle domination]\label{lem:triangle-domination}
In the path type with root edges \(01,12\) and extension vertices \(3,4\),
there is a vector \(G\) in the rational span of its \(53\) strictly
positive square vectors such that
\[
 G(F)=0\quad\text{if \(F\) is triangle-free},\qquad
 G(F)\ge1\quad\text{if \(F\) contains a triangle}.
\]
There are \(24\) coordinates of the first kind and \(48\) of the second.
\end{lemma}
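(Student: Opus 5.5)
This is a finite, exact statement about one flag space, so the plan is a direct computational verification, organised so that a rational witness can be checked with integer arithmetic.

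First, fix the path type \(\sigma\) with labelled roots \(0,1,2\), root edges \(01,12\) and root nonedge \(02\). Enumerate its \(72\) flags: five-vertex graphs extending \(\sigma\) by two unlabelled vertices \(3,4\). The count is \(2^3\cdot 2^3\cdot 2=128\) labelled extensions (adjacencies of \(3\) and \(4\) to the roots, plus the edge \(34\)), modulo the swap \(3\leftrightarrow4\), which gives \((128+16)/2=72\). Classify each flag by whether it contains a triangle. Since \(02\) is a nonedge, a triangle must use an extension vertex: either \(\{3,0,1\}\) or \(\{3,1,2\}\) (similarly for \(4\)), or the edge \(34\) together with a common root neighbour. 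Counting the triangle-free flags directly confirms \(24\) and hence \(48\) of the second kind. This is a routine enumeration, and it fixes the coordinate system.

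Second, extract from the certificate of Proposition~\ref{prop:finite-certificate} the \(53\) integer flag vectors \(v_r\) of this type with \(b_r>0\). Let \(V\subseteq\mathbb Q^{72}\) be their rational span. The target is a vector \(G=\sum_r c_r v_r\) with rational \(c_r\), vanishing on the \(24\) triangle-free coordinates and at least \(1\) on the other \(48\).

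To find the \(c_r\), I would solve the linear feasibility problem \(G|_{\text{tf}}=0\), \(G|_{\triangle}\ge 1\) in exact rational arithmetic. One option is to compute an exact basis of \(V\cap\{G|_{\text{tf}}=0\}\) by rational Gaussian elimination on the \(53\times 24\) restricted matrix, and then run an exact-rational LP for the inequalities. Equivalently, one can find floating-point coefficients, rationalise them, and recheck. The certificate then consists of the rational \(c_r\), together with a coordinate-by-coordinate integer check of the \(72\) entries after clearing denominators. As stated just before the lemma, no numerical rank assertion is needed.

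The main obstacle is existence rather than verification. Nothing forces the triangle indicators to be dominated within \(V\) unless the squares are rich enough. A natural first attempt is to take \(G\) to be a linear combination whose restriction to the triangle-containing flags is constant, or to solve a least-norm problem. If the LP is infeasible, the fallback is to look for a solution within the larger span of all vectors appearing with positive weight at this type. However, the lemma as stated requires only the \(53\) strictly positive squares.

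Because the vanishing of these squares at equality (Corollary~\ref{cor:certificate-bound}) forces every \(v_r\)-combination of conditional flag densities to vanish, \(G\) will later imply that triangle-containing path flags have zero density. For the lemma itself, however, only the finite linear-algebra statement is required.
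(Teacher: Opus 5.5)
Your plan is correct and matches the paper's proof, which is likewise an exact coordinate-by-coordinate check of a supplied rational combination of the $53$ positively weighted path-type square vectors. The checker first recomputes the $72$ flags and which $48$ of them contain a triangle. The one difference is that the paper's witness makes your search step and your worry about existence unnecessary: $G$ is itself one of the $53$ source vectors, namely the indicator of the $48$ triangle-containing flags, so no LP or Gaussian elimination is required.
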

\begin{proof}
The combination is specified in the supplied file
\begin{center}
 \texttt{all\_graph\_path3\_triangle\_domination\_span.json}.
\end{center}
Its checker computes which of the \(72\) flags contain a triangle,
checks that every source vector comes from a strictly positive square
of the final certificate, and adds the rational combination in every
coordinate. The result is zero on all \(24\) triangle-free coordinates
and at least one on all other coordinates. In the supplied witness,
\(G\) is itself a positively weighted source vector: it is the indicator
of the \(48\) triangle-containing flags.
\end{proof}

Once triangle-freeness has been established, only triangle-free flag
coordinates contribute to a conditional flag distribution. The following
two span identities concern the restrictions of the same all-graph square
vectors to those coordinates.

\begin{lemma}[Degree span]\label{lem:degree-span}
In the triangle-free flags of the path type \(01,12\), the vector
\begin{equation}
 g(F)=24\bigl(e_F(1,3)+e_F(1,4)\bigr)-5\tag{30}
\end{equation}
lies in the rational span of the restricted strictly positive square
vectors of that type.
\end{lemma}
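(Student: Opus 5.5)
The statement is a finite linear-algebra assertion over \(\mathbb Q\). I would prove it the same way as Lemma~4.3: exhibit an explicit rational combination and check it coordinate by coordinate.

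\emph{Step 1: fix the ambient space.} List the \(24\) triangle-free flags of the path type with root edges \(01,12\) and extension vertices \(3,4\). Since \(01,12\) are root edges, a flag \(F\) is triangle-free exactly when the root pair \(02\) is a nonedge and the extension edges create no triangle; in particular \(1\) is not adjacent to a common neighbour of \(0\) or \(2\). The quantity \(e_F(1,3)+e_F(1,4)\in\{0,1,2\}\) counts the extension vertices adjacent to the middle root. So \(g\) takes the values \(-5,19,43\) on the three corresponding groups of coordinates.

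\emph{Step 2: fix the spanning set.} Take the strictly positive square vectors \(v_r\) of this type from the certificate of Proposition~4.1 and restrict each to the \(24\) triangle-free coordinates. The restriction must be recomputed from the integer flag vectors in \texttt{all\_graph\_exact\_certificate.json}, not taken from a numerical rank.

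\emph{Step 3: produce and check the witness.} Solve \(\sum_r c_r v_r|_{\mathrm{tf}}=g\) for rational \(c_r\). Exact Gaussian elimination over \(\mathbb Q\) suffices; no sign condition is needed on the \(c_r\), since the later use is that each \(Q_{v_r}\) vanishes at equality, so any linear combination of the conditional flag vectors vanishes. Then verify the identity exactly in all \(24\) coordinates. I would store the witness in a separate JSON file and give a checker that reconstructs the flags, confirms that each source vector comes from a strictly positive \(b_r\), and sums the rational combination coordinatewise after clearing denominators.

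\emph{Main obstacle and sanity check.} The only conceptual risk is that \(g\) might not lie in the span at all. The heuristic reason it should is the following. At a triangle-free Clebsch-type extremum, the conditional probability that a random extension vertex is adjacent to the middle root, given that the roots span an induced path, equals the relevant degree ratio \(5/48\). This is the value forced by normalizing degree \(5\) against the active density, and it is consistent with \(p_*\). Hence the vector \(24(e(1,3)+e(1,4))-5\) has zero expectation, and it plausibly lies in the kernel-complement that the vanishing squares detect.

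If elimination fails on the \(53\) vectors, I would enlarge the candidate set to all strictly positive squares of the type before concluding that the claim is false. The proof itself consists only of the finite exact check.
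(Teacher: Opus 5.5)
Your proposal is correct and matches the paper's proof: the lemma is a finite exact fact, established by an explicit rational combination (supplied in \texttt{all\_graph\_tf\_degree\_codegree\_spans.json}, with $22$ nonzero coefficients among the $53$ path-type sources) and checked coordinate by coordinate on the $24$ triangle-free flags, and you are right that the combination coefficients need no sign condition. Two small corrections: the $53$ vectors are already all the strictly positive squares of this type, so there is nothing to enlarge the set to; and since $02$ is a nonedge in every flag of the type, triangle-freeness requires only that no extension vertex be adjacent to $1$ and also to $0$ or $2$, and that an edge $34$ join two vertices with no common root neighbour.
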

\begin{proof}
The witness in
\texttt{all\_graph\_tf\_degree\_codegree\_spans.json} uses \(22\) nonzero
coefficients from the \(53\) path-type sources. The independent checker
identifies the \(24\) triangle-free flags from their edge sets and verifies
(30) in all \(24\) coordinates by rational addition. All source squares
have strictly positive rational weights.
\end{proof}

\begin{lemma}[Codegree span]\label{lem:codegree-span}
Consider the triangle-free flags of the type with the single root edge
\(01\) and isolated root \(2\). Put
\[
 h_i(F)=24\bigl(e_F(i,3)+e_F(i,4)\bigr)-5,\qquad i=0,1.
\]
For each \(i=0,1\), the vector
\begin{equation}
 k_i(F)=5\sum_{j=3}^4e_F(i,j)e_F(2,j)
                 -2\sum_{j=3}^4e_F(2,j)\tag{31}
\end{equation}
lies in the rational span of \(h_0,h_1\) and the restricted strictly
positive square vectors of this type.
\end{lemma}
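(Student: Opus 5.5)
This is a finite rational computation, and I would prove it the same way Lemmas~\ref{lem:triangle-domination} and~\ref{lem:degree-span} are proved: by an explicit witness, checked coordinatewise in exact arithmetic.

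\textbf{Fixing the coordinate space.} Consider the type with root edge \(01\) and isolated root \(2\). Its flags have extension vertices \(3,4\); an edge \(34\) may or may not be present, and the fifteen pairs are otherwise unrestricted subject to the fixed root pattern. Among the \(72\) flags I first enumerate the triangle-free ones. Since \(01\) is an edge, no extension vertex may be adjacent to both \(0\) and \(1\). If \(34\) is present, vertices \(3\) and \(4\) have no common neighbour among the roots. There are \(24\) such flags, the same number as in the path type. On these \(24\) coordinates I compute the integer vectors \(h_0,h_1,k_0,k_1\) directly from the edge sets. I also compute the restrictions of the strictly positive square vectors \(v_r\) of this type from the final certificate of Proposition~\ref{prop:finite-certificate}.

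\textbf{The witness.} For each \(i\) I look for rational coefficients \(\alpha_i,\beta_i,c_{i,r}\) with
\[
 k_i=\alpha_ih_0+\beta_ih_1+\sum_r c_{i,r}v_r|_{\mathrm{tf}}
\]
in all \(24\) coordinates. The coefficients come from exact rational row reduction, that is, Gaussian elimination over \(\mathbb Q\) on the \(24\times(2+n_\sigma)\) system, where \(n_\sigma\) is the number of positive sources of this type. A numerical solver is only for locating a sparse solution, which is then rationalized. The witness is stored in the same JSON file as the degree span. The checker recomputes the triangle-free flags, confirms that each source vector comes from a square with \(b_r>0\), and adds the combination coordinate by coordinate with integer arithmetic after clearing denominators.

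\textbf{Symmetry.} Swapping roots \(0\) and \(1\) is an automorphism of the root type. It maps the square family to itself, up to relabelling the extension-vertex orbits, and it exchanges \(h_0\) with \(h_1\) and \(k_0\) with \(k_1\). So the witness for \(i=1\) can be obtained from the one for \(i=0\). I would still verify both explicitly, since the supplied squares need not be closed under this symmetry.

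\textbf{Main obstacle.} The real risk is that \(k_i\) might not lie in the span at all. That would mean the vanishing squares carry too little information about codegrees to the isolated root. Heuristically the span should exist, because at balanced Clebsch the conditional expectations satisfy the needed constraints. Each root has degree \(5/48\) relative to total mass \(1/3\), which is what the \(24(\cdot)-5\) normalization encodes. A non-neighbour of vertex \(i\) shares exactly two of its five neighbours with \(i\), giving the \(5\cdot(\text{codegree})-2\cdot(\text{degree})\) form.

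Kernel considerations also support membership: the degree identity \(h_i\) already lies in the span for the path type, and Lemma~\ref{lem:degree-span} suggests this type's squares are similarly rich. If the rank test failed, I would enlarge the allowed combination by the degree vectors \(h_i\). They are already included, and they are justified later because the degree span forces their conditional expectations to vanish. No other fallback is available within the fixed certificate. The verification itself is routine once the witness exists.
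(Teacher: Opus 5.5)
Your method is the paper's. The lemma is proved by an explicit rational combination, stored in the same witness file as the degree span. Its sources are the strictly positive squares of the edge-plus-isolated-root type (there are \(56\)) together with \(h_0,h_1\), and it is checked coordinate by coordinate in exact arithmetic.

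However, your coordinate space is wrong, and a check set up as you describe would not prove the lemma. Your triangle-free criteria are correct: no extension vertex is adjacent to both \(0\) and \(1\), and if \(34\) is an edge then \(3,4\) have no common root neighbour. But the resulting count is not \(24\), and it is not the same as for the path type. Root \(2\) is nonadjacent to both \(0\) and \(1\), so each extension vertex has six admissible root neighbourhoods: \(\varnothing,\{0\},\{1\},\{2\},\{0,2\},\{1,2\}\). In the path type \(\{1,2\}\) is also forbidden, which leaves only five. Here there are \(21\) flags without the edge \(34\) and \(11\) with it, so \(32\) triangle-free coordinates. The paper verifies both identities on all \(32\). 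A combination matched on a \(24\times(2+n_\sigma)\) system omits eight coordinates and does not establish membership in the span. (Relatedly, a five-vertex flag has seven free pairs, not fifteen.)

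Note also that the content of the lemma is the existence of the witness, and your proposal does not supply one. The vanishing of the conditional expectations \(48d(x_i)-5\) and \(10c(x_i,x_2)-4d(x_2)\) at balanced Clebsch is necessary for span membership, but not sufficient. Your fallback of adjoining \(h_0,h_1\) is vacuous, since they are already among the permitted sources. The paper settles existence by the supplied file, in which each of the two combinations has \(30\) nonzero coefficients.
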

\begin{proof}
The same witness file gives two explicit combinations, each with \(30\)
nonzero coefficients. Their available sources are the \(56\) positive
edge-type square vectors and the two degree rows \(h_0,h_1\). The checker
computes those rows and the targets (31) from the flag edge sets,
then verifies both identities in all \(32\) triangle-free coordinates.
These are consequences of the single \(1044\)-host certificate; no
separate triangle-free extremal certificate is required.
\end{proof}

\section{The sharp inequality}

\begin{proof}[Proof of the inequalities in Theorems 1.1 and 1.3]
Let \(M\) be the common supremum supplied by Lemma~2.2, and choose a
global graphon maximum attaining it. The Clebsch construction gives
\(M\ge R_*\). Proposition~3.3 shows that its active probability edge
density is strictly greater than \(p_*=5/144\). Lemma~2.5 therefore
produces a global maximum at edge density \(p_*\). All vertex, edge, and
profile conditions used in the exact certificate hold at this graphon.

At fixed density \(p_*\), a ratio at least \(R_*\) is equivalent to
\(\theta\ge\tau\). Corollary~4.2 gives \(\theta\le\tau\), so \(M=R_*\).
By the definition of \(M\), the bound holds for every positive-density
graphon and for every finite nonnegative weighted graph. If the edge mass
is zero, the induced pentagon mass is zero as well. The Clebsch
construction verifies sharpness.
\end{proof}


\section{Eliminating triangles at equality}
\label{sec:triangles}

The equality witness from Lemma 4.3 has a direct consequence for the
underlying graphon. Its use requires only the positive path-type squares and
first-order vertex variation. We work on a probability space in this section.

\begin{proposition}\label{prop:triangle-elimination}
Let $W$ be a global maximizer of $C(W)/E(W)^{5/2}$ among
graphons of positive edge density, where $C(W)=t_{\mathrm{ind}}(C_5,W)/10$
and $E(W)=t(K_2,W)/2$. If
\[
 p(W)=p_\ast:=\frac5{144},\qquad
 t_{\mathrm{ind}}(C_5,W)=\tau:=\frac5{663552},
\]
then $t(K_3,W)=0$.
\end{proposition}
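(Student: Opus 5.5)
The plan is to combine Corollary~4.2 with Lemma~4.3. Since \(W\) is a global maximizer at density \(p_*\) with \(\theta=\tau\), Corollary~4.2 shows that every one of the \(184\) strictly positively weighted squares \(Q_{v_r}(W)\) vanishes. For the path type with root edges \(01,12\), each such square has the form (26):
\[
 Q_v(W)=\int P_\sigma(x)\Bigl(\sum_F v_Fq_F(x)\Bigr)^2\,dx .
\]
Vanishing of the square gives \(\sum_F v_Fq_F(x)=0\) for almost every root triple \(x\) with \(P_\sigma(x)>0\). By linearity, \(\sum_F G(F)q_F(x)=0\) at almost every such \(x\), where \(G\) is the rational combination from Lemma~4.3. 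The same holds for any vector in the span of these square vectors.

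Next we use the sign pattern of \(G\). All conditional flag probabilities \(q_F(x)\) are nonnegative. Moreover \(G(F)=0\) on triangle-free flags and \(G(F)\ge1\) on flags containing a triangle. Hence, for almost every root triple weighted by \(P_\sigma\),
\[
 \sum_{F\ \text{contains a triangle}} q_F(x)=0 .
\]
In the supplied witness, \(G\) is itself a positive source vector, so this step is immediate.

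Finally we integrate. Averaging \(\sum_F q_F(x)\) against \(P_\sigma(x)\) over triangle-containing flags gives the density of the following event: five sampled vertices, with labeled roots \(0,1,2\) spanning the path \(01,12\) (the root nonedge \(02\) being induced) and extension vertices \(3,4\), span at least one triangle. This density is therefore zero.

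It remains to deduce \(t(K_3,W)=0\) from this. Suppose \(t(K_3,W)>0\). I would lower-bound the density above by configurations in which the extension vertices \(3,4\) together with root \(1\) form a triangle, while the roots realize an induced path at \(1\). This density is at least
\[
 \int T_1(x_1)\,\bigl(\text{induced cherry density at }x_1\bigr)\,dx_1 ,
\]
up to factors, where \(T_1(x)\) denotes the rooted triangle density at \(x\). The point to check is that vertices lying in triangles also carry induced cherries with positive density.

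To see this, use the first-order condition (5), \(L(x)=\kappa d(x)\), together with the bound \(d(x)\ge 4\kappa/E>0\) on the active set from (15). The universal link bound (13), \(L_x\le s_x^2/2\), then forces \(s_x>0\). Here \(s_x\) counts an edge from a neighbor of \(x\) to a non-neighbor of \(x\); a neighbor \(u\) of \(x\) together with a non-neighbor \(v\) of \(x\) that is adjacent to \(u\) gives an induced path \(x\,u\,v\). This yields induced paths through \(x\), with \(x\) at an end rather than at the center. Relabeling so that \(x\) plays the role of root \(0\) or \(2\), I would instead take the triangle on root \(0\) and the extensions \(3,4\). The pattern is then a path rooted at \(x\) plus a triangle at \(x\), whose density is
\[
 \int T(x)\,\bigl(\text{induced path at }x\bigr)\,dx .
\]
A cleaner route avoids products altogether: since \(L(x)>0\) almost everywhere on the active set, every active vertex lies on an induced pentagon, and induced pentagons contain induced paths with any vertex as an endpoint. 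The main obstacle is rigorously showing that
\[
 \int T(x)\,s_x\,dx>0
\]
whenever \(\int T>0\). Since \(s_x\ge\sqrt{2\kappa d(x)}\) is bounded below by a positive constant almost everywhere on the active set, and triangles live on that set, this integral is at least a positive constant times \(3t(K_3,W)\). That contradicts the vanishing established above.
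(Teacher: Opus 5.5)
Your proposal is correct and follows essentially the paper's route: the vanishing path-type squares together with Lemma~4.3 kill all triangle-containing flag mass, and the triangle on root $0$ and the two extensions gives $\int r(x)T(x)\,dx=0$ with $r(x)=s_x$. The first-order condition then forces $r$ to be positive wherever $T$ lives. The paper closes with $L(x)\le r(x)/2$ and $T\le d^2$ instead of your $L_x\le s_x^2/2$ combined with (15); also $\int T=t(K_3,W)$, so the factor $3$ in your last line is a harmless slip.
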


\begin{proof}
Write $d(x)=\int W(x,y)\,dy$ and
$\kappa=5C(W)/(2E(W))=\tau/(2p_\ast)>0$.
Let $L(x)$ denote one half of the rooted induced-pentagon density at $x$.
Bounded signed variations of the vertex measure, followed by normalization
to total mass one, give
\begin{equation}\tag{32}\label{eq:32}
 L(x)=\kappa d(x)\qquad\text{for almost every }x.
\end{equation}
Indeed, the normalization cancels from the homogeneous ratio, and its
derivative in a bounded direction $a$ is a positive constant times
$\int a(x)(L(x)-\kappa d(x))\,dx$.

At the stated values of $p$ and $t$, the verified certificate is an equality.
Every strictly positively weighted square in its path-type block therefore
vanishes. For ordered root locations $x_0,x_1,x_2$, put
\[
 \rho(x_0,x_1,x_2)=W(x_0,x_1)W(x_1,x_2)\bigl(1-W(x_0,x_2)\bigr),
\]
and let $\phi(x_0,x_1,x_2)$ be the conditional vector of the 72 five-vertex
flags of this rooted type, with the root-type factor omitted. The entries of
$\phi$ are nonnegative and sum to one. If $v_j$ is one of the positive square
vectors, then
\[
 \int\rho\,(v_j\cdot\phi)^2=0.
\]
Thus all $v_j\cdot\phi$ vanish for
$\rho(x_0,x_1,x_2)\,dx_0\,dx_1\,dx_2$-almost every root triple.
Lemma 4.3 supplies a rational linear combination $w$ of these vectors which
is zero on the triangle-free flags and at least one on every flag containing
a triangle. Consequently
\[
 0\leq\sum_{F\text{ containing a triangle}}\phi_F\leq w\cdot\phi=0
\]
on the same set of root triples.

Define
\[
 \begin{split}
 T(x)&=\int W(x,v)W(x,w)W(v,w)\,dv\,dw,\\
 r(x)&=\int W(x,y)W(y,z)\bigl(1-W(x,z)\bigr)\,dy\,dz.
 \end{split}
\]
Among the forbidden flag extensions is the event that the endpoint root
$x_0$ and the two extension vertices span a triangle. Its conditional
probability is exactly $T(x_0)$: edges from the extensions to roots 1 and 2
are unrestricted and are summed out. Hence
\begin{equation}\tag{33}\label{eq:33}
 0=\int\rho(x_0,x_1,x_2)T(x_0)\,dx_0\,dx_1\,dx_2
   =\int r(x)T(x)\,dx.
\end{equation}
In the rooted induced cycle $x-a-b-c-d-x$, retain only the factors
$W(x,a)W(a,b)(1-W(x,b))$. All other factors lie in $[0,1]$, and the two
remaining variables integrate to one. The factor $1/2$ in the definition of
$L$ therefore gives
\[
 L(x)\leq\frac{r(x)}2.
\]
Together with (32), this yields $r(x)\geq2\kappa d(x)$. Since every term
is nonnegative, (33) implies $\int d(x)T(x)\,dx=0$. Finally
$0\leq T(x)\leq d(x)^2$. Thus $T$ vanishes both where $d=0$ and where
$d>0$, up to null sets. It follows that $t(K_3,W)=\int T(x)\,dx=0$.
\end{proof}

\section{Classification of equality}
\label{sec:equality}

The preceding triangle-elimination argument reduces equality in the
unrestricted problem to a triangle-free graphon at the reference density.
We now restrict the positive-square vectors of the same certificate to
triangle-free flags and use the two exact conditional spans recorded in
Section 4.

\subsection{Structure at the reference density}

\begin{proposition}\label{prop:reference-equality}
Let $W$ be a triangle-free global ratio maximizer with
$(p,\theta)=(p_\ast,\tau)$. Then $W$ is a balanced Clebsch graphon on an
active set of measure $1/3$, with isolates outside it.
\end{proposition}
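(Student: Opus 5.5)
The plan is to turn the vanishing of the restricted square vectors into pointwise degree and codegree identities, and then recognize the Clebsch graph from these identities. All $184$ positive squares vanish by Corollary~4.2, and $W$ is triangle-free. So for $\rho$-almost every root triple of the path type, the conditional flag vector $\phi$ is supported on the $24$ triangle-free flags and is orthogonal to every restricted positive square vector of that type. By Lemma~4.4, $g\cdot\phi=0$. Since $\phi$ is a probability vector on flags with two independent extension vertices, $e_F(1,3)+e_F(1,4)$ averages to $2d(x_1)$, and we get
\[
 48\,d(x_1)-5=0,\qquad\text{i.e. } d(x_1)=\tfrac5{48},
\]
for almost every middle vertex of an induced path $x_0x_1x_2$.

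In a triangle-free graphon, every vertex with positive degree whose neighbourhood is not almost entirely contained in a single neighbourhood is the middle of an induced $P_3$. The degenerate vertices are excluded by the first-order identity $L(x)=\kappa d(x)>0$ together with (15). Hence every active vertex has degree $5/48$, so $W$ is regular on its active set. With $p_*=5/144$ this forces the active mass to be $1/3$: normalized to the active set, the degree is $5/16$ and the density is $5/16$.

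Next we apply Lemma~4.5 in the edge-plus-isolated-root type. Triangle-freeness gives $W(x_0,x_2)=0$ automatically whenever the root $x_2$ is non-adjacent. Since $h_0\cdot\phi=h_1\cdot\phi=0$ by regularity, and the squares vanish, we get $k_i\cdot\phi=0$. This reads
\[
 5\,\gamma(x_i,x_2)=2\,d(x_2)=\tfrac{10}{48},\qquad\text{so}\quad \gamma(x_i,x_2)=\tfrac1{24}
\]
whenever $x_2$ is a non-neighbour of the edge endpoint $x_i$ and some edge exists at $x_i$. Thus on the active set $W$ is a triangle-free regular graphon in which adjacent vertices have codegree $0$ and non-adjacent vertices have codegree $\lambda=1/24$. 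In the active normalization these are degree $5/16$ and codegree $1/8$. This is the strongly regular parameter set $(16,5,0,2)$ scaled by $1/16$.

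The remaining step is to show that such a graphon is a balanced blow-up of the Clebsch graph. This is where I expect the main obstacle, because graphons need not be finite a priori and may be fractional. I would argue spectrally. Let $T$ be the operator with kernel $W$ on the active set $\Omega_a$. The codegree identities give $T^2=\lambda(J-I-T)+\ldots$ in operator form; more precisely, the kernel of $T^2$ is $\gamma=\lambda(\mathbf1-W)$ off the diagonal, plus the $\{0,1\}$ defect if $W$ is fractional. Then:
\begin{enumerate}
\item First show $W\in\{0,1\}$. A fractional value $W(x,y)\in(0,1)$ must satisfy both codegree equations with weight, namely $\gamma(x,y)=\lambda(1-W(x,y))$. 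Triangle-freeness forces $W(x,y)\gamma(x,y)=0$, hence $\gamma=0$ and then $W=1$.
\item Since $T^2$ is then the sum of a finite-rank operator and $-\lambda T$, and $T$ is Hilbert--Schmidt, the spectrum of $T$ consists of $5/16\cdot|\Omega_a|$-scaled eigenvalues with finitely many distinct values, and $T$ has finite rank.
\item Finite rank combined with $\{0,1\}$ values makes $W$ a finite-type blow-up. Twin classes have positive measure, and no two classes can be false twins, by codegree considerations.
\item The quotient is then a finite triangle-free graph with constant codegree data and class weights, forced to be equal by regularity and (5). This is the $(16,5,0,2)$ Clebsch graph, by the known uniqueness of that strongly regular graph.
\end{enumerate}
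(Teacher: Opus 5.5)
Your degree step is correct, and simpler than you make it. In a triangle-free graphon the path-type weight at a middle root $x_1$ is exactly $d(x_1)^2$, so you get $\int d^2(48d-5)^2=0$ directly. There are no degenerate vertices to exclude, and (15) is not needed.

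The genuine gap is in the codegree step. The type measure of the edge-plus-isolated-root type is $W(x_0,x_1)(1-W(x_0,x_2))(1-W(x_1,x_2))$. Integrating out the other edge endpoint $x_1$ leaves the weight $(1-W(x_0,x_2))\bigl(d(x_0)-\gamma(x_0,x_2)\bigr)$ on the pair $(x_0,x_2)$. So, in the active normalization with $p_0=5/16$ and $\mu=1/8$, Lemma~\ref{lem:codegree-span} only gives
\[
 (1-W(x,z))\bigl(p_0-\gamma(x,z)\bigr)\bigl(\gamma(x,z)-\mu\bigr)^2=0\quad\text{a.e.}
\]
It says nothing about nonadjacent pairs with equal neighbourhoods, and those pairs carry positive measure. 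Your claimed structure (codegree $0$ on edges and $\mu$ on all nonedges) is in fact impossible: regularity gives $\int\gamma(x,z)\,dz=p_0^2=25/256$, while your structure gives $(1-p_0)\mu=22/256$. It also fails for the balanced Clebsch graphon itself, whose diagonal blocks have codegree $p_0$.

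Hence the identity $\gamma=\lambda(1-W)$ in your step (1) and the operator identity in step (2) are not available. In the graphon setting the analogue of the $-I$ term is the kernel of the twin relation. That kernel has finite rank only if almost every point lies in a twin class of positive measure, which is exactly what remains to be proved. So your finite-rank conclusion is unsupported.

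The repair is the paper's argument.
\begin{itemize}
\item From the displayed identity, a value $0<W(x,z)<1$ forces $\gamma(x,z)=0$ by triangle-freeness, and then $(1-W)p_0\mu^2=0$, a contradiction. So $W$ is $\{0,1\}$-valued.
\item Nonedges then have codegree $\mu$ or $p_0$, the latter exactly for $L^2$-twins, since $\|f_x-f_z\|_2^2=2p_0-2\gamma(x,z)$.
\item Integrating the codegree over $z$ gives $p_0^2=a(x)p_0+(1-p_0-a(x))\mu$ for the twin-class measure $a(x)$, so $a(x)=1/16$.
\end{itemize}
This yields sixteen classes, a kernel constant on class rectangles, and a quotient with degree $5$ and codegrees $0$ and $2$. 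The equal class weights come from this count, not from (5). After this no spectral argument is needed. The $(16,5,0,2)$ identification then finishes, either by the uniqueness result you cite or by the paper's elementary labelling.
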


\begin{proof}
By Corollary 4.2, every strictly positive square in the exact certificate
vanishes. For each such square vector $v$, formula (26) gives
$v\cdot q(\mathbf x)=0$ for almost every root tuple with respect to its
type measure.

First use the path type $01,12$. The conditional expectation of the vector
in Lemma 4.4 is $48d(x_1)-5$. Its type probability is
\[
 W(x_0,x_1)W(x_1,x_2)(1-W(x_0,x_2)).
\]
The last factor is redundant almost everywhere on the product of the first
two, by triangle-freeness. Integrating over both endpoints therefore gives
\[
 \int d(x)^2(48d(x)-5)^2\,dx=0.
\]
Thus
\begin{equation}\tag{34}\label{eq:34}
 d(x)\in\left\{0,\frac5{48}\right\}\qquad\text{almost everywhere}.
\end{equation}
Let $A=\{x:d(x)>0\}$. Its complement has zero rows almost everywhere, and
\[
 |A|=\frac{p_\ast}{5/48}=\frac13.
\]
Next consider the type consisting of the edge $01$ and isolated root $2$.
Its two edge endpoints have positive degree for almost every root tuple,
so the conditional expectations of $h_0,h_1$ in Lemma 4.5 vanish by (34).
The same lemma then implies
\begin{equation}\tag{35}\label{eq:35}
 c(x_i,x_2)=\frac25d(x_2),\qquad i=0,1,
\end{equation}
almost everywhere with respect to this type measure. Indeed the conditional
expectation of $k_i$ is $10c(x_i,x_2)-4d(x_2)$.

Restrict to $A$ and normalize its measure to one. Call the resulting
graphon $U$. Its degree is $p_0=5/16$. For type tuples entirely in $A$,
(35) gives codegree $\mu=1/8$ in the new normalization. We now show that
these relations determine $U$.

Write
\[
 c_U(x,z)=\int U(x,y)U(z,y)\,dy.
\]
Triangle-freeness gives $U(x,z)c_U(x,z)=0$ almost everywhere. Integrate
the square of $c_U(x,z)-\mu$ against the edge-plus-isolate type probability
and integrate out its middle root. Since
\[
 \int U(x,y)(1-U(y,z))\,dy=p_0-c_U(x,z),
\]
we obtain
\begin{equation}\tag{36}\label{eq:36}
 (1-U(x,z))(p_0-c_U(x,z))(c_U(x,z)-\mu)^2=0
 \quad\text{almost everywhere}.
\end{equation}
If $0<U(x,z)<1$, triangle-freeness forces $c_U(x,z)=0$, contradicting
(36). Hence $U$ is $\{0,1\}$-valued. Edges have codegree zero, and
nonedges have codegree $\mu$ or $p_0$.

Regard $f_x=U(x,\cdot)$ as an element of $L^2([0,1])$. After a null set
is discarded, all rows are $\{0,1\}$-valued and have integral $p_0$.
Therefore
\[
 \|f_x-f_z\|_2^2=2p_0-2c_U(x,z).
\]
In particular $c_U(x,z)=p_0$ precisely when the two rows agree in $L^2$.
Each twin class $T_x=\{z:f_z=f_x\}$ is measurable: it is the zero set of
the measurable function $z\mapsto\int|U(z,y)-U(x,y)|^2\,dy$.

Let $a(x)=|T_x|$. Almost every twin is a nonneighbor, and every other
nonneighbor has codegree $\mu$. Regularity gives
$\int c_U(x,z)\,dz=p_0^2$, whence
\[
 p_0^2=a(x)p_0+(1-p_0-a(x))\mu.
\]
Substituting $p_0=5/16$ and $\mu=1/8$ gives $a(x)=1/16$ for almost every
$x$. Choose generic representatives successively outside the classes already
selected. Since each selected class has measure $1/16$, exactly sixteen
disjoint measurable classes cover the space up to null sets.

The kernel is constant on every class rectangle. To see this, on a twin
class $A_i$ all rows agree in $L^2$ with one function $f_i$. Fubini and
symmetry give $U(x,z)=f_i(z)=f_j(x)$ almost everywhere on $A_i\times A_j$.
Integrating over either coordinate makes this value constant. Every block
value is zero or one, and diagonal blocks are zero because their pairs have
positive codegree. The quotient is consequently a simple graph on sixteen
vertices, of degree five, with zero common neighbors for adjacent vertices
and two for distinct nonadjacent vertices.

These parameters determine the Clebsch graph elementarily. Fix a vertex
$v$ with independent neighbors $u_1,\ldots,u_5$. Every other vertex has two
neighbors among these five. Each pair $u_i,u_j$ has exactly two common
neighbors, one being $v$; label the other $w_{ij}$. This labels the remaining
ten vertices by two-element subsets of $\{1,\ldots,5\}$. Two pair vertices
sharing a label cannot be adjacent, since that would create a triangle. A
pair vertex needs three further neighbors after $u_i,u_j$, and there are
exactly three disjoint pairs. It joins precisely those three.

Choose the five Clebsch generators $s_1,\ldots,s_5$ from the introduction.
The assignment
\[
 v\mapsto0,\qquad u_i\mapsto s_i,\qquad w_{ij}\mapsto s_i+s_j
\]
is an isomorphism to the Clebsch graph: two pair sums are adjacent exactly
when their pairs are disjoint. Thus $U$ is balanced Clebsch, proving the
proposition.
\end{proof}

\subsection{Arbitrary equality graphons}

\begin{proof}[Equality in Theorem 1.3]
Every member of $\mathcal E_{\mathrm{Cl}}$ attains the bound by the Clebsch
counts and homogeneity. Conversely, suppose $R(W)=R_\ast$ and $E(W)>0$.
The sharp inequality has already been proved for all graphons, so $W$ is
a global maximum over all positive edge densities. Remove its actual
zero-degree set and normalize the remaining vertex measure to one; the
ratio and global maximality are unchanged.

Proposition 3.3 applies to this graphon even if its values are fractional.
Its active density is therefore greater than $p_\ast$. Pad by isolates as
in Lemma 2.5. The padded graphon is still a global maximum, has
$p=p_\ast$ and $\theta=\tau$, and satisfies the vertex first-order
conditions. Proposition 6.1 makes it triangle-free. Proposition 7.1 then
identifies its active part as a balanced Clebsch graphon.

Removing the added isolates and undoing the common vertex-measure scaling
proves $W\in\mathcal E_{\mathrm{Cl}}$. This argument classifies an arbitrary
equality graphon. It does not infer graphon equality from equality on finite
graphs, and it does not impose edge optimality on a merely template-optimal
finite weighting.
\end{proof}

\begin{proof}[Equality in Theorem 1.1]
Normalize the total positive vertex weight to one and represent the weighted
graph as a step graphon with those class measures. At equality, Theorem 1.3
gives a balanced Clebsch active part and isolates. Each original
positive-weight vertex has a constant neighborhood row, so its cell belongs
to one of the twin classes, or to the isolate set, up to null sets. The
sixteen active classes therefore have equal total vertex weight, and their
adjacencies are exactly the Clebsch adjacencies. The converse follows from
the counts $E=40a^2$ and $C=192a^5$ when each active class has total weight
$a$.
\end{proof}

\section{Stability on the edge scale}
\label{sec:stability}

The number of isolated or low-degree vertices is not controlled by an edge
constraint. We first remove a small amount of edge mass to obtain a core
with positive normalized minimum degree. Compactness then places that core
in the neighborhood covered by the local stability theorem. A final
rounding argument produces equal integer class sizes without an additive
error term.

\begin{theorem}[Edge-scaled stability]\label{thm:edge-stability}
There are absolute constants $K<\infty$ and $\varepsilon_0>0$ such that the
following holds. Let $G$ be a finite graph with $m>0$ edges, and suppose that
$0\leq\varepsilon\leq\varepsilon_0$ and
\[
 I_5(G)\geq(R_\ast-\varepsilon)\left(\frac m5\right)^{5/2}.
\]
There is a graph $B$ on the same vertex set, consisting of a Clebsch blow-up
with sixteen classes of the same positive integer size and isolated
vertices, for which
\[
 |E(G)\mathbin{\triangle}E(B)|\leq K\sqrt\varepsilon\,m.
\]
If $s$ is the common class size, then
\[
 |m-40s^2|\leq K\sqrt\varepsilon\,m,\qquad
 s=(1+O(\sqrt\varepsilon))\sqrt{\frac m{40}}.
\]
\end{theorem}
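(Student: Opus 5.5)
The proof follows the three-stage outline announced at the start of this section: peeling to a dense core, a qualitative step by compactness, and a quantitative local step followed by rounding.

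\emph{Peeling.} Normalize so that $m$ is the edge count. Iteratively delete every vertex whose degree is below $\eta\sqrt m$, for a small absolute constant $\eta$, and repeat. To bound the edge mass lost, use the link inequality (12): a vertex of degree $d$ lies in at most $\frac14 m d^2$ induced pentagons, up to the normalization of Lemma~3.1. The sub-counting then works as follows.
\begin{itemize}
\item Suppose peeling removes $\delta m$ edges and leaves a graph $G'$ with $m'=(1-\delta)m$ edges.
\item By Theorem~1.1 applied to $G'$, $I_5(G')\le R_*(m'/5)^{5/2}$.
\item The deleted low-degree vertices contribute at most $O(\eta)\,\delta\, m^{5/2}$ pentagons.
\end{itemize}
Comparing with the hypothesis gives
\[
R_*\bigl(1-(1-\delta)^{5/2}\bigr)\lesssim \varepsilon+O(\eta\delta),
\]
hence $\delta=O(\varepsilon)$ once $\eta$ is small. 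The core $G'$ has minimum degree at least $\eta\sqrt{m}$, so it has at most $2\sqrt m/\eta$ vertices and edge density bounded below. Its normalized ratio is still $R_*-O(\varepsilon)$.

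\emph{Compactness.} Suppose that for every $\varepsilon\to0$ there were cores at cut distance at least $\xi$ from every member of $\mathcal E_{\mathrm{Cl}}$. Pass to a convergent subsequence of their graphons. The edge density stays bounded below by the minimum-degree condition, and the ratio is continuous in cut distance (Lemma~2.2), so the limit graphon $W$ has $R(W)=R_*$. Theorem~1.3 then puts $W$ in $\mathcal E_{\mathrm{Cl}}$. In fact the minimum-degree bound survives in the limit in a weak form, so the active part has full measure up to lost edge mass. Hence every near-extremal core is cut-close to a balanced Clebsch graphon.

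\emph{Local stability.} I expect this step to be the main obstacle: upgrading cut-closeness to the linear/quadratic bound
\[
|E\triangle E(H)|+\sum_i\bigl(n_i-N/16\bigr)^2=O(\varepsilon m).
\]
The approach is as follows.
\begin{itemize}
\item Assign each vertex of the core to the Clebsch class whose neighborhood profile is closest. This is possible because the sixteen Clebsch profiles are separated in $L^1$: distinct rows differ on six of the sixteen classes.
\item Expand the pentagon count around the Clebsch blow-up $H$ on this partition.
\item An adjacency error (an extra edge inside the blow-up, or a missing Clebsch edge) changes the pentagon count at first order. By the strict complementary slackness in (7) at the Clebsch graphon, $q-\kappa$ is bounded away from zero off the support, and the codegree structure destroys pentagons through missing edges. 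So each edit costs a constant times $m^{3/2}$ pentagons, i.e.\ order $m^{-1}$ in normalized units. This needs a check that $q\neq\kappa$ strictly on both edges and nonedges of Clebsch, which I would verify directly from the Clebsch counts.
\item A class imbalance $n_i-N/16$ costs only at second order. This cost comes from nondegeneracy of the Hessian $G$ in (6) restricted to mean-zero class weightings, i.e.\ strict concavity of $C/E^{5/2}$ on the Clebsch template at the balanced point. That reduces to the eigenvalues of the Clebsch adjacency matrix ($5,1,-3$) and an explicit computation.
\item Misassigned vertices are absorbed into the adjacency term by the profile separation.
\end{itemize}
Combining the first-order and second-order costs yields
\[
|E\triangle E(H)|\lesssim\varepsilon m,\qquad \sum_i(n_i-N/16)^2\lesssim\varepsilon m .
\]

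\emph{Rounding.} Finally, move whole vertices to the isolate set, or delete them, to equalize the class sizes at a common value $s$. Each moved vertex costs about $5s\approx\sqrt m$ edge edits. By Cauchy--Schwarz,
\[
\sum_i\bigl|n_i-N/16\bigr|\le 4\Bigl(\sum_i (n_i-N/16)^2\Bigr)^{1/2}=O(\sqrt{\varepsilon m}),
\]
so the total cost is $O(\sqrt{\varepsilon m}\cdot\sqrt m)=O(\sqrt\varepsilon\, m)$. This is the source of the exponent $1/2$. The peeled vertices and edges add only $O(\varepsilon m)$ further edits. The relation
\[
|m-40s^2|\le K\sqrt\varepsilon\, m
\]
follows from the triangle inequality on edge counts, and $s=(1+O(\sqrt\varepsilon))\sqrt{m/40}$ follows by taking square roots. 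For $\varepsilon$ below $\varepsilon_0$ this also guarantees $s\ge1$.
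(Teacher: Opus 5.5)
Your four stages follow the paper's proof. Peeling at threshold $\eta\sqrt m$ with $\eta<4c_\ast$ is Lemma 8.2, the compactness step is Lemma 8.3, the linear/quadratic split is the four-defect inequality (91) behind Theorem A.1 and Theorem 9.3, and trimming to the smallest class with Cauchy--Schwarz is Lemma 8.4. The peeling and rounding steps are fine. The gap is in the local step: it only treats vertices whose rows are already close to some Clebsch row.

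\emph{Far rows.} Consider vertices whose rows lie at $L^1$ distance at least a fixed $\eta$ from all seventeen reference rows (the zero row and the sixteen neighborhood rows). First-order edge repair cannot handle them. The sign margins $q_{W_0}-\kappa_0\in\{12a^3,-18a^3,-12a^3\}$ persist only under small row errors at both endpoints, by (87): $|q_W-q_T|\le 6(e_x+e_y+e)$. For such a vertex $e_x$ is of constant size, so the sign of $q-\kappa$ along the repair path is not controlled. In discrete terms, a vertex with wrong degree of order $\sqrt m$ lies in order $m^2$ five-sets containing two of its wrong pairs. That is the same order as the linear contribution of its wrong edges, about $\sqrt m\cdot m^{3/2}$, so the interaction terms cannot be dropped. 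Profile separation only bounds the mass of vertices misassigned among near-reference rows; it says nothing about the deficit these far rows cause. Compactness only gives that their mass is $o(1)$. Their mass enters the $L^1$ distance linearly, so you need it to be $O(\varepsilon)$ for linear stability, or at least $O(\sqrt\varepsilon)$ for this theorem.

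\emph{Missing ingredient.} What you need is the neighborhood inequality (Lemma A.2). A vertex added to the Clebsch graph with neighborhood $N$ lies in $c(N)\le 12|N|$ induced pentagons, with equality only for $N=\varnothing$ or $N=N_{\mathrm{Cl}}(v)$; the proof uses that the maximum cut of the Petersen graph is $12$. You also need its quantitative fractional form (79): $12\sum_ip_i-c(p)\ge\operatorname{dist}_1(p,\mathcal M)/272$. Neither complementary slackness at $W_0$ nor the template Hessian yields this. Profile insertion at the global maximum gives only the weak inequality $c(p)\le 12\,d(p)$, with no margin away from $\mathcal M$.

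\emph{How it closes the argument.} With (79), delete the far set $B$ along the density path $z_t=1-t\mathbf{1}_B$. Estimates (82)--(84) then show that $F$ increases by at least $c_B\mu_B$. After this, every remaining row is $\eta$-close to a unique reference row, and your edge-repair and Hessian steps go through as you describe.

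\emph{Minor point.} State explicitly how you pass from cut-closeness to $L^1$-closeness before defining row profiles. Here it is easy: $W_{\mathrm{Cl}}$ is a $\{0,1\}$-valued $16$-step graphon, so $\lVert U-W_{\mathrm{Cl}}\rVert_1\le 256\lVert U-W_{\mathrm{Cl}}\rVert_\square$.
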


There is no restriction on $|V(G)|$ and no additional rounding term. The
square-root dependence on the deficit is optimal, as shown in Proposition
8.5 below.

\subsection{Removing low-degree vertices}

\begin{lemma}[Weighted peeling]\label{lem:peeling}
Let $(H,x)$ be a finite weighted graph with $E(H,x)=1$ and
$C(H,x)\geq c_\ast-\eta$, where $0\leq\eta\leq c_\ast/4$. Repeatedly remove
a vertex whose current weighted degree is at most $c_\ast$, keeping the
other weights fixed. Write $(H',x')$ for the remaining core,
$E'=E(H',x')$, $C'=C(H',x')$, and
$W'=\sum_{v\in V(H')}x'_v$. Then
\begin{align}
 1-E'&\leq\frac{4\eta}{3c_\ast},\tag{37}\label{eq:37}\\
 \frac{C'}{(E')^{5/2}}&\geq C(H,x),\tag{38}\label{eq:38}\\
 W'&<\frac2{c_\ast}.\tag{39}\label{eq:39}
\end{align}
In particular $E'>0$. After the core weights are normalized to have sum one,
its weighted degrees are all at least $c_\ast^2/2$.
\end{lemma}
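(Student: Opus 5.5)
The plan is to follow the edge mass and the pentagon mass through each removal, and to control the pentagon loss by the universal link inequality~(3) (Lemma~3.1) applied in the current graph. Suppose that at some stage the current weighted graph has edge mass $E_{\mathrm{cur}}\le 1$, and we remove a vertex $v$ whose current weighted degree is $d_v\le c_\ast$. The edge mass drops by exactly $\delta=x_vd_v$. The pentagon mass drops by $x_vC_v$, where $C_v$ is the vertex derivative in the current graph. By (3),
\[
 x_vC_v\le \tfrac14 E_{\mathrm{cur}}\,x_vd_v^2\le \tfrac{c_\ast}{4}\,\delta .
\]
Summing over all removals, and writing $\Delta=1-E'$ for the total removed edge mass, gives
\[
 C'\ge C(H,x)-\tfrac{c_\ast}{4}\Delta .
\]
The procedure terminates because each step deletes a vertex. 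When it stops, every remaining vertex has current degree strictly greater than $c_\ast$.

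For (37), I will combine this loss estimate with the sharp inequality already proved in Theorem~1.1, applied to the core. Since $E'\le 1$, we have $(E')^{5/2}\le E'$, and therefore
\[
 c_\ast-\eta-\tfrac{c_\ast}{4}\Delta\le C'\le c_\ast (E')^{5/2}\le c_\ast(1-\Delta).
\]
Rearranging gives $\tfrac34c_\ast\Delta\le\eta$, which is (37). Because $\eta\le c_\ast/4$, this yields $\Delta\le 1/3$, so $E'\ge 2/3>0$.

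For (38), I need $C'\ge C(H,x)(E')^{5/2}$. Given the loss estimate, it suffices to show $C(H,x)\bigl(1-(E')^{5/2}\bigr)\ge \tfrac{c_\ast}{4}\Delta$. This holds because $1-(E')^{5/2}\ge 1-E'=\Delta$ and $C(H,x)\ge c_\ast-\eta\ge \tfrac34c_\ast\ge\tfrac{c_\ast}4$.

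For (39), every core vertex has degree $d_v>c_\ast$. Hence
\[
 2E'=\sum_{v\in V(H')}x'_vd_v>c_\ast W',
\]
and so $W'<2E'/c_\ast\le 2/c_\ast$. Finally, dividing all core weights by $W'$ divides every weighted degree by $W'$. The normalized degrees are therefore greater than $c_\ast/W'>c_\ast^2/2$.

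The only point that needs care is the application of (3) at intermediate stages. It must be applied to the current graph, which is legitimate because Lemma~3.1 is universal and needs no optimality. It also uses the bound $E_{\mathrm{cur}}\le 1$, which holds because edge mass only decreases. The rest is bookkeeping, so I do not expect a serious obstacle.
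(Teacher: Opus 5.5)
Your proposal is correct and follows the paper's proof essentially step for step. Both arguments bound the pentagon loss of each removal by $\frac{c_\ast}{4}x_vd_v$ using the link inequality in the current graph, charge this against the removed edge mass $1-E'$, apply Theorem~1.1 to the core to get (37) and (38), and use $2E'>c_\ast W'$ to get (39); the only differences are cosmetic (you cite (3) from Lemma~3.1 where the paper rederives it inline, and you observe that (38) needs only $C(H,x)\ge c_\ast/4$).
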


\begin{proof}
At an intermediate stage, let $E_t$ be the current edge mass and let $d_i$
be the current weighted degree of a vertex $i$ about to be removed. Fixing
the edge opposite $i$ in a pentagon gives
\[
 C_i\leq\frac14 E_t d_i^2.
\]
Indeed, for a fixed opposite edge $uv$, the two neighbors of $i$ lie in the
exclusive $u$-neighbor and exclusive $v$-neighbor subsets of $N(i)$.
Inducedness makes these sets disjoint even when the graph has triangles.
Their weight product is at most $d_i^2/4$. Removing $i$ destroys edge mass
$x_i d_i$ and pentagon mass $x_i C_i$. As $E_t\leq1$ and $d_i\leq c_\ast$,
the latter is at most $(c_\ast/4)x_i d_i$.

Put $\alpha=1-E'$. Each removed edge and each destroyed pentagon is charged
at its first deleted endpoint, so
\begin{equation}\tag{40}\label{eq:40}
 0\leq C(H,x)-C'\leq\frac{c_\ast}4\alpha.
\end{equation}
Theorem 1.1, applied to the surviving core, gives
\[
 c_\ast-\eta-\frac{c_\ast}4\alpha\leq C'
 \leq c_\ast(1-\alpha)^{5/2}.
\]
Since $1-(1-\alpha)^{5/2}\geq\alpha$ for $0\leq\alpha\leq1$, this proves
(37), and hence $E'\geq2/3$. Moreover $C(H,x)\geq3c_\ast/4$, so
\[
 C'\geq C(H,x)-\frac{c_\ast}4\alpha
 \geq C(H,x)(1-\alpha)\geq C(H,x)(1-\alpha)^{5/2}.
\]
This is (38).

Every surviving vertex has weighted degree greater than $c_\ast$.
Consequently
\[
 c_\ast W'<\sum_{i\in V(H')}x_i d_i=2E'\leq2.
\]
After dividing the vertex weights by $W'$, every weighted degree exceeds
\[
 \frac{c_\ast}{W'}>\frac{c_\ast^2}{2E'}\geq\frac{c_\ast^2}2.
\]
The degrees must be recomputed after each removal; this is why the procedure
is iterative.
\end{proof}

\subsection{Entering the local neighborhood}
Let $W_{\mathrm{Cl}}$ denote a full balanced Clebsch graphon. We use
$\delta_\square$ for cut distance and $\delta_1$ for $L^1$ distance, in each
case taking the infimum over measure-preserving relabelings. The standard
compactness of graphons in cut distance and continuity of fixed finite
homomorphism densities will be used only qualitatively.

\begin{lemma}\label{lem:positive-degree-compactness}
Fix $\mu>0$. If $W_j$ are graphons with $d_{W_j}\geq\mu$ almost everywhere
and $R(W_j)\to R_\ast$, then $\delta_1(W_j,W_{\mathrm{Cl}})\to0$.
\end{lemma}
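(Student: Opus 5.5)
The plan is to argue by contradiction, using cut-distance compactness and the equality classification of Theorem~1.3. The one genuinely new ingredient is that cut convergence to a $\{0,1\}$-valued step graphon already forces $L^1$ convergence.

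First, the setup. Suppose the conclusion fails. Then there are $\eta>0$ and a subsequence, still written $W_j$, with $\delta_1(W_j,W_{\mathrm{Cl}})\geq\eta$ for all $j$. Each $W_j$ is a graphon on a probability space with $d_{W_j}\geq\mu$ almost everywhere. Hence its edge density satisfies $t(K_2,W_j)\geq\mu>0$.

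Next, pass to a cut-limit. By compactness of graphons modulo cut distance \cite[Theorem~5.1]{LovaszSzegedy2007}, after passing to a further subsequence there are a graphon $W$ and measure-preserving relabelings $\phi_j$ with $\|W_j^{\phi_j}-W\|_\square\to0$. The edge density is continuous in cut distance \cite{LovaszSzegedy2006}, and so is the induced pentagon density, which expands as a finite alternating sum of homomorphism densities (as in Lemma~2.2). The edge densities stay bounded below by $\mu$, so $R(W)=\lim R(W_j)=R_\ast$.

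The minimum-degree bound also passes to the limit. For any measurable $S$,
\[
 \int_S d_W=\int_{S\times[0,1]}W=\lim_j\int_{S\times[0,1]}W_j^{\phi_j}\geq\mu|S|,
\]
because rectangle integrals converge under the cut norm. Hence $d_W\geq\mu$ almost everywhere, and $W$ has no isolated mass.

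Theorem~1.3 now applies: $W$ belongs to $\mathcal E_{\mathrm{Cl}}$. Since its active set has full measure, $W$ is a full balanced Clebsch graphon on a partition into sixteen classes of measure $1/16$. After composing with one further measure-preserving map, we may take $W=W_{\mathrm{Cl}}$, with cut convergence retained.

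Finally, upgrade cut convergence to $L^1$ convergence. I expect this to be the step needing the most care, but it is routine here because $W_{\mathrm{Cl}}$ is a $\{0,1\}$-valued step function. Let $A_1,\dots,A_{16}$ be its classes. On each rectangle $A_a\times A_b$ the limit is constant, equal to $0$ or $1$, and the cut norm controls rectangle integrals. If the block is a zero block,
\[
 \int_{A_a\times A_b}|W_j^{\phi_j}-W_{\mathrm{Cl}}|=\int_{A_a\times A_b}W_j^{\phi_j}\longrightarrow0.
\]
If it is a one block, the same holds with $1-W_j^{\phi_j}$ in place of $W_j^{\phi_j}$, and its integral over the block tends to $0$. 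In both cases the nonnegativity of $W_j^{\phi_j}$ or $1-W_j^{\phi_j}$ turns vanishing signed integrals into vanishing absolute integrals. Summing over the $256$ blocks gives $\|W_j^{\phi_j}-W_{\mathrm{Cl}}\|_1\to0$. Hence $\delta_1(W_j,W_{\mathrm{Cl}})\to0$ along the subsequence, contradicting the choice of $\eta$.

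The only points to watch are bookkeeping: the relabelings must be composed consistently, so that the identification of the limit with $W_{\mathrm{Cl}}$ and the cut convergence use the same maps; and the degree lower bound is exactly what prevents the limit from placing positive measure on isolates.
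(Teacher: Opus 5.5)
Your proposal is correct and follows essentially the same route as the paper. Both arguments use cut-norm compactness, pass the minimum-degree bound to the limit via rectangle integrals, identify the limit as a full balanced Clebsch graphon by Theorem~1.3, and upgrade cut convergence to $L^1$ convergence by the blockwise sign argument, which the paper packages as $\|U-W_{\mathrm{Cl}}\|_1\le256\|U-W_{\mathrm{Cl}}\|_\square$ before taking infima over relabelings; your contradiction-plus-subsequence framing is equivalent to the paper's argument that every subsequence has a further convergent subsequence.
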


\begin{proof}
Their edge densities are at least $\mu$. From any subsequence, compactness
extracts a further subsequence converging in cut distance to a graphon $W$.
Choose relabelings giving cut-norm convergence. For every measurable
$S\subseteq[0,1]$,
\[
 \int_S d_W=\lim_j\int_S d_{W_j}\geq\mu|S|,
\]
because the difference of the two integrals is bounded by the cut norm.
Thus $d_W\geq\mu$ almost everywhere. The equality $R(W)=R_\ast$ passes to
the limit by induced-density continuity; the denominator in $R$ stays
bounded away from zero. Theorem 1.3 identifies $W$ as balanced Clebsch on
its active set, with isolates outside. The minimum-degree condition
excludes a positive-measure isolate set, so $W$ is a full balanced Clebsch
graphon.

This proves convergence of the whole sequence in cut distance. To obtain
$L^1$ convergence, fix the sixteen classes $A_1,\ldots,A_{16}$ of a labeling
of $W_{\mathrm{Cl}}$. Since $W_{\mathrm{Cl}}$ is $\{0,1\}$-valued,
$U-W_{\mathrm{Cl}}$ has a constant sign on each $A_i\times A_j$. Hence
\[
 \|U-W_{\mathrm{Cl}}\|_1
 =\sum_{i,j}\left|\int_{A_i\times A_j}(U-W_{\mathrm{Cl}})\right|
 \leq256\|U-W_{\mathrm{Cl}}\|_\square.
\]
Taking infima over relabelings completes the proof.
\end{proof}

\subsection{Rounding whole vertices}
Write $\mathcal E_{\mathrm{Cl}}$ for the family defined in the introduction.
The next lemma is useful because balancing real class masses by rounding
their sizes separately would introduce an unnecessary additive error.

\begin{lemma}[Integral rounding]\label{lem:integral-rounding}
Let $W_G$ be the probability graphon of an $n$-vertex graph, and let
$U\in\mathcal E_{\mathrm{Cl}}$ have active mass $q\geq1/2$. There is a graph
$B$ on $V(G)$, consisting of sixteen equal integer Clebsch classes and
isolates, such that
\[
 \|W_G-W_B\|_1\leq897\|W_G-U\|_1.
\]
The common class size is positive when $\|W_G-U\|_1$ is sufficiently small.
\end{lemma}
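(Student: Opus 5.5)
The plan is to round each vertex of $G$ to the Clebsch class whose row profile it most resembles. This gives a possibly unbalanced Clebsch blow-up $B_0$ on $V(G)$, and I would then equalize the class sizes by moving whole vertices, choosing the target size so that no additive rounding term appears.

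\textbf{Setup.} Write $\delta=\|W_G-U\|_1$. Relabel so that $U$ has classes $A_1,\dots,A_{16}$ of measure $q/16$ and isolate set $A_0$. Let $I_v$ be the interval of measure $1/n$ carrying the vertex $v$. The row of $U$ at a point of $A_i$ is the profile $\rho_i=\sum_{j\sim i}\mathbf 1_{A_j}$, with $\rho_0=0$. Let $r_v$ denote the row function of $W_G$ on $I_v$.

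\textbf{Step 1: labels and the misclassified set.} Assign to $v$ a label $i(v)\in\{0,\dots,16\}$ minimizing $\|r_v-\rho_i\|_1$. Integrating over $x\in I_v$ gives
\[
\sum_v\tfrac1n\|r_v-\rho_{i(v)}\|_1\le\delta .
\]
The profiles are well separated. Two adjacent Clebsch vertices have disjoint neighbourhoods, so their profiles differ on $10$ classes. Two nonadjacent ones share two neighbours, so they differ on $6$ classes. A class profile differs from $\rho_0$ on $5$ classes. Hence $\|\rho_i-\rho_j\|_1\ge 5q/16\ge5/32$ for $i\ne j$.

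Let $X$ be the set of points $x\in I_v$ whose $U$-class $j$ differs from $i(v)$. By the triangle inequality and the choice of $i(v)$, for such $x$
\[
5/32\le\|\rho_j-\rho_{i(v)}\|_1\le 2\|r_v-\rho_j\|_1 .
\]
Integrating gives $|X|\le\frac{64}{5}\delta$.

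\textbf{Step 2: the unbalanced blow-up $B_0$.} Let $B_0$ have classes $S_i=\{v:i(v)=i\}$ for $i\ge1$, joined by Clebsch adjacency, with $S_0$ isolated. Let $V_1(x,y)=\rho_{i(v)}(y)$ for $x\in I_v$. Then $\|W_G-V_1\|_1\le\delta$. Moreover $V_1(x,y)$ and $W_{B_0}(x,y)$ can differ only when $y\in X$. Therefore
\[
\|W_G-W_{B_0}\|_1\le\delta+|X|,
\]
and symmetrizing changes this by at most another $|X|$. Also $\bigl||S_i|/n-q/16\bigr|$ is at most the mass of $X$ meeting $I$-intervals labelled $i$ or $A_i$. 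Hence
\[
\sum_{i=1}^{16}\bigl||S_i|-nq/16\bigr|\le 2n|X| .
\]

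\textbf{Step 3: balancing without rounding loss (the main obstacle).} Naively rounding $nq/16$ produces an $O(1/n)$ error that need not be dominated by $\delta$. I would instead choose the common size $s$ to be an integer median of $|S_1|,\dots,|S_{16}|$, capped so that $16s\le n$. Since a median minimizes the sum of absolute deviations over all reals, it does at least as well as $nq/16$, giving
\[
\sum_{i=1}^{16}\bigl||S_i|-s\bigr|\le\sum_{i=1}^{16}\bigl||S_i|-nq/16\bigr|\le 2n|X| .
\]
The cap preserves this bound up to a constant, using $16|S_0|\ge$ the total deficit once $q\le1$. I would then move surplus vertices out of overfull classes, first into deficient classes and otherwise into $S_0$, and draw any remaining shortfall from $S_0$.

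Each move changes one row and one column of the step graphon. The cost is at most $2\cdot(5s/n+1/n)\cdot\frac1n\le\frac{4}{n}$ per moved vertex. The total number of moves is at most $\sum_i||S_i|-s|\le2n|X|$, so the total added cost is $O(|X|)=O(\delta)$. Collecting the constants from Steps 1--3, all linear in $\delta$, should give the stated factor $897$; I would verify that arithmetic only at the end.

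\textbf{Positivity of the class size.} When $\delta$ is small, every $|S_i|$ is within $2n|X|=O(n\delta)$ of $nq/16\ge n/32$. Hence the median $s$ is positive.

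I expect Step 3 to be the delicate point. Two things need care there: making the median argument compatible with the constraint $16s\le n$ when there are few isolates, and checking that no hidden $1/n$ term survives.
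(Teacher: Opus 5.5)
Your Steps 1 and 2 follow the paper's argument. Both assign each whole vertex to the nearest of the seventeen prototype rows, use the separation $5q/16\ge 5/32$, bound the misassigned mass by $|X|\le 64\delta/5$, and show $\sum_{i=1}^{16}\bigl||S_i|-nq/16\bigr|\le 2n|X|$. Your estimate $\|W_G-W_{B_0}\|_1\le\delta+|X|$ is even slightly sharper than the paper's $\zeta+2M$.

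\textbf{The gap is in Step 3, in the capped case you flagged.} The justification you give for the cap, ``$16|S_0|\ge$ the total deficit'', can fail. Take $S_0=\varnothing$, class sizes $K+1,K-1,K,\dots,K$ and $n=16K$: there the deficit of $1$ is filled from the surplus, not from $S_0$. It is also not what is needed. Feasibility only requires $16s\le n$. What is missing is a bound on the extra cost when the cap binds, which is exactly where an additive $1/n$ loss could hide. It can be supplied as follows.
\begin{itemize}
\item Put $f(t)=\sum_{i=1}^{16}\bigl||S_i|-t\bigr|$, $c=nq/16\le n/16$ and $K=\lfloor n/16\rfloor$.
\item If the median exceeds $n/16$, at least nine of the $|S_i|$ are $\ge K+1$. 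But $\sum_{i\ge1}|S_i|\le n<16(K+1)$ forces some $|S_i|\le K$. Checking $c<K$ and $K\le c<K+1$ separately gives $f(c)\ge 1$.
\item Since $f$ is convex, minimized at the median, and $16$-Lipschitz, $f(K)\le f(c)+16\le 17f(c)\le 34n|X|$.
\item Charge each moved vertex $2/n$, since only its row and column strips, each of measure $1/n$, change; do not use your $4/n$. Then
\[
\|W_G-W_B\|_1\le\delta+|X|+68|X|\le\bigl(1+69\cdot\tfrac{64}{5}\bigr)\delta<897\delta.
\]
With $4/n$ per move and this cap loss, the arithmetic overshoots $897$.
\end{itemize}

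The paper avoids this case analysis. It trims every active class down to the smallest one, $s_{\min}=\min_i|S_i|/n$, and sends the removed vertices to the isolated type. Only removals occur, so $16\,n s_{\min}\le n$ holds automatically and no cap is needed. The integer issue is then handled by one estimate, $\sum_i(s_i-s_{\min})\le 17\sum_i|s_i-b|$ with $b=q/16$, proved by splitting on whether $b\ge s_{\min}$. That factor $17$ is the source of $897=1+140\cdot\frac{32}{5}$. Your median target saves moves when the cap is inactive, but it needs the capped-case argument above to control the same worst case.
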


\begin{proof}
Put $\zeta=\|W_G-U\|_1$. Let $B_0$ be the isolate set of $U$, and let
$B_1,\ldots,B_{16}$ be its active classes, each of measure $b=q/16$.
Denote the corresponding row functions by $r_0=0,r_1,\ldots,r_{16}$.
Different rows have distance at least
\begin{equation}\tag{41}\label{eq:41}
 \rho=\frac{5q}{16}\geq\frac5{32}.
\end{equation}
A nonzero row meets five classes, while distinct Clebsch rows differ on six
or ten classes.

For a vertex $v$, let $I_v$ be its cell in $W_G$, of measure $1/n$, and let
$g_v$ be its constant row. Set
\[
 p_{v,i}=n|I_v\cap B_i|,\qquad
 e_{v,i}=\|g_v-r_i\|_1,\qquad
 e_v=\sum_{i=0}^{16}p_{v,i}e_{v,i}.
\]
Assign the entire vertex $v$ to a type $i(v)$ minimizing $e_{v,i}$. If this
minimum is at least $\rho/2$, then $e_v\geq\rho/2$. Otherwise, the triangle
inequality and (41) give $e_{v,j}\geq\rho/2$ for every $j\ne i(v)$. In both
cases
\[
 1-p_{v,i(v)}\leq\frac{2e_v}{\rho}.
\]
The total measure whose type changes is therefore
\begin{equation}\tag{42}\label{eq:42}
 M=\frac1n\sum_v(1-p_{v,i(v)})\leq\frac{2\zeta}{\rho}.
\end{equation}
Let $T$ be the Clebsch step graphon given by these whole-vertex assignments,
and let $s_i$ be the mass of its $i$th active class. Changing types on mass
$M$ gives
\[
 \|T-U\|_1\leq2M,\qquad\sum_{i=1}^{16}|s_i-b|\leq2M.
\]
Set $s_{\min}=\min_i s_i$. Remove excess vertices from each active class
until every class has size $ns_{\min}$, assigning the removed vertices to
the isolate type. This uses whole vertices because all $ns_i$ are integers.
The total mass reassigned is
\[
 \Delta=\sum_{i=1}^{16}(s_i-s_{\min})
 \leq17\sum_{i=1}^{16}|s_i-b|\leq34M.
\]
For the first inequality, if $b\geq s_{\min}$ use
$b-s_{\min}\leq\sum_i|s_i-b|$; if $b<s_{\min}$, the inequality is immediate.
The additional $L^1$ cost is at most $2\Delta\leq68M$. The resulting graph
$B$ is exactly balanced, and
\[
 \|W_G-W_B\|_1\leq\zeta+2M+68M
 \leq\left(1+\frac{140}{\rho}\right)\zeta\leq897\zeta.
\]
For sufficiently small $\zeta$, (42) and $b\geq1/32$ also imply $s_i>0$
for every $i$.
\end{proof}

This proof applies after arbitrary relabelings: the cells $I_v$ need not be
intervals. It also handles a nonattained distance infimum. Apply the lemma
to a sequence of comparison graphons approaching the infimum and use the
finiteness of the possible edit distances on a fixed vertex set. If the
infimum is zero, taking the error small enough to permit fewer than one
edge edit forces an exact copy.

\subsection{Proof of the stability theorem}

\begin{proof}[Proof of Theorem 8.1]
Give every vertex of $G$ weight $1/\sqrt m$. Then $E=1$ and
\[
 C\geq c_\ast-\eta,\qquad\eta=\frac{\varepsilon}{5^{5/2}}.
\]
Apply Lemma 8.2. The peeling removes at most
\begin{equation}\tag{43}\label{eq:43}
 \frac{4\varepsilon}{3c_\ast5^{5/2}}m
\end{equation}
edges and leaves a graph $H$ with
\[
 n_H<\frac{2\sqrt m}{c_\ast}.
\]
Its probability graphon has minimum degree at least $c_\ast^2/2$ and ratio
at least $R_\ast-\varepsilon$.

By Lemma 8.3, there is an absolute positive threshold for $\varepsilon$
below which $W_H$, after a relabeling, lies in the neighborhood of Theorem
A.1. Let $c_{\mathrm{loc}}>0$ be the constant in that theorem. Its
induced-pentagon functional is exactly the functional used here, so the
local conclusion is
\[
 d_1(W_H,\mathcal E_{\mathrm{Cl}})\leq
 \sqrt{\frac{\varepsilon}{c_{\mathrm{loc}}}}.
\]
Comparison graphons sufficiently close to this infimum have active mass
at least $1/2$ when $\varepsilon$ is small. To see this, also choose the
compactness neighborhood small enough that $W_H$ is close to the full
Clebsch graphon in $L^1$. Edge-density continuity and
$t(K_2,U)=(5/16)q^2$ then force $q$ close to one.

Lemma 8.4 gives a balanced Clebsch blow-up $B_H$ with isolates on $V(H)$.
Taking the comparison distance to its infimum, the number of edits inside
$H$ is at most
\[
 \frac{897n_H^2}2\sqrt{\frac{\varepsilon}{c_{\mathrm{loc}}}}
 \leq\frac{1794}{c_\ast^2\sqrt{c_{\mathrm{loc}}}}
 \sqrt\varepsilon\,m.
\]
Extend $B_H$ to the original vertex set by making every peeled vertex
isolated. Adding (43), and using $\varepsilon\leq\sqrt\varepsilon$ for
$\varepsilon\leq1$, proves the edit bound. For example, once
$\varepsilon_0$ has been chosen small enough for the preceding neighborhood
and nonemptiness requirements, one may take
\[
 K=\frac{1794}{c_\ast^2\sqrt{c_{\mathrm{loc}}}}
   +\frac4{3c_\ast5^{5/2}}.
\]
The value of $K$ is not optimized. Finally, a balanced Clebsch blow-up with
common class size $s$ has $40s^2$ edges. Its edge count differs from $m$ by
at most the number of edits, proving both remaining conclusions.
\end{proof}

\subsection{Optimality of the exponent}

\begin{proposition}\label{prop:sqrt-optimality}
The factor $\sqrt\varepsilon$ in Theorem 8.1 cannot be replaced by
$o(\sqrt\varepsilon)$, uniformly over finite graphs.
\end{proposition}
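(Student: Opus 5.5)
The plan is to take unbalanced Clebsch blow-ups. Their pentagon deficit is quadratic in the imbalance, while the number of edits needed to reach any exactly balanced Clebsch blow-up with isolates is linear in the imbalance times the edge scale. Fix two nonadjacent Clebsch classes, say $V_1$ and $V_2$. For integers $k\ge1$ and $1\le t\le k/2$, let $G_{k,t}$ be the Clebsch blow-up with $|V_1|=k+t$, $|V_2|=k-t$ and all other classes of size $k$. Write $m=m_{k,t}$ for its edge count, so $m=40k^2+O(tk)$.

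\textbf{Step 1: the deficit is $O((t/k)^2)$.} Normalize the class weights to $x_i=|V_i|/k$ and view $C(H,x)/E(H,x)^{5/2}$ as a smooth function of $x$ on the Clebsch template $H$. The function is homogeneous of degree zero and attains its maximum $c_*$ at the balanced point $x=\mathbf 1$ by Theorem~1.1. Hence its gradient vanishes there, and Taylor's theorem gives
\[
 R_*-5^{5/2}\frac{I_5(G_{k,t})}{m^{5/2}}\le C_0\,(t/k)^2
\]
for an absolute constant $C_0$. This uses that $I_5(G_{k,t})=C(H,x)k^5$ exactly, since an induced pentagon never uses two vertices of one class. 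So $G_{k,t}$ satisfies the hypothesis of Theorem~8.1 with $\varepsilon=C_0(t/k)^2$. No lower bound on the deficit is needed.

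\textbf{Step 2: the edit distance is $\Omega(tk)$.} I will use the elementary bound
\[
 |E(G)\mathbin{\triangle}E(B)|\ge\tfrac12\sum_v|\deg_G v-\deg_B v|.
\]
In any candidate $B$ every degree is either $0$ or $5s$. In $G_{k,t}$, classes 1 and 2 have exactly two common neighbor classes. Thus three classes are adjacent to $V_1$ but not to $V_2$, and their $3k$ vertices have degree $5k+t$. Three other classes are adjacent to $V_2$ but not to $V_1$, and their $3k$ vertices have degree $5k-t$.

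For any value $D\in\{0,5s\}$, at least one of these two groups of $3k$ vertices differs from $D$ by at least $t$. If $D=0$, the difference is even of order $k$. Summing over that group gives $|E(G_{k,t})\mathbin{\triangle}E(B)|\ge 3kt/2$. Since $m\asymp k^2$, this is $\ge c\,(t/k)\,m\ge c'\sqrt{\varepsilon}\,m$ with absolute $c'>0$.

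\textbf{Step 3: conclusion.} Suppose the factor could be replaced by some $\phi(\varepsilon)=o(\sqrt\varepsilon)$. Take $t=\lceil\delta k\rceil$ with $\delta>0$ small enough that $\varepsilon=C_0(t/k)^2\le\varepsilon_0$, and let $k\to\infty$. Steps 1 and 2 give $c'\sqrt\varepsilon\,m\le K\phi(\varepsilon)m$ along a sequence of values $\varepsilon\asymp\delta^2$. Letting $\delta\to0$ contradicts $\phi(\varepsilon)/\sqrt\varepsilon\to0$.

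The main point to handle carefully is Step 1. I need the stationarity of the ratio at the balanced Clebsch weights, which follows from interior maximality in Theorem~1.1. I also need the Taylor remainder to be uniform, which holds because the weights stay in a compact set $x_i\in[1/2,3/2]$ where $E$ is bounded below. The degree-counting lower bound is routine.
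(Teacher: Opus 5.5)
Your proof is correct and follows the paper's argument: the same unbalanced Clebsch blow-up with two nonadjacent classes of sizes $k\pm t$, and the same degree-discrepancy lower bound $\tfrac32 kt$ on the edit distance. The differences are cosmetic. The paper computes the deficit exactly ($m=40k^2$, $I_5=192k^5-12k^3t^2$) rather than bounding it by Taylor expansion at the stationary balanced point. It uses $s\le k$, forced by $|V(G)|=16k$, instead of your two-sided case split, and it simply takes $t=1$, $k\to\infty$ in place of your $t=\lceil\delta k\rceil$, $\delta\to0$.
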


\begin{proof}
Choose two nonadjacent vertices $u,v$ of $\mathrm{Cl}$. For integers
$k\geq2$ and $1\leq r<k$, form a Clebsch blow-up $G_{k,r}$ with class sizes
$k+r$ at $u$, $k-r$ at $v$, and $k$ at the other fourteen vertices.

Each Clebsch vertex lies in sixty pentagons, and each nonadjacent pair lies
in twelve. For the latter count, the pair has two common neighbors and six
three-edge paths between its endpoints. Each choice of a common neighbor
and such a path forms a pentagon, by triangle-freeness. The six paths can
also be counted from the five neighbors of one endpoint: two are adjacent
to the other endpoint and contribute zero codegree, while the other three
contribute codegree two.

The edge and pentagon polynomials are affine in each class size. The linear
changes at $u$ and $v$ cancel, and there is no quadratic edge term because
$u,v$ are nonadjacent. Thus, exactly,
\[
 |V(G_{k,r})|=16k,\qquad m=40k^2,\qquad
 I_5(G_{k,r})=192k^5-12k^3r^2.
\]
Its normalized deficit is
\[
 \varepsilon_{k,r}=R_\ast-\frac{I_5(G_{k,r})}{(m/5)^{5/2}}
 =\frac{R_\ast r^2}{16k^2}.
\]
The pair $u,v$ has two common neighbors. Its three exclusive neighbors on
each side have unchanged class size $k$, so $G_{k,r}$ has $3k$ vertices of
degree $5k+r$, $3k$ of degree $5k-r$, and $10k$ of degree $5k$.

A balanced Clebsch blow-up with isolates on the same $16k$ vertices has
common class size at most $k$, and hence maximum degree at most $5k$.
Each of the $3k$ high-degree vertices must lose at least $r$ in degree. One
edge edit changes the total absolute degree discrepancy by at most two.
Therefore every such target $B$ satisfies
\[
 |E(G_{k,r})\mathbin{\triangle}E(B)|\geq\frac32kr
 =\frac3{20\sqrt{R_\ast}}\sqrt{\varepsilon_{k,r}}\,m.
\]
Taking $r=1$ and $k\to\infty$ proves the assertion.
\end{proof}

\section{Linear structural stability and quadratic mass stability}
\label{sec:linear-stability}

The local repair argument proves a stronger conclusion than its quadratic
distance estimate alone. Edge errors have a linear cost in the extremal
deficit; only the imbalance of the sixteen class masses has a quadratic
cost. We retain these two conclusions separately.

Let $\mathcal T_{\mathrm{Cl}}$ denote the class of graphons obtained from an
arbitrary sixteen-class Clebsch blow-up and an isolated class. The sixteen
active masses need not be equal. For a presented member $T$ with active
masses $s_1,\ldots,s_{16}$, put
\[
 q(T)=\sum_{i=1}^{16}s_i,\qquad
 V(T)^2=\sum_{i=1}^{16}(s_i-q(T)/16)^2.
\]

\begin{proposition}[The two local defects]\label{prop:two-defects}
There are constants $A,B>0$ and $\delta>0$ such that, whenever
$\|W-W_{\mathrm{Cl}}\|_1<\delta$, a graphon $T\in\mathcal T_{\mathrm{Cl}}$
exists with all active masses at least $1/32$ and
\begin{equation}\tag{44}\label{eq:44}
 \|W-T\|_1\leq AD(W),\qquad V(T)^2\leq BD(W),\qquad
 D(W)=R_\ast-R(W).
\end{equation}
\end{proposition}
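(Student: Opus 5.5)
The plan is to combine a second-order expansion of \(R\) around \(\mathcal T_{\mathrm{Cl}}\) with a linear penalty for leaving the blow-up family. Given \(W\) close to \(W_{\mathrm{Cl}}\), I first pick a nearest presented member \(T\in\mathcal T_{\mathrm{Cl}}\). Concretely, I optimize over class assignments, where each point receives a type in \(\{0,1,\dots,16\}\) according to its row, exactly as in the proof of Lemma~8.4. This makes \(\|W-T\|_1\) comparable to \(\delta_1(W,\mathcal T_{\mathrm{Cl}})\) up to an absolute factor. Since \(W\) is close to the full Clebsch graphon, the isolate class of \(T\) has small mass and every active mass is near \(1/16\), hence at least \(1/32\). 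Write \(W=T+H\), with \(H\le0\) on the edge blocks of \(T\) and \(H\ge0\) on the nonedge blocks.

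The key step is a decomposition
\[
 D(W)=D(T)+\bigl[R(T)-R(W)\bigr].
\]
For the second term I aim to show
\[
 R(T)-R(W)\ge c\|H\|_1-O\bigl(\|H\|_1^2\bigr)-O\bigl(V(T)\|H\|_1\bigr).
\]
The linear coefficient comes from the edge variation (7) evaluated at the balanced Clebsch graphon. On an edge block, removing edge mass lowers \(C\) at rate \(A\), and \(A\) exceeds \(\kappa\) strictly there. On a nonedge block, adding edge mass lowers \(C\) at rate \(B\) and raises \(E\), and \(B-A+\kappa>0\) strictly, because Clebsch is triangle-free so \(A=0\) on nonedges. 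The same holds for the isolate class.

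These strict gaps in the kernel \(q-\kappa\) at \(W_{\mathrm{Cl}}\) are finitely many explicit numbers, which I would compute from the counts of \(60\) pentagons per vertex and \(12\) per nonadjacent pair. By continuity they stay bounded below at \(T\) when the class masses are near balanced. The reassignment choice of \(T\) is what prevents the first-order term from vanishing along directions that merely move mass between classes. Any perturbation that behaves like a mass shift has already been absorbed into \(T\), so \(H\) is orthogonal to the tangent space of \(\mathcal T_{\mathrm{Cl}}\) in the relevant sense.

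The first term \(D(T)\) is a function of the sixteen masses \(s_i\) alone. At the balanced point it has zero gradient by (5), and the ratio is invariant under scaling \(q\). I must show its Hessian is negative definite on the complement of the scaling direction, so that \(D(T)\ge c'V(T)^2\). This is the weighted version of Theorem~1.1 for a fixed template, and it reduces to a \(16\times16\) computation. By vertex-transitivity the Hessian is a polynomial in the Clebsch adjacency matrix, and I check its eigenvalues on the eigenspaces \(5,1,-3\). The \(-3\) eigenspace is the delicate one mentioned in the introduction, but it should still give a strictly positive gap, as the example \(I_5=192k^5-12k^3r^2\) in Proposition~8.5 suggests.

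The main obstacle is combining the two terms, because the cross term \(O(V(T)\|H\|_1)\) must be absorbed. I would handle it with \(ab\le \eta a^2+b^2/(4\eta)\), which is too weak in \(\|H\|_1\) if the \(\|H\|_1^2\) term is used. The better route is to take \(\delta\) small, so that \(V(T)\) and \(\|H\|_1\) are both small. Then \(V(T)\|H\|_1\le\eta\|H\|_1\) is absorbed into the linear term \(c\|H\|_1\). The result is
\[
 D(W)\ge (c/2)\|H\|_1+c'V(T)^2,
\]
which gives (44) with \(A=2/c\) and \(B=1/c'\).

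One case needs separate care: fractional values of \(W\) strictly inside a block, where both one-sided directions occur. This is handled because the sign convention for \(H\) is determined by \(T\), not by \(W\), so each block contributes with the correct strict sign.
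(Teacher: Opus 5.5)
\textbf{There is a genuine gap.} Your Hessian treatment of $D(T)$ and your use of the strict sign margins of $q-\kappa$ at $T$ match the last two steps of the paper's argument (edge repair on $M$, then Lemma~A.4). The problem is the key estimate $R(T)-R(W)\ge c\|H\|_1-O(\|H\|_1^2)-O(V(T)\|H\|_1)$, which does not follow from first-order edge variations.

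The second-order part of the expansion of $P$ along $H$ contains terms in which two perturbed pairs share a vertex. Those terms are bounded only by $\int e_x^2\,dx$, where $e_x=\int|H(x,y)|\,dy$ is the row error, not by $\|H\|_1^2$. Equivalently, by (87), the margin of $q_{W_t}-\kappa$ survives along $T+tH$ only where $e_x$ and $e_y$ are small.

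The hypothesis $\|W-W_{\mathrm{Cl}}\|_1<\delta$ does not bound $\sup_x e_x$. A set of measure up to $\delta/\eta$ may consist of rows at distance at least $\eta$ from all seventeen reference rows, whichever type you assign them. The rooted pentagon has four pairs at its root, so the link of such a row is a degree-four polynomial in the row entries. All orders of the expansion are then of the same size as the measure of that set, and nothing in your argument fixes the sign of their sum. Your reassignment choice of $T$ does not help here, since it only relabels these rows.

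The missing idea is a whole-row argument for these bad rows. Replacing or deleting a row with profile $p$ changes $F$ at a rate proportional to $12d(p)-c(p)$. So you need this quantity to be strictly positive, with a fixed lower bound, away from $\mathcal M$. That is the neighborhood inequality $c(N)\le12|N|$, with equality only for $N=\varnothing$ and the sixteen Clebsch neighborhoods (Lemma~A.2, which uses the maximum-cut analysis of the Petersen graph), in its quantitative fractional form (79).

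With the background replacement (84), the paper obtains $L(x)-\kappa d(x)\le-a^4\eta/34$ on the bad set. Deleting that set along the vertex-density path (82) gains $c_B\mu_B$, and the near-zero rows are removed similarly using (85). Only then are all remaining row errors at most $\eta+\nu$, so that the edge repair and the Hessian step apply. This gives the four-defect inequality (91), from which (44) follows because the three operations cost $2\mu_B+2D_Z+e_M$ in $L^1$.

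A minor slip: $A\neq0$ on distinct nonadjacent types. The Clebsch graph contains induced five-vertex paths between nonadjacent vertices, for example $0,s_1,s_1+s_2,s_1+s_2+s_3,s_2+s_3$. Lemma~A.3 gives $A=6a^3$ and $B=12a^3$ there, but the margin $B-A+\kappa=18a^3$ is still positive, so this does not affect the argument.
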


\begin{proof}
Use the sets $B_0,Z,M_1,\ldots,M_{16}$ constructed in the proof of Theorem
A.1; here $B_0$ denotes its bad-row set to avoid confusion with the constant
in the present statement. Write $F=C/E^{5/2}=R/5^{5/2}$ as in that proof.
Take $T$ to be the repaired Clebsch step graphon on $M=\bigcup_i M_i$, and
put isolates outside $M$. Before balancing the masses, the four-defect
inequality is
\[
 F(W_{\mathrm{Cl}})-F(W)
 \geq c_B\mu_{B_0}+c_ZD_Z+c_Ee_M+c_HV(T)^2.
\]
The three operations that produce $T$ cost at most
$2\mu_{B_0}+2D_Z+e_M$ in $L^1$. Consequently one may take
\[
 A=\frac1{5^{5/2}\min\{c_B/2,c_Z/2,c_E\}},\qquad
 B=\frac1{5^{5/2}c_H}.
\]
The mass-closeness estimate in that proof gives
$s_i=1/16+O(\|W-W_{\mathrm{Cl}}\|_1)$ uniformly in $i$. Decreasing the
fixed neighborhood ensures $s_i\geq1/32$.
\end{proof}

\begin{lemma}[Rounding without balancing]\label{lem:unbalanced-rounding}
Let $W_G$ be the probability graphon of a graph on $n$ vertices, and let
$U\in\mathcal T_{\mathrm{Cl}}$ have active masses $s_i\geq1/32$. Set
$\zeta=\|W_G-U\|_1$ and $\rho=5/32$. There is a Clebsch blow-up $H$, with
isolates, on the same vertex set, such that
\begin{align}
 \|W_G-W_H\|_1&\leq(1+4/\rho)\zeta,\tag{45}\label{eq:45}\\
 V(W_H)^2&\leq2V(U)^2+32\zeta^2/\rho^2.\tag{46}\label{eq:46}
\end{align}
All sixteen active classes of $H$ are nonempty if $\zeta<\rho/128$.
\end{lemma}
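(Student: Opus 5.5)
We follow the first half of the proof of Lemma~8.4 and stop before the balancing step. Let \(B_0\) be the isolate set of \(U\) and \(B_1,\ldots,B_{16}\) its active classes, with row functions \(r_0=0,r_1,\ldots,r_{16}\). Since every \(s_i\ge1/32\), a nonzero row has \(L^1\) norm at least \(5/32\). Distinct Clebsch rows differ on at least six classes, so they are at distance at least \(6/32\). Thus all pairwise row distances are at least \(\rho=5/32\), as in (41).

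For each vertex \(v\) with cell \(I_v\) of measure \(1/n\), define \(p_{v,i}\), \(e_{v,i}\) and \(e_v\) exactly as before, and assign \(v\) wholly to a type \(i(v)\) minimizing \(e_{v,i}\). The dichotomy argument applies verbatim:
\[
 1-p_{v,i(v)}\le\frac{2e_v}{\rho},\qquad
 M=\frac1n\sum_v\bigl(1-p_{v,i(v)}\bigr)\le\frac{2\zeta}{\rho}.
\]
Let \(H\) be the Clebsch blow-up determined by these assignments, with no balancing. Changing types on mass \(M\) changes the kernel on a set of measure at most \(2M\), so
\[
 \|W_H-U\|_1\le2M\le\frac{4\zeta}{\rho}.
\]
The triangle inequality then gives (45).

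For (46), let \(t_i\) be the mass of the \(i\)th active class of \(H\) and put \(\delta_i=t_i-s_i\). Each unit of reassigned mass leaves one class and enters another, so \(\sum_{i=0}^{16}|\delta_i|\le2M\), and hence \(\sum_{i\ge1}\delta_i^2\le(2M)^2\). The variance is a squared Euclidean distance to the diagonal, so it is the squared norm of a projection \(P\). Therefore
\[
 V(W_H)^2=\|P(s+\delta)\|^2\le2\|Ps\|^2+2\|P\delta\|^2
 \le2V(U)^2+2\cdot4M^2\le2V(U)^2+\frac{32\zeta^2}{\rho^2}.
\]

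For nonemptiness, each class satisfies
\[
 t_i\ge s_i-2M\ge\frac1{32}-\frac{4\zeta}{\rho}.
\]
This is positive once \(\zeta<\rho/128\), and the bound \(\zeta<\rho/128\) is exactly \(4\zeta/\rho<1/32\). A class of positive mass contains a whole vertex, so all sixteen classes of \(H\) are nonempty.

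The only delicate point is the bound \(\sum|\delta_i|\le2M\) within the isolate-inclusive accounting, which we need for the \(\ell^2\) estimate. It follows because each reassigned vertex moves its mass \(1-p_{v,i(v)}\) out of other types into type \(i(v)\).
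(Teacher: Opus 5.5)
Your proposal is correct and follows essentially the same route as the paper's proof. Both assign each whole vertex to the nearest of the seventeen row prototypes and use the dichotomy argument of Lemma~8.4 to get $M\le 2\zeta/\rho$ and $\|W_H-U\|_1\le 2M$, then apply the projection estimate $\|Ps'\|_2^2\le 2\|Ps\|_2^2+2\|P(s'-s)\|_2^2$ with $\|s'-s\|_1\le 2M$; your inflow/outflow accounting simply makes explicit the changed-mass bound that the paper states without detail.
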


\begin{proof}
The seventeen row prototypes of $U$ are pairwise separated in $L^1$ by at
least $\rho$. Indeed, a nonzero Clebsch row meets five classes, while distinct
nonzero rows differ on at least six classes.

For each vertex cell $I_v$, write $p_{v,i}=n|I_v\cap A_i|$ and
$e_{v,i}=\|W_G(v,\cdot)-r_i\|_1$, including the isolated type $i=0$.
Assign the whole vertex to a type minimizing $e_{v,i}$. The argument of
Lemma 8.4 gives total changed type mass
\[
 M=\frac1n\sum_v(1-p_{v,i(v)})\leq\frac{2\zeta}{\rho}.
\]
The induced whole-vertex template $W_H$ differs from $U$ only on pairs
with at least one endpoint in this changed set. Therefore
$\|U-W_H\|_1\leq2M$, proving (45). No trimming is performed.

Let $s'$ be the vector of the sixteen rounded active masses. The changed
mass estimate implies $\|s'-s\|_1\leq2M$. Let $P$ be orthogonal projection
onto the sum-zero subspace of $\mathbb R^{16}$. Then
\[
 \|Ps'\|_2^2\leq2\|Ps\|_2^2+2\|P(s'-s)\|_2^2
 \leq2V(U)^2+8M^2,
\]
which is (46). Finally
$s'_i\geq s_i-2M\geq1/32-4\zeta/\rho>0$ under the stated smallness
assumption. Each $ns'_i$ is an integer because only whole vertices were
assigned. The vertex cells can be arbitrary measurable sets, so the argument
also applies after a graphon relabeling.
\end{proof}

\begin{theorem}[Linear structural stability]\label{thm:linear-stability}
There are absolute constants $K<\infty$ and $\varepsilon_0>0$ such that the
following holds. Suppose $G$ has $m>0$ edges and
\[
 I_5(G)\geq(R_\ast-\varepsilon)(m/5)^{5/2},\qquad
 0\leq\varepsilon\leq\varepsilon_0.
\]
Then a graph $H$ on the same vertex set, consisting of a Clebsch blow-up with
positive integer class sizes $n_1,\ldots,n_{16}$ and isolated vertices,
satisfies
\begin{equation}\tag{47}\label{eq:47}
 |E(G)\mathbin{\triangle}E(H)|+\sum_{i=1}^{16}(n_i-N/16)^2
 \leq K\varepsilon m,\qquad N=\sum_{i=1}^{16}n_i.
\end{equation}
In particular,
\begin{equation}\tag{48}\label{eq:48}
 \left|m-\frac{5N^2}{32}\right|\leq K'\varepsilon m,\qquad
 N=\sqrt{\frac{32m}5}+O(\varepsilon\sqrt m),
\end{equation}
and every class satisfies
\[
 n_i=(1+O(\sqrt\varepsilon))\sqrt{m/40}.
\]
All constants are absolute. There is no additive rounding term.
\end{theorem}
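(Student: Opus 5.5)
We follow the proof of Theorem~8.1, replacing the local quadratic distance estimate by Proposition~9.1 and the balanced rounding of Lemma~8.4 by the unbalanced rounding of Lemma~9.2. Give every vertex weight $1/\sqrt m$ and apply the peeling Lemma~8.2 with $\eta=\varepsilon/5^{5/2}$. This removes at most $\frac{4\varepsilon}{3c_\ast5^{5/2}}m$ edges, a cost already linear in $\varepsilon m$. It leaves a core $H_0$ with $n_0<2\sqrt m/c_\ast$ vertices, normalized minimum degree at least $c_\ast^2/2$, and ratio at least $R_\ast-\varepsilon$ by (38). Lemma~8.3 then shows that for $\varepsilon\le\varepsilon_0$ the graphon $W_{H_0}$, after relabeling, satisfies $\|W_{H_0}-W_{\mathrm{Cl}}\|_1<\delta$. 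Hence Proposition~9.1 applies with $D(W_{H_0})\le\varepsilon$. It produces $T\in\mathcal T_{\mathrm{Cl}}$ with active masses at least $1/32$, $\|W_{H_0}-T\|_1\le A\varepsilon$ and $V(T)^2\le B\varepsilon$.

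Next apply Lemma~9.2 with $U=T$ and $\zeta\le A\varepsilon$. For $\varepsilon_0$ small we have $\zeta<\rho/128$, so we obtain a Clebsch blow-up $H_1$ on $V(H_0)$ with nonempty classes. The edit count inside the core is
\[
 \frac{n_0^2}{2}\|W_{H_0}-W_{H_1}\|_1\le\frac{n_0^2}{2}(1+4/\rho)A\varepsilon=O(\varepsilon m),
\]
using $n_0^2=O(m)$. For the imbalance, the class sizes are $n_i=n_0s'_i$, so
\[
 \sum_i(n_i-N/16)^2=n_0^2V(W_{H_1})^2\le n_0^2\bigl(2B\varepsilon+32A^2\varepsilon^2/\rho^2\bigr)=O(\varepsilon m).
\]
Extending $H_1$ by making the peeled vertices isolated gives $H$ and proves (47). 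The only care needed is the possibly nonattained infimum in the relabeling. As in the remark after Lemma~8.4, we take comparison graphons approaching the infimum and use the finiteness of the possible edit counts on a fixed vertex set.

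For (48), first compute the edge count of $H$: it is $e(H)=\sum_{ij\in E(\mathrm{Cl})}n_in_j$. Write $n_i=N/16+x_i$ with $\sum x_i=0$. Since the Clebsch graph is $5$-regular, the linear terms vanish, and
\[
 e(H)=\frac{40N^2}{256}+\sum_{ij\in E(\mathrm{Cl})}x_ix_j,\qquad \Bigl|\sum_{ij\in E(\mathrm{Cl})}x_ix_j\Bigr|\le\frac{5}{2}\sum_i x_i^2=O(\varepsilon m).
\]
Note that $40N^2/256=5N^2/32$. Moreover $|m-e(H)|\le|E(G)\triangle E(H)|=O(\varepsilon m)$, which gives the first part of (48). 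Solving for $N$, $\sqrt{5N^2/32}=\sqrt m\,(1+O(\varepsilon))$, so $N=\sqrt{32m/5}+O(\varepsilon\sqrt m)$. Finally, $|n_i-N/16|\le\sqrt{K\varepsilon m}=O(\sqrt\varepsilon)\sqrt{m/40}$, and $N/16=\sqrt{m/40}\,(1+O(\varepsilon))$, which gives the per-class estimate.

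The main obstacle lies in Proposition~9.1, which we may assume. The remaining point is the exact bookkeeping of the two scales. The edit count uses $\zeta$ linearly, whereas the variance bound in (46) carries only $\zeta^2=O(\varepsilon^2)$ beyond $V(T)^2$, so both defects stay $O(\varepsilon m)$. We must also check that the constants, including the thresholds for entering the neighborhood, are absolute. This holds because the minimum-degree bound $c_\ast^2/2$ is absolute.
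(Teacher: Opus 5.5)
Your proposal is correct and follows the paper's proof essentially step for step. The steps are weighted peeling, then the positive-degree compactness lemma to enter the neighborhood of Proposition~9.1, then the unbalanced rounding of Lemma~9.2, with the linear edit bound and the quadratic variance bound each scaled by $n_0^2<4m/c_\ast^2$, and finally the $5$-regularity expansion $e(H)=5N^2/32+\frac12 h^TA_{\mathrm{Cl}}h$ for (48). Your remark about a nonattained infimum is harmless but unnecessary here: once a single relabeling within $\delta$ of $W_{\mathrm{Cl}}$ is fixed, Proposition~9.1 constructs the template $T$ explicitly.
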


\begin{proof}
Give every vertex weight $1/\sqrt m$. The weighted peeling lemma removes
at most $4\varepsilon m/(3R_\ast)$ edges, and leaves a core $G'$ with
$n<2\sqrt m/c_\ast$ vertices, normalized minimum degree at least
$c_\ast^2/2$, and ratio at least $R_\ast-\varepsilon$. By the positive-degree
compactness lemma, after relabeling its probability graphon lies in the
neighborhood of Proposition 9.1 when $\varepsilon_0$ is small enough.
Obtain the template $U$ there. It satisfies
\[
 \zeta\leq A\varepsilon,\qquad V(U)^2\leq B\varepsilon.
\]
Decrease $\varepsilon_0$ further so that $A\varepsilon_0<\rho/128$ and
$\varepsilon_0\leq1$. Lemma 9.2 gives whole-vertex classes with
\[
 \begin{split}
 \|W_{G'}-W_H\|_1&\leq(1+4/\rho)A\varepsilon,\\
 V(W_H)^2&\leq B'\varepsilon,\qquad B'=2B+32A^2/\rho^2.
 \end{split}
\]
The number of edge edits inside the core is $n^2/2$ times its $L^1$
distance. Also $\sum_i(n_i-N/16)^2=n^2V(W_H)^2$. Using
$n^2<4m/c_\ast^2$ and putting all peeled vertices into the isolated class
proves the two estimates separately with constants
\[
 K_E=\frac4{3R_\ast}+\frac{2(1+4/\rho)A}{c_\ast^2},\qquad
 K_V=\frac{4B'}{c_\ast^2}.
\]
Take $K=K_E+K_V$ in (47).

For completeness, let $A_{\mathrm{Cl}}$ be the Clebsch adjacency matrix
and $h_i=n_i-N/16$. Since the graph is five-regular and $\sum_i h_i=0$,
\[
 e(H)=40(N/16)^2+\frac12 h^TA_{\mathrm{Cl}}h
 =\frac{5N^2}{32}+\frac12 h^TA_{\mathrm{Cl}}h.
\]
The elementary bound $|h^TA_{\mathrm{Cl}}h|\leq5\|h\|_2^2$ follows from
$2|h_i h_j|\leq h_i^2+h_j^2$ summed over the edges. Hence one may take
$K'=K_E+(5/2)K_V$ in the first estimate of (48). For smaller
$\varepsilon_0$, this places $N$ between fixed positive multiples of
$\sqrt m$. Factoring the difference of squares yields the second estimate.
Finally $|h_i|\leq\|h\|_2=O(\sqrt{\varepsilon m})$, which gives the
class-size conclusion.
\end{proof}

\begin{proposition}[Optimality of linear structural stability]
\label{prop:linear-optimality}
The factor $\varepsilon$ in the edit-distance conclusion of Theorem 9.3
cannot be replaced uniformly by $o(\varepsilon)$.
\end{proposition}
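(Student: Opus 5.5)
The plan is to exhibit an explicit sequence of graphs $G_k$ whose deficit $\varepsilon_k$ tends to zero with $\varepsilon_k m_k$ bounded, while every Clebsch blow-up with isolates on the same vertex set is at edit distance at least one from $G_k$. Then $|E(G_k)\mathbin{\triangle}E(H)|\ge 1\ge c\,\varepsilon_k m_k$ for an absolute $c>0$. This rules out any bound of the form $o(\varepsilon)m$ uniform over finite graphs.

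\emph{Construction.} For $k\ge2$, let $G_k$ be the balanced Clebsch blow-up with $k$ vertices per class, with a single edge $xy$ removed, where $x\in V_i$, $y\in V_j$ and $ij$ is a Clebsch edge. Then $m_k=40k^2-1$. The edge $ij$ of the Clebsch graph lies in $192\cdot5/40=24$ pentagons. An induced pentagon of the blow-up uses at most one vertex per class, since vertices in one class are false twins. Hence exactly $24k^3$ induced pentagons of the blow-up contain the pair $\{x,y\}$, and only these are destroyed. Deleting the edge might create new induced pentagons, but that only increases $I_5$, so
\[
 I_5(G_k)\ge192k^5-24k^3 .
\]
Since $m_k/5\le 8k^2$ and $R_*(8k^2)^{5/2}=192k^5$, we get
\[
 0\le\varepsilon_k:=R_*-\frac{I_5(G_k)}{(m_k/5)^{5/2}}
 \le R_*-\frac{192k^5-24k^3}{(8k^2)^{5/2}}=\frac{R_*}{8k^2}.
\]
Nonnegativity comes from the sharp bound. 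Hence $\varepsilon_k\to0$ and $\varepsilon_k m_k\le 5R_*$.

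\emph{$G_k$ is not a Clebsch blow-up with isolates.} Suppose $H$ is a Clebsch blow-up (possibly unbalanced) with isolates and $E(H)=E(G_k)$. Every vertex of $G_k$ has degree at least $5k-1>0$, so $H$ has no isolates. Since the Clebsch graph has no two vertices with equal neighbourhoods (its generators are distinct), the twin classes of $H$ are exactly its nonempty blow-up classes, so $G_k$ would have at most $16$ twin classes. However:
\begin{itemize}
 \item in $G_k$ the vertex $x$ has a different neighbourhood from the other vertices of $V_i$, which exist because $k\ge2$, and likewise $y$ within $V_j$;
 \item vertices in different original classes have different neighbourhoods, because the Clebsch rows differ on at least six classes and only one edge was removed.
\end{itemize}
So $G_k$ has at least $18$ twin classes, a contradiction. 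Therefore every admissible $H$ satisfies $|E(G_k)\mathbin{\triangle}E(H)|\ge1\ge\varepsilon_k m_k/(5R_*)$.

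For all large $k$ we have $\varepsilon_k\le\varepsilon_0$, so $G_k$ lies in the regime of Theorem~9.3. If a function $\phi(\varepsilon)=o(\varepsilon)$ could replace $K\varepsilon$ in the edit conclusion, then
\[
 1\le\phi(\varepsilon_k)m_k=\frac{\phi(\varepsilon_k)}{\varepsilon_k}\,\varepsilon_k m_k\le 5R_*\,\frac{\phi(\varepsilon_k)}{\varepsilon_k}\to0,
\]
which is impossible. One case needs separate care: if $\varepsilon_k=0$ for some $k$, equality would hold, and the equality classification would force $G_k$ to be a Clebsch blow-up, contradicting the twin count. So $\varepsilon_k>0$ and the ratio $\phi(\varepsilon_k)/\varepsilon_k$ is well defined.

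I expect the main point needing care to be the exact inequality $I_5(G_k)\ge192k^5-24k^3$, in particular the claim that exactly $24k^3$ induced pentagons contain the pair $\{x,y\}$. This rests on the no-two-vertices-per-class observation and on the count of $24$ pentagons through each Clebsch edge. Everything else is bookkeeping.
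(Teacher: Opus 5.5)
Your argument is correct, but it uses a different example from the paper.

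\textbf{The paper's route.} The paper adds one edge inside a class of the balanced blow-up, so $m_k=40k^2+1$. The two endpoints become true twins, so $I_5=192k^5$ exactly. Exclusion from the family is then immediate: the new graph contains a triangle, while every Clebsch blow-up with isolates is triangle-free. The proof is therefore a two-line computation giving $\varepsilon_k m_k\to\frac52R_*$.

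\textbf{Your route.} You delete a template edge, so $m_k=40k^2-1$ and the graph stays triangle-free. The triangle obstruction is unavailable, so you need a one-sided lower bound on $I_5$ and the neighbourhood count: $18$ distinct neighbourhoods in $G_k$ against at most $16$ among non-isolated vertices of any blow-up. Both steps are sound, with two small imprecisions.
\begin{itemize}
\item Your claim that the twin classes of $H$ are exactly its nonempty blow-up classes can fail if some classes are empty. You only use the bound of at most $16$ (or $17$) distinct neighbourhoods, and that always holds.
\item The quantity $192\cdot5/40$ is only the average number of pentagons per Clebsch edge. Exactness needs edge-transitivity, which follows from the action of $\mathbb F_2^4\rtimes S_5$, or the direct count $4!$. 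In any case, any $O(k^3)$ bound on the destroyed pentagons already gives $\varepsilon_k m_k=O(1)$, so the exact count is not essential.
\end{itemize}

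\textbf{Comparison.} The paper's example buys brevity and an exact pentagon count. Yours buys a slightly stronger statement: the linear rate in Theorem 9.3 cannot be improved even when both $G$ and the comparison family are restricted to triangle-free graphs. The paper's triangle-based example does not show this.
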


\begin{proof}
Start with a balanced Clebsch blow-up of common class size $k\geq2$, and
add a single edge between two vertices in one class. The two endpoints are
now true twins. An induced pentagon cannot contain both, and all other
induced pentagons are unchanged. Thus the new graph has
\[
 m_k=40k^2+1,\qquad I_5=192k^5,\qquad
 \varepsilon_k=R_\ast\left[1-\left(\frac{40k^2}{40k^2+1}\right)^{5/2}\right].
\]
Its distance to the family of Clebsch blow-ups and isolates is exactly one:
deleting the added edge suffices, and the graph contains a triangle whereas
every graph in that family is triangle-free. Furthermore $\varepsilon_k\to0$
and $\varepsilon_km_k\to(5/2)R_\ast$. A bound of the form
$o(\varepsilon_k)m_k$ would tend to zero, contradicting this integer
distance of one.
\end{proof}

\subsection{A two-sided geometric description of the deficit}
For a graph $G$ with $m>0$ edges, write
\[
 \Delta(G)=c_\ast m^{5/2}-I_5(G).
\]
Let $\mathcal B_m(G)$ consist of the Clebsch blow-ups with isolates on
$V(G)$ whose sixteen class sizes obey
\[
 \frac12\sqrt{m/40}\leq n_i\leq2\sqrt{m/40}.
\]
For such a template put $N=\sum_i n_i$ and define
\[
 J_G(B)=|E(G)\mathbin{\triangle}E(B)|+\sum_i(n_i-N/16)^2.
\]

\begin{proposition}[Two-sided structural energy]\label{prop:structural-energy}
For absolute constants $c,C>0$, every graph sufficiently close to the sharp
ratio satisfies
\begin{equation}\tag{49}\label{eq:49}
 c\,m^{3/2}\min_{B\in\mathcal B_m(G)}J_G(B)
 \leq\Delta(G)\leq
 C\,m^{3/2}\min_{B\in\mathcal B_m(G)}J_G(B).
\end{equation}
In particular $\mathcal B_m(G)$ is nonempty in this range. The upper
estimate holds whenever $\mathcal B_m(G)$ is nonempty, without a separate
near-extremality assumption.
\end{proposition}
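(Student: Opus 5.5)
The lower estimate in (49) follows from Theorem~9.3. The upper estimate comes from a direct perturbative count around the balanced Clebsch blow-up.

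\emph{Lower estimate.} Write \(\varepsilon=R_*-I_5(G)/(m/5)^{5/2}\). Then
\[
 \Delta(G)=(m/5)^{5/2}\varepsilon=\Theta(\varepsilon m^{5/2}).
\]
Choose ``sufficiently close to the sharp ratio'' to mean \(\varepsilon\le\varepsilon_1\), with \(\varepsilon_1\le\varepsilon_0\) small.
\begin{itemize}
\item Theorem~9.3 supplies a template \(H\) with \(J_G(H)\le K\varepsilon m\).
\item Its class-size conclusion \(n_i=(1+O(\sqrt\varepsilon))\sqrt{m/40}\) places \(H\) in \(\mathcal B_m(G)\) once \(\varepsilon_1\) is small. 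In particular \(\mathcal B_m(G)\) is nonempty.
\item Hence \(\min J_G\le K\varepsilon m=K(5^{5/2})\Delta(G)/m^{3/2}\), which is the left inequality with \(c=1/(5^{5/2}K)\).
\end{itemize}

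\emph{Upper estimate.} Fix any \(B\in\mathcal B_m(G)\) with class sizes \(n_i\) and \(N=\sum n_i\). Write \(e_B\) for its edge count, \(k_0=N/16\), and \(h_i=n_i-k_0\). I compare three quantities:
\[
 c_*m^{5/2}\;\text{ vs. }\;c_*e_B^{5/2}\;\text{ vs. }\;I_5(B)\;\text{ vs. }\;I_5(G).
\]

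First, \(B\) against a balanced template. Expanding the pentagon polynomial \(\sum_{\text{pentagons}}\prod n_v\) around \(k_0\) gives
\[
 I_5(B)=192k_0^5+60k_0^4\textstyle\sum h_i+k_0^3Q(h)+O(k_0^2\|h\|^3).
\]
The linear term vanishes because \(\sum h_i=0\). All higher terms are \(O(k_0^3\|h\|_2^2)\), since \(|h_i|\le 2k_0\) on \(\mathcal B_m\). Likewise
\[
 e_B=40k_0^2+\tfrac12h^TA_{\mathrm{Cl}}h .
\]
So \(c_*e_B^{5/2}-I_5(B)=O(k_0^3\|h\|^2)=O(m^{3/2}\|h\|^2)\), using the identity \(c_*(40k_0^2)^{5/2}=192k_0^5\). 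Here I only need a crude two-sided bound, not a sign.

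Second, the edge edits. Let \(t=|E(G)\triangle E(B)|\).
\begin{itemize}
\item One edit changes the induced pentagon count by at most the number of pentagons through a fixed pair of vertices. In \(G\) this could be large, so I order the edits: first perform the deletions and additions on \(B\) one at a time.
\item Every intermediate graph lies within \(t\) edits of \(B\). The number of 5-sets through a pair that are induced pentagons is at most the number of 3-paths linking the pair in that graph.
\item In \(B\) this is \(O(k_0^3)\). It can grow by edits, since the degrees of the two endpoints can grow.
\end{itemize}
A cleaner route is a link bound. The pentagons through an edge or nonedge \(xy\) number at most \(d(x)d(y)\cdot\max\) codegree-type counts, bounded by \(\min(d(x)d(y)e,\dots)\). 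I would instead use the weighted link inequality \(C_i\le\frac14Ed_i^2\) per vertex, together with the form of \(A,B\) from Section~2. Then \(|I_5(G)-I_5(B)|\le t\cdot O(m^{3/2})+O(t^2m)+\dots\), with the polynomial in \(t\) controlled because \(t\le m+e_B=O(m)\). If \(t\ge\eta m\) for a small fixed \(\eta\), the trivial bound \(\Delta(G)\le c_*m^{5/2}\le(c_*/\eta)m^{3/2}t\) suffices. This case split absorbs the nonlinear terms.

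Third, the edge counts. \(|m-e_B|\le t\), so
\[
 c_*(m^{5/2}-e_B^{5/2})\le O(m^{3/2})\,t .
\]
Combining the three comparisons gives \(\Delta(G)\le Cm^{3/2}J_G(B)\). This holds for every \(B\in\mathcal B_m(G)\) with no near-extremality hypothesis, and minimizing over \(B\) gives the right inequality.

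\emph{Main obstacle.} The step I expect to be hardest is the per-edit bound \(O(m^{3/2})\) on the change in \(I_5\) while \(t<\eta m\). The worry is that edits concentrated at a few vertices create high degrees. My plan is to bound the change from one edit \(xy\) by the induced-\(P_4\)-type completion count \(A(x,y)\) or \(B(x,y)\). I would then bound the sum of these over all edited pairs by Cauchy--Schwarz: \(\sum_{xy}d(x)d(y)\cdot O(\sqrt m)\le O(m^{3/2})t\) up to degree-concentration terms. Those terms are quadratic in the edits and are absorbed using \(t<\eta m\).
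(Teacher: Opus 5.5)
Your lower estimate is the paper's argument. In the upper estimate, two of your comparisons are also correct and essentially match the paper:
\begin{itemize}
\item the bound $c_*e_B^{5/2}-I_5(B)=O(m^{3/2}V)$, which only needs $\|h\|_\infty=O(k_0)$ on $\mathcal B_m(G)$ (the constant is closer to $3k_0$ than $2k_0$, but that is immaterial);
\item the bound $c_*(m^{5/2}-e_B^{5/2})\le\frac52c_*m^{3/2}t$, using $e_B\ge m/4$.
\end{itemize}

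The gap is the second comparison, which you flag yourself. For $t<\eta m$ you never prove $I_5(B)-I_5(G)=O(m^{3/2}t)$. The sequential-edit argument does not work as described. After some additions, an intermediate graph can have vertices of degree of order $\eta m\gg\sqrt m$ and can contain pentagons absent from $B$. So the number of induced pentagons through the next edited pair is not $O(m^{3/2})$. The Cauchy--Schwarz sketch with ``degree-concentration terms'' is not carried out either. The third completion vertex is also affected by edits, and the ``$\dots$'' in your bound is carrying unproved content.

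The missing observation is that you only need a one-sided bound. Since $\Delta(G)=c_*m^{5/2}-I_5(G)$, pentagons created by the edits only help. You only have to count pentagons of $B$ that are destroyed.
\begin{itemize}
\item A five-set that induces a pentagon in $B$ and contains none of the $t$ modified pairs induces the same graph in $G$.
\item Every pentagon of $B$ lies among its $N$ active vertices, so a modified pair meeting an isolated vertex of $B$ destroys nothing.
\item A pair of active vertices lies in at most $\binom{N-2}{3}\le N^3/6$ five-subsets of the active set.
\end{itemize}
Hence $I_5(G)\ge I_5(B)-tN^3/6$. Since $N\le 32\sqrt{m/40}$ on $\mathcal B_m(G)$, this gives $I_5(B)-I_5(G)=O(m^{3/2}t)$ directly. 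It needs no control of degrees in $G$, no ordering of the edits, and no case split on $t$. This is exactly the paper's inequality (50). With it, your three comparisons combine to give the upper bound in (49) for every $B\in\mathcal B_m(G)$, with no near-extremality assumption.
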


\begin{proof}
For a sufficiently small fixed normalized deficit, the class-size conclusion
of Theorem 9.3 places its template in $\mathcal B_m(G)$. Substituting
$\varepsilon=5^{5/2}\Delta(G)/m^{5/2}$ in that theorem proves the lower
estimate in (49).

For the other direction, fix $B\in\mathcal B_m(G)$ and put
\[
 t=|E(G)\mathbin{\triangle}E(B)|,\qquad
 V=\sum_i(n_i-N/16)^2,\qquad m_B=e(B).
\]
An original induced pentagon of $B$ is still induced in $G$ unless one of
its ten pairs is modified. Each modified pair belongs to at most
$\binom{N-2}3\leq N^3/6$ original five-sets; pairs meeting an isolated
vertex of $B$ belong to none. Thus
\begin{equation}\tag{50}\label{eq:50}
 I_5(G)\geq I_5(B)-tN^3/6.
\end{equation}
This estimate depends on the number of active template vertices, not on the
possibly much larger number of vertices of $G$.

Consider the weighted-template defect
\[
 Q(x)=c_\ast E(x)^{5/2}-P(x).
\]
At every positive constant vector $b\mathbf1$ we have $Q=0$ and
$\nabla Q=0$. The latter follows either by differentiating the sharp equality
along class weights, or directly from $E(\mathbf1)=40$, $P(\mathbf1)=192$,
$\nabla E(\mathbf1)=5\mathbf1$ and $\nabla P(\mathbf1)=60\mathbf1$.
Set $b=N/16$ and $h_i=n_i-b$. Every point of the line segment
$b\mathbf1+uh$, $0\leq u\leq1$, has coordinates between fixed positive
multiples of $\sqrt m$. On this segment the Hessian of $Q$ is
$O(m^{3/2})$: the polynomial $P$ has degree five, and the same scaling
follows directly by differentiating $E^{5/2}$, with $E$ bounded above and
below by fixed positive multiples of $m$. Taylor's formula therefore gives
\[
 0\leq c_\ast m_B^{5/2}-I_5(B)=Q(b\mathbf1+h)
 \leq C_0m^{3/2}\|h\|_2^2=C_0m^{3/2}V.
\]
Also $m_B$ and $m$ are within fixed positive constant factors, and
$|m_B-m|\leq t$, so
\[
 |m^{5/2}-m_B^{5/2}|\leq C_1m^{3/2}t.
\]
Since $N=O(\sqrt m)$, combine these estimates with (50) to obtain
$\Delta(G)\leq Cm^{3/2}(t+V)$. Taking the minimum completes the proof.
There are finitely many vertex partitions, so the minimum is attained
whenever the family is nonempty.
\end{proof}

\section{The arithmetic size of the finite-edge deficit}
\label{sec:arithmetic-size}

The error term in Corollary 1.2 cannot be improved uniformly to $o(m^2)$.
More precisely, its order is governed by the distance of an effective vertex
count from the integers. This conclusion uses the linear stability theorem
for unbalanced Clebsch blow-ups, including its squared class-size estimate.

For an integer $N\geq0$, write $N=16k+r$, with $0\leq r<16$, and put
\[
 v(N)=\frac{r(16-r)}{16},\qquad
 \Psi(m)=\min_{N\in\mathbb Z_{\geq0}}
 \left(\left|m-\frac5{32}N^2\right|+v(N)\right).
\]
The minimum is attained, and $0\leq v(N)\leq4$. As before, let
$\rho(C_5,m)$ denote the maximum number of induced pentagons among graphs
with exactly $m$ edges, and set
\[
 \Delta(m)=c_\ast m^{5/2}-\rho(C_5,m).
\]

\begin{theorem}[Arithmetic deficit law]\label{thm:arithmetic-deficit}
There exist absolute constants $a_0,b_0>0$ and an integer $m_0$ such that,
for every integer $m\geq m_0$,
\begin{equation}\tag{51}\label{eq:51}
 a_0m^{3/2}\Psi(m)\leq\Delta(m)\leq b_0m^{3/2}\Psi(m).
\end{equation}
In particular, $\Psi(m)=0$ exactly when $m=40k^2$ for an integer $k\geq0$.
For all other sufficiently large integers $m$,
\begin{equation}\tag{52}\label{eq:52}
 \Delta(m)\asymp m^2\operatorname{dist}
 \left(\sqrt{\frac{32m}5},\mathbb Z\right)+m^{3/2},
\end{equation}
where the two comparison constants are absolute.
\end{theorem}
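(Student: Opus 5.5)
We prove the two inequalities in (51) separately: the lower bound comes from Proposition 9.5 and Theorem 9.3, and the upper bound from an explicit construction. Throughout, $N$ is the active vertex count of a Clebsch blow-up, $h_i=n_i-N/16$, and $V=\|h\|_2^2$. For a fixed $N=16k+r$, the integer vector with sum $N$ nearest to the constant vector has $r$ entries equal to $k+1$ and $16-r$ equal to $k$. Hence the minimum of $\sum_i(n_i-N/16)^2$ over integer class sizes with sum $N$ is exactly $v(N)$. This identity is the bridge between the structural energy $J_G$ and $\Psi$.

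\emph{Lower bound.} First we dispose of graphs far from extremal. If an extremal graph $G$ with $m$ edges had normalized deficit larger than $\varepsilon_0$, then $\Delta(m)\ge c\,m^{5/2}$. Since $\Psi(m)=O(\sqrt m)$, because consecutive values of $\tfrac5{32}N^2$ are $O(\sqrt m)$ apart, this already exceeds $a_0m^{3/2}\Psi(m)$. So we may assume near-extremality and apply Theorem 9.3, which gives a template $H$ with $t=|E(G)\triangle E(H)|$ and $t+V\le K\varepsilon m$. Then $|m-e(H)|\le t$ and, as in the proof of (48),
\[
 e(H)=\tfrac{5}{32}N^2+\tfrac12h^TA_{\mathrm{Cl}}h ,
\]
so $|m-\tfrac5{32}N^2|\le t+\tfrac52V$. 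Combined with $V\ge v(N)$, this yields $\Psi(m)\le t+\tfrac72V\le\tfrac72 J_G(H)$. The lower estimate in (49) then gives $\Delta(m)=\Delta(G)\ge c\,m^{3/2}\min J_G\ge a_0m^{3/2}\Psi(m)$. One small point must be checked: the minimizing template in (49) need not be the one from Theorem 9.3. This causes no trouble, because the inequality $\Psi\le\tfrac72J_G(B)$ holds for every $B\in\mathcal B_m(G)$.

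\emph{Upper bound.} Let $N$ attain $\Psi(m)$; since $\Psi(m)=O(\sqrt m)$, we have $N\asymp\sqrt m$. Take the Clebsch blow-up with the $v(N)$-optimal class sizes ($r$ classes of size $k+1$). Its edge count $e$ satisfies $|e-\tfrac5{32}N^2|\le\tfrac52v(N)=O(1)$, so $|m-e|\le\Psi(m)+O(1)$. We then correct the edge count to exactly $m$. If $e<m$, add $m-e$ edges as a disjoint matching; this destroys no pentagon. If $e>m$, delete $e-m$ edges inside the blow-up; each deletion destroys at most $O(N^3)=O(m^{3/2})$ pentagons (deletions may create new induced pentagons, but only the lower bound on the count matters). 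The resulting graph $G$ has $\mathcal B_m(G)$ nonempty and $J_G(B)\le|m-e|+v(N)=O(\Psi(m)+1)$. The upper estimate of Proposition 9.5, valid without near-extremality, gives
\[
 \Delta(m)\le\Delta(G)\le C\,m^{3/2}\bigl(\Psi(m)+O(1)\bigr).
\]
The $O(1)$ must then be absorbed, which requires $\Psi(m)\ge c>0$ whenever $m\ne40k^2$. This holds because $32(m-\tfrac5{32}N^2)=32m-5N^2$ is an integer, so $|m-\tfrac5{32}N^2|\ge1/32$ unless $32m=5N^2$. In that case $N$ is divisible by $40$, hence $16\mid N$, $v(N)=0$ and $m=40k^2$. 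If instead $v(N)\neq0$, then $v(N)\ge15/16$. This discreteness argument also proves the characterization $\Psi(m)=0\iff m=40k^2$. At $m=40k^2$ the balanced blow-up gives $\Delta=0$ directly.

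\emph{Derivation of (52).} We show $\Psi(m)\asymp\sqrt m\,\operatorname{dist}(\sqrt{32m/5},\mathbb Z)+1$ for $m$ not of the form $40k^2$. For the lower direction, let $x=\sqrt{32m/5}$ and note
\[
 |m-\tfrac5{32}N^2|=\tfrac5{32}|x-N|(x+N)\gtrsim\sqrt m\,|x-N| .
\]
Together with $\Psi\ge c$, this gives the lower comparison. For the upper direction, take $N$ the nearest integer to $x$, so that $|m-\tfrac5{32}N^2|\lesssim\sqrt m\operatorname{dist}+1$ and $v(N)\le4$. The main obstacle is the lower bound's reduction to Theorem 9.3, specifically the careful bookkeeping of the $O(1)$ terms and the far-from-extremal case; the remaining steps are routine.
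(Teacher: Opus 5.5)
Your argument is correct and is essentially the paper's proof. The lower bound comes from the linear structural stability template; you route it through the lower half of (49), which is itself derived from Theorem 9.3. The upper bound uses the nearly equal Clebsch blow-up corrected by a matching or by deletions, and you cite the upper half of (49) where the paper repeats that Taylor estimate inline. Your derivation of (52) uses the same difference-of-squares estimate as the paper.

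One step in your discreteness argument is false as written. The equation $32m=5N^2$ forces only $5\mid m$ and $8\mid N$, that is $m=10(N/8)^2$; it does not force $40\mid N$. For example, take $m=10n^2$ with $n$ odd and $N=8n$: the first term of $\Psi$ vanishes, but $v(N)=4$.

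Your conclusion survives with the paper's case split:
\begin{itemize}
\item if $16\mid N$, then $\tfrac5{32}N^2=40(N/16)^2$ is an integer, so $|m-\tfrac5{32}N^2|\ge1$ whenever $m$ is not of the form $40k^2$;
\item otherwise $v(N)\ge 15/16$.
\end{itemize}
Hence $\Psi(m)\ge 15/16$ off the equality sequence.

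Two simplifications are available. First, the far-from-extremal case is vacuous for large $m$, since the matching construction already gives $\Delta(m)=O(m^2)$. Second, no additive $O(1)$ needs absorbing, because
$|m-e|+v(N)\le|m-\tfrac5{32}N^2|+\tfrac72 v(N)\le\tfrac72\Psi(m)$.
This also covers $m=40k^2$ without a separate case.
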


\begin{proof}
We first record the role of the integer class sizes. If
$n_1,\ldots,n_{16}$ are nonnegative integers with sum $N$, then
\begin{equation}\tag{53}\label{eq:53}
 V:=\sum_{i=1}^{16}(n_i-N/16)^2\geq v(N).
\end{equation}
Indeed, whenever two entries differ by at least two, moving one unit from
the larger to the smaller decreases the sum of their squares. The minimum
is therefore attained by $r$ entries equal to $k+1$ and $16-r$ entries equal
to $k$; its value is $v(N)$.

Let $A$ be the adjacency matrix of $\mathrm{Cl}$, and write
$n=(N/16)\mathbf1+h$, where $\sum_i h_i=0$. Since $A\mathbf1=5\mathbf1$,
the edge count of this blow-up is
\begin{equation}\tag{54}\label{eq:54}
 E(n)=\frac12n^TAn=\frac5{32}N^2+\frac12h^TAh.
\end{equation}
The operator norm of $A$ is at most five. Consequently
\begin{equation}\tag{55}\label{eq:55}
 \left|E(n)-\frac5{32}N^2\right|\leq\frac52 V.
\end{equation}
For the lower bound in (51), choose an extremal graph $G$ with $m$ edges.
Such a graph exists, since discarding isolates leaves at most $2m$ vertices.
The earlier lower construction gives $\Delta(m)=O(m^2)$. Hence its
normalized deficit
\[
 \varepsilon=R_\ast-\frac{I_5(G)}{(m/5)^{5/2}}
 =\frac{5^{5/2}\Delta(m)}{m^{5/2}}
\]
tends to zero as $m\to\infty$. Apply Theorem 9.3. It supplies a Clebsch
blow-up $H$, together with isolates, on the same vertex set, such that its
integer class sizes $n_i$, total active size $N$, squared deviation $V$, and
edge-edit distance $L$ satisfy
\[
 L+V\leq K\varepsilon m=K5^{5/2}\frac{\Delta(m)}{m^{3/2}}.
\]
Here and below a constant may be increased without changing its dependence.
In particular $|m-E(n)|\leq L$. Equations (53) and (55) give
\[
 \Psi(m)\leq\left|m-\frac5{32}N^2\right|+v(N)
 \leq L+\frac72V\leq K'\frac{\Delta(m)}{m^{3/2}}.
\]
This proves the first inequality.

For the upper bound, put $t=\sqrt{32m/5}$. An integer $N_0$ nearest to $t$
satisfies
\[
 \left|m-\frac5{32}N_0^2\right|
 \leq\frac5{32}\left(t+\frac14\right).
\]
It follows that $\Psi(m)=O(\sqrt m)$. Any minimizing $N$ therefore
satisfies $N=\Theta(\sqrt m)$ and tends to infinity. Choose
$n_1,\ldots,n_{16}$ as equally as possible, so that $V=v(N)$, and let $H$
be the resulting Clebsch blow-up. With $e=E(n)$, (55) gives
\begin{equation}\tag{56}\label{eq:56}
 |e-m|\leq\left|m-\frac5{32}N^2\right|+\frac52v(N)
 \leq\frac52\Psi(m).
\end{equation}
We also need the second-order cost of the integer balancing. For positive
real class sizes $x$, define
\[
 Q(x)=c_\ast E(x)^{5/2}-P(x).
\]
The Clebsch counts give $Q(\mathbf1)=0$ and $\nabla Q(\mathbf1)=0$. The
function $Q$ has a bounded Hessian in a fixed neighborhood of $\mathbf1$.
Write $q=N/16$ and $n=q\mathbf1+h$, where $|h_i|\leq1$ and
$\|h\|_2^2=v(N)$. Homogeneity and Taylor's theorem yield, uniformly for
large $N$,
\begin{equation}\tag{57}\label{eq:57}
 0\leq Q(n)=q^5Q(\mathbf1+h/q)
 \leq Cq^3\|h\|_2^2\leq C'N^3v(N).
\end{equation}
The nonnegativity follows from the sharp weighted bound. If $v(N)=0$, the
same equation holds exactly, with both sides zero.

If $e\leq m$, add a disjoint matching with $m-e$ edges. If $e>m$, delete
any $e-m$ edges of $H$. A single edge deletion can destroy at most
$\binom{N-2}3$ induced pentagons, since every affected pentagon must contain
both endpoints of that edge. Thus the resulting graph $G_m$ has exactly
$m$ edges and
\[
 I_5(G_m)\geq P(n)-|e-m|\binom{N-2}3.
\]
Since $e=m+O(\sqrt m)$ and $N=\Theta(\sqrt m)$, the mean value theorem,
(56), and (57) imply
\[
 \begin{split}
 \Delta(m)&\leq c_\ast m^{5/2}-I_5(G_m)\\
 &\leq c_\ast|m^{5/2}-e^{5/2}|+Q(n)+|e-m|\binom{N-2}3\\
 &\leq C''m^{3/2}\bigl(|e-m|+v(N)\bigr)
 \leq b_0m^{3/2}\Psi(m).
 \end{split}
\]
This proves (51).

Finally, $\Psi(m)=0$ requires $v(N)=0$, hence $16\mid N$, and then
$m=5N^2/32=40(N/16)^2$. The converse is immediate. If $m$ is not of this
form, every term in the minimum is at least $15/16$: for $16\mid N$ the
nonzero integer $|m-5N^2/32|$ is at least one, and otherwise
$v(N)\geq15/16$. Let $d=\operatorname{dist}(t,\mathbb Z)$. For every
$N\geq0$,
\[
 \left|m-\frac5{32}N^2\right|=\frac5{32}|t-N|(t+N)
 \geq\frac5{32}td.
\]
Choosing an integer nearest to $t$ gives
\[
 \Psi(m)\leq\frac5{32}(2t+1/2)d+4.
\]
Together with the lower bound $15/16$, these inequalities show
$\Psi(m)\asymp\sqrt m\,d+1$ outside the equality sequence. Equation (52)
follows.
\end{proof}

\begin{corollary}[The $m^2$ error is uniformly optimal]
\label{cor:arithmetic-optimality}
For $m_k=40k^2+\lfloor5k/2\rfloor$,
\[
 \Delta(m_k)=\Theta(m_k^2).
\]
For each fixed nonzero integer $j$ and sufficiently large $k$,
\[
 \Delta(40k^2+j)=\Theta\bigl(|j|(40k^2+j)^{3/2}\bigr).
\]
At $m=40k^2$ the deficit is exactly zero.
\end{corollary}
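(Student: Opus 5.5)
The plan is to derive all three assertions from the arithmetic deficit law, Theorem~10.1, which for large \(m\) gives
\[
 \Delta(m)\asymp m^{3/2}\Psi(m).
\]
The whole task therefore reduces to estimating \(\Psi(m)\) at the three families of edge counts.

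The last assertion is immediate and does not need Theorem~10.1. For \(m=40k^2\), the balanced Clebsch blow-up with class size \(k\) has \(192k^5=c_*m^{5/2}\) induced pentagons. Theorem~1.1 gives the matching upper bound, so \(\Delta(40k^2)=0\).

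For fixed \(j\ne0\), put \(m=40k^2+j\).

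\textbf{Upper estimate.} Take \(N=16k\). Then \(v(N)=0\) and \(|m-5N^2/32|=|j|\), so \(\Psi(m)\le|j|\). Hence \(\Delta(m)\le b_0|j|m^{3/2}\).

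\textbf{Lower estimate.} Since \(m\) is not of the form \(40k'^2\) for large \(k\), the proof of Theorem~10.1 gives \(\Psi(m)\ge15/16\). This only yields \(\Delta(m)\gtrsim m^{3/2}\), not \(|j|m^{3/2}\). Since \(j\) is fixed, the comparison constants in \(\Theta\) may depend on \(j\). Read that way, \(\Delta(m)\ge a_0(15/16)m^{3/2}\ge c_j|j|m^{3/2}\) with \(c_j=a_0\cdot15/(16|j|)\), and the claim follows.

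If absolute constants are intended, the bound \(\Psi(m)\ge15/16\) is not enough, and I would show \(\Psi(m)\gtrsim|j|\) directly. The key step is to bound the distance from \(t=\sqrt{32m/5}\) to \(16k\):
\[
 t-16k=\frac{32j/5}{t+16k}\approx\frac{j}{5k},
\]
which is \(o(1)\). Now split by the value of \(N\) in the minimum defining \(\Psi(m)\).
\begin{itemize}
 \item If \(N=16k\), the term is exactly \(|j|\).
 \item If \(N\) differs from \(16k\) by a nonzero amount \(\ell\), then \(|t-N|\ge|\ell|-o(1)\). The term is then at least \(\frac5{32}|t-N|(t+N)\gtrsim k\), which exceeds \(|j|\) for large \(k\).
\end{itemize}
So \(\Psi(m)\ge\min\{|j|,ck\}=|j|\) for large \(k\), and Theorem~10.1 gives the \(\Theta\) claim with absolute constants.

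For \(m_k=40k^2+\lfloor5k/2\rfloor\), write \(\lfloor5k/2\rfloor=5k/2-\beta\) with \(\beta\in\{0,1/2\}\). Then
\[
 \frac{32m_k}{5}=256k^2+16k-\frac{32\beta}{5},
\]
so \(t=\sqrt{(16k+\tfrac12)^2-\tfrac14-\tfrac{32\beta}5}=16k+\tfrac12-O(1/k)\). Hence \(\operatorname{dist}(t,\mathbb Z)\to1/2\).

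\textbf{Lower estimate.} From the proof of Theorem~10.1, every term in \(\Psi(m_k)\) is at least \(\frac5{32}\,t\operatorname{dist}(t,\mathbb Z)\gtrsim k\). So \(\Psi(m_k)\gtrsim\sqrt{m_k}\).

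\textbf{Upper estimate.} The proof of Theorem~10.1 already gives \(\Psi(m)=O(\sqrt m)\) for all large \(m\).

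Therefore \(\Psi(m_k)\asymp\sqrt{m_k}\), and \(\Delta(m_k)=\Theta(m_k^2)\) by (51), or equivalently by (52).

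The main obstacle is the fixed-offset lower bound. It depends on reading whether the constants may depend on \(j\); if not, it requires the separate analysis of the \(N=16k\) term versus all other \(N\) described above. Everything else is a direct substitution into Theorem~10.1.
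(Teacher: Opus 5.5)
Your proposal is correct and follows the paper's proof. Both reduce every assertion to Theorem~10.1 by evaluating $\Psi$: the index $N=16k$ gives $\Psi(40k^2+j)=|j|$ for large $k$, and the exact balanced Clebsch blow-up gives $\Delta(40k^2)=0$. The one small difference is at $m_k$, where the paper computes $\Psi(m_k)=\lfloor5k/2\rfloor$ exactly while you only show $\Psi(m_k)\asymp\sqrt{m_k}$ via $\operatorname{dist}(t,\mathbb Z)\to1/2$, which is equally sufficient.
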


\begin{proof}
For the first sequence, the minimizing index in $\Psi$ is $N=16k$, and
$\Psi(m_k)=\lfloor5k/2\rfloor$. Indeed, the next larger square value is
$40k^2+5k+5/32$, and all smaller indices have a strictly greater distance
from $m_k$ than $N=16k$; the nonnegative term $v(N)$ cannot reverse this.
For fixed $j\ne0$, the same index minimizes for all sufficiently large
$k$, with $\Psi(40k^2+j)=|j|$. Apply Theorem 10.1. The last assertion is
the exact Clebsch construction.
\end{proof}

\begin{remark}\label{rem:arithmetic-scale}
Theorem 10.1 determines the order of the finite-edge deficit, with constants
uniform in $m$. It does not identify the exact maximizing graph at each
integer edge count. Theorem 12.2 below identifies the leading coefficient
on the $m^2$ scale. Its uniform absolute remainder does not replace the
present comparison at phases extremely close to zero.
\end{remark}


\section{Root profiles and the two arithmetic constructions}
\label{sec:profiles}

Write $P(x)$ for the Clebsch pentagon polynomial and $A$ for its adjacency
matrix. An added vertex has fractional adjacency proportions
$p=(p_i)_{i=1}^{16}\in[0,1]^{16}$ to sixteen unit classes. Its degree and
pentagon link are
\[
 d(p)=\sum_i p_i,\qquad
 c(p)=\sum_{\mathrm{Cl}[U]\cong P_4}
       \prod_{\substack{i\text{ endpoint}\\\text{of }U}}p_i
       \prod_{\substack{j\text{ internal}\\\text{in }U}}(1-p_j).
\]

\begin{lemma}[The profile envelope]\label{lem:profile-envelope}
For every fractional profile,
\[
 c(p)\leq\min\left\{\frac{12}{5}d(p)^2,60\right\}.
\]
For every $0\leq d\leq5$, the first bound is attained by setting
$p_i=d/5$ on a Clebsch neighborhood and zero elsewhere.
\end{lemma}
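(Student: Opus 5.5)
Three separate claims need proof: the bound $c(p)\le 60$, the bound $c(p)\le\frac{12}{5}d(p)^2$, and attainment of the second bound on a Clebsch neighborhood.

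\emph{The bound $60$.} The quantity $c(p)$ is multilinear in $p$, with each term a product of factors $p_i$ or $1-p_j$. So on the cube $[0,1]^{16}$ it is maximized at a vertex, that is, at a $0/1$ profile $S$. For such a profile, $c(S)$ counts the induced $P_4$'s of the Clebsch graph whose endpoints lie in $S$ and whose internal vertices lie outside $S$. Each of these is exactly an induced pentagon through the new vertex in the blow-up with one added vertex. Equivalently, $c(S)$ is the link of a new vertex in the Clebsch blow-up with unit classes. Lemma~3.1, specifically the inequality $L_x\le E\,d(x)^2/4$ in finite form, is not sharp enough for this. Instead I will use the bound $L_x\le s_x^2/2$ from (13) together with a direct count on the finite graph. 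The cleaner route is an exhaustive check over all $2^{16}$ subsets, up to the automorphism group of order $1920$, which leaves a handful of orbits. This verifies $c(S)\le 60$, with equality when $S$ is a closed-off neighborhood, where $c(N(v))=60$ is the pentagon count through $v$. I would present this as a finite, integer-verifiable enumeration, consistent with the paper's style.

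\emph{The bound $\frac{12}{5}d^2$.} I would pass through the global weighted theorem rather than argue locally. Consider the weighted graph consisting of the Clebsch blow-up with class weights $1$, plus a new vertex $w$ of weight $\epsilon$ joined fractionally by the profile $p$. Realize the fractional adjacency as a graphon step, or split the vertex into a blow-up with rational proportions. Its edge mass is $40+\epsilon d(p)$, and its pentagon mass is $192+\epsilon c(p)+O(\epsilon^2)$. Theorem~1.3 (the graphon bound) gives $192+\epsilon c\le c_*(40+\epsilon d)^{5/2}$. Since $c_*40^{5/2}=192$, the first-order terms give $c(p)\le 12\,d(p)$. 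This is linear in $d$, so it only yields the claim for $d\ge5$, where it is weaker than $60$. For the quadratic bound I instead scale the base: take class weights $\lambda$. Then the inequality
\[
 c_*(40\lambda^2+\epsilon\lambda d)^{5/2}\ge192\lambda^5+\epsilon\lambda^4c+O(\epsilon^2)
\]
is again $c\le12d$, since the ratio is homogeneous. So scaling does not help, and a genuine quadratic argument is needed.

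For the quadratic argument I will use the link inequality of Lemma~3.1 in the refined form $L_x\le\frac{d^2}{4}E_{\mathrm{out}}$, but applied with the opposite-edge decomposition specific to Clebsch. Fix an opposite edge $bc$ of the Clebsch graph. The endpoint masses $U_{bc},V_{bc}$ are sums of $p_a(1-p_b)(1-p_c)$ over the exclusive neighbors $a$ of $b$ and of $c$, at most four each. By AM--GM, $U_{bc}V_{bc}\le\frac14(U_{bc}+V_{bc})^2$. I then sum over the $40$ edges using the fact that each class is an exclusive neighbor for exactly $8$ edge-sides. Cauchy--Schwarz over edges with this regularity should give $\sum_{bc}(U_{bc}+V_{bc})^2\le 48 d^2/5$ times the appropriate factor, yielding $12d^2/5$. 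Getting the exact constant $12/5$, rather than merely $O(d^2)$, is the main obstacle. If this does not close, I will fall back on maximizing the multilinear $c$ subject to fixed $d$. One could try taking the KKT conditions together with the symmetric degree-$\le4$ structure, but a cleaner route is a quadratic-form bound $c(p)\le p^TKp$ with a nonnegative matrix $K$, dropping the $(1-p)$ factors, followed by a spectral bound $\lambda_{\max}$ of $K$ restricted appropriately.

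\emph{Attainment.} Put $p_i=d/5$ on $N(v)$ and zero elsewhere. Then $d(p)=d$, and the internal vertices of any qualifying $P_4$ lie outside $N(v)$, so their factors $1-p_j$ equal $1$. The endpoints must both lie in $N(v)$. The number of induced $P_4$'s with both endpoints in $N(v)$ and middle outside is $60$, since these correspond to the pentagons through $v$. So $c=60(d/5)^2=\frac{12}{5}d^2$.
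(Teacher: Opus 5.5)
\textbf{Gap: the quadratic bound $c(p)\le\frac{12}{5}d(p)^2$ is not proved.} Your proof of the bound $60$ (multilinearity, then an exhaustive check of the $2^{16}$ Boolean profiles) is the paper's argument, and so is your attainment computation on $p=(d/5)\mathbf 1_{N(v)}$. For the quadratic bound, neither route you sketch can produce the constant $\frac{12}{5}$.

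\emph{The opposite-edge route.} Your AM--GM chain is already lossy at the extremal profile. On each of the five edges $vu$ with $u\in N(v)$, one endpoint mass is $4d/5$ and the other is $0$. So $U_{bc}V_{bc}=0$, while $(U_{bc}+V_{bc})^2/4=4d^2/25$. Evaluate the resulting bound $\sum_{bc}(1-p_b)(1-p_c)(U_{bc}+V_{bc})^2/4$ over all forty edges at this profile. You get $\frac{12}{5}d^2+(1-\frac d5)d^2$, or $\frac{17}{5}d^2$ if the factors $(1-p_b)(1-p_c)$ are dropped. Both are strictly above the target for $d<5$. (Also, each vertex is an exclusive neighbor on twenty edge-sides, not eight.)

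\emph{The spectral route.} This fails for a structural reason. Every nonadjacent Clebsch pair is the endpoint pair of exactly six induced $P_4$'s. So dropping the internal factors gives exactly $c(p)\le 6\sum_{ij\notin E(\mathrm{Cl})}p_ip_j=3\bigl(d^2-p^{T}(I+A)p\bigr)$. The sharp constant is then equivalent to $p^{T}(I+A)p\ge d^2/5$ for $p\ge0$ with $\sum_ip_i=d$, i.e.\ to $\alpha(\mathrm{Cl})=5$. Eigenvalue or semidefinite methods cannot certify this. The matrix $I+A$ has eigenvalue $-2$. The Hoffman ratio bound for the Clebsch graph, which here coincides with its Lovász theta, is $16\cdot3/(5+3)=6$, not $5$. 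The best such certificate therefore gives only $p^{T}(I+A)p\ge d^2/6$, hence $c(p)\le\frac52d^2$.

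\textbf{Missing ingredient.} You need the Motzkin--Straus inequality applied to the complement of $\mathrm{Cl}$, which is the paper's argument.
\begin{itemize}
\item Fix the sum of the weights at two Clebsch-adjacent vertices. Then $\sum_{ij\notin E(\mathrm{Cl})}p_ip_j$ is linear in the two weights, so moving all the weight to one of them does not decrease it.
\item Iterate this. You may then assume the support is independent in $\mathrm{Cl}$, of size $r\le\alpha(\mathrm{Cl})$.
\item On such a support the form is at most $(1-1/r)d^2/2\le\frac25d^2$, which gives $c(p)\le 6\cdot\frac25d^2=\frac{12}{5}d^2$.
\end{itemize}
You also need $\alpha(\mathrm{Cl})=5$. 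Translate an independent set to contain $0$. Its other members are pair vertices $s_i+s_j$ whose index pairs pairwise intersect, and such a family of $2$-subsets of $[5]$ has at most four members.
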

\begin{proof}
Every nonadjacent pair of Clebsch vertices is the endpoint pair of exactly
six induced four-vertex paths; adjacent pairs are endpoints of none.
Dropping the internal nonneighbor factors therefore gives
\[
 c(p)\leq6\sum_{\substack{i<j\\ij\notin E(\mathrm{Cl})}}p_i p_j.
\]
The independence number of $\mathrm{Cl}$ is five. Indeed, after translating
an independent set to contain $0$, its other vertices lie in the ten pair
vertices of the standard labeling. Pairwise nonadjacency there means
pairwise intersection of two-subsets of $[5]$. Such a family has at most
four members, since a family of size four must be a star. Clebsch
neighborhoods attain size five.

For completeness, the weighted clique inequality of Motzkin and
Straus~\cite{MotzkinStraus1965} used here follows by repeatedly merging the
weights of two nonadjacent vertices in the support of a graph. With their
sum fixed, the edge quadratic form is linear in the two weights, so one
of the two mergers does not decrease it. Eventually the support is a
clique of size $r$, where its value is at most $(1-1/r)d^2/2$ by
Cauchy--Schwarz. Apply this to the complement of $\mathrm{Cl}$, whose
clique number is five, to obtain the first bound.

The bound $c(p)\leq60$ is a finite exact ingredient: for every one of the
$65,536$ Boolean neighborhoods $N$, count the $480$ induced $P_4$ subsets
with endpoints in $N$ and internal vertices outside $N$. The supplied
standard-library program \texttt{verify\_profile\_envelope.py} constructs
these paths from the Cayley edge list and checks every count. It finds
$c(N)\leq60$, with equality precisely at the sixteen open and sixteen
closed neighborhoods. The degreewise maxima, for degrees zero through
sixteen, are
\[
 0,0,6,18,36,60,60,54,48,44,42,35,25,18,12,0,0.
\]
Multilinearity represents $c(p)$ as the expectation of $c(N)$ for an
independent Bernoulli subset, so the Boolean inequality gives the
fractional one.

Finally, a neighborhood is an independent five-set. An induced
four-vertex path with endpoints in that neighborhood has its two internal
vertices outside it. There are six paths per endpoint pair and ten pairs,
yielding $c(p)=60(d/5)^2$ for the stated profile.
\end{proof}

Define
\[
 x_m=\sqrt{\frac{32m}{5}},\qquad
 \alpha_m=x_m-\lfloor x_m\rfloor,\qquad
 k_m=\sqrt{\frac{m}{40}}=\frac{x_m}{16},
\]
and put
\[
 f(\alpha)=\frac{(1-\alpha)\min\{3\alpha,1\}}{80},
 \qquad 0\leq\alpha<1.
\]
As usual $\Delta(m)=c_\ast m^{5/2}-\rho(C_5,m)$.

\begin{proposition}[Uniform constructive upper error]
\label{prop:constructive-error}
As $m\to\infty$ through the integers,
\[
 \Delta(m)\leq m^2 f(\alpha_m)+O(m^{7/4}),
\]
with an absolute implicit constant.
\end{proposition}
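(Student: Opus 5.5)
The plan is to exhibit, for every large $m$, an explicit graph with exactly $m$ edges whose deficit is at most $m^2 f(\alpha_m)+O(m^{7/4})$. Write $x=x_m$, $N_0=\lfloor x\rfloor$, $q=x/16=k_m$. Both constructions start from a Clebsch blow-up with class-size vector $n=q'\mathbf 1+h$, with $\sum_i h_i=0$. For such a blow-up, (54) gives $E(n)=\tfrac5{32}N^2+\tfrac12h^TAh$. The pentagon polynomial is expanded to second order via $Q(x)=c_\ast E(x)^{5/2}-P(x)$, using $Q(\mathbf 1)=0$ and $\nabla Q(\mathbf 1)=0$.

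The first step is to compute the Hessian of $Q$ at $\mathbf 1$ on the three eigenspaces of $A$, with eigenvalues $5,1,-3$. This uses the counts recorded in the proof of Proposition~8.5: sixty pentagons per vertex and twelve per nonadjacent pair; the adjacent-pair count follows from degree five and triangle-freeness. The result is a quadratic form $q^3\,h^T\mathcal H h$ that is a constant multiple of $\|h\|^2$ on each of the $1$- and $(-3)$-eigenspaces. Every induced pentagon is accounted for because the blow-up is triangle-free.

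The second step is a marginal-value computation for added vertices. By Lemma~11.1, a vertex with profile $p$ attached to the balanced template of class size $q$ contributes $d(p)q$ edges and at most $\tfrac{12}{5}d(p)^2q^4$ pentagons. The optimal marginal rate is $\tfrac52c_\ast m^{3/2}=12q^3$ pentagons per edge, and a full clone ($d=5$) attains it exactly. A partial clone of degree $d<5$ therefore loses $\bigl(12d-\tfrac{12}{5}d^2\bigr)q^4$ pentagons relative to that rate.

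The two competing constructions are then as follows.

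\textbf{Construction (a), the partial clone.} Take $N=N_0$ active vertices, which leaves an edge shortfall of $\tfrac5{32}(x^2-N_0^2)\approx\tfrac5{16}x\alpha$. Fill it with whole clones of Clebsch classes, chosen to keep the class sizes balanced so that the variance stays $O(1)$. At most one final vertex receives a partial neighborhood profile $p_i=d/5$. The deficit is the loss of that partial vertex plus the quadratic imbalance cost. Optimizing $d$, or distributing the missing degree over several partial vertices, should produce the branch $(1-\alpha)/80$, which is valid for $\alpha\ge1/3$.

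\textbf{Construction (b), the eigenspace perturbation.} Take $N=N_0+1$, which gives an edge excess of about $\tfrac5{16}x(1-\alpha)$. Absorb it by choosing $h$ in the $(-3)$-eigenspace with $-\tfrac32\|h\|^2$ equal to minus the excess. The cost is then $q^3\mathcal H_{-3}\|h\|^2$. Combining this with the term $c_\ast\bigl(m^{5/2}-e^{5/2}\bigr)$ should give the branch $3\alpha(1-\alpha)/80$, and the minimum of the two branches is $f(\alpha)$. Where a branch gives a nonzero edge discrepancy, it is removed as in the proof of Theorem~10.1, by a disjoint matching or by deleting edges. Each deleted edge costs at most $\binom{N-2}{3}$ pentagons, and the discrepancy will be $O(1)$.

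The main obstacle is integrality in construction (b). Here $\|h\|^2\asymp\sqrt m$, so $h$ has size about $m^{1/4}$. Rounding a real eigenvector to integer class sizes, while keeping the total equal to $N$, moves each coordinate by $O(1)$. The resulting first-order change in $Q$ along the perturbation is $O(q^3\|h\|)=O(m^{3/2}m^{1/4})=O(m^{7/4})$, which matches the stated remainder. The rounding also changes the edge count by $O(\|h\|)=O(m^{1/4})$. I would correct this by a matching or edge deletions, at a cost of $O(m^{1/4}\cdot m^{3/2})$. I would carry out the rounding coordinatewise from a fixed integer-friendly basis vector of the $(-3)$-eigenspace scaled by an integer, so that the only error is the scaling mismatch. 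Finally, all Taylor remainders are $O(q^2\|h\|^3)=O(m\cdot m^{3/4})$, and these are absorbed into $O(m^{7/4})$ uniformly in $\alpha$.
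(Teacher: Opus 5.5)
Your two constructions are the paper's: a single partial clone on a Clebsch neighborhood over the balanced blow-up with $N=\lfloor x_m\rfloor$, and a $(-3)$-eigenvector perturbation with $N=\lfloor x_m\rfloor+1$. Your rounding and correction bookkeeping for the second one, with error $O(q^3\lVert h\rVert)=O(m^{7/4})$, also matches the paper. However, you have attached the two branches of $f$ to the wrong constructions, so two of your stated outcomes are false.

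In (a), with $k=k_m$, the shortfall is $5k\alpha+O(1)<5k$. So no whole clone fits, and the partial vertex has $d=5\alpha$. Your own loss formula then gives $(12d-\frac{12}{5}d^2)k^4=60\alpha(1-\alpha)k^4=\frac{3\alpha(1-\alpha)}{80}m^2$. Neither remedy you suggest improves this. First, $d$ is pinned by the edge count: filling a leftover shortfall with a matching costs $12k^3$ per edge, giving $(60\alpha-\frac{12}{5}d^2)k^4$, which is minimized at $d=5\alpha$. Second, $d\mapsto 12d-\frac{12}{5}d^2$ is concave and vanishes at $0$, hence subadditive, so splitting the degree among several partial vertices only increases the loss. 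Thus (a) cannot produce $(1-\alpha)/80$ for $\alpha\ge 1/3$; as $\alpha\to1$ it is three times worse than that.

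In (b), the excess is $5k(1-\alpha)+O(1)$, so $\frac32\lVert h\rVert^2=5k(1-\alpha)$. Since $\nabla^2Q(\mathbf 1)=12I-\frac34J$ acts as $12I$ on sum-zero vectors, the cost is $6k^3\lVert h\rVert^2=20(1-\alpha)k^4=\frac{1-\alpha}{80}m^2$. There is no factor $\alpha$ here. The term $c_\ast(m^{5/2}-e^{5/2})$ is only the $O(m^{7/4})$ rounding correction, because $e=m+O(m^{1/4})$.

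So the partial clone is the right construction for $\alpha\le 1/3$, and the eigenspace perturbation for $\alpha\ge1/3$, exactly as in the paper. The minimum of the two correctly computed values is still $m^2f(\alpha_m)$, so the proposition follows once you swap the labels. But these two evaluations are the substance of the proof and must be carried out, not predicted. A minor point: the per-edge count $24$ does not follow from degree five and triangle-freeness alone. Use the direct count $4!=24$ from the appendix, or edge-transitivity.
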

\begin{proof}
Write $k=k_m$ and $\alpha=\alpha_m$. All estimates below are uniform for
$0\leq\alpha<1$.

For the first construction let $N=\lfloor x_m\rfloor$ and distribute $N$
vertices into sixteen classes whose sizes differ by at most one. Their
Clebsch blow-up $B$ has
\[
 e(B)=\frac{5N^2}{32}+O(1),\qquad
 I_5(B)=192\left(\frac{N}{16}\right)^5+O(k^3).
\]
Fix a template vertex $v$. Add a single new vertex, with neighbors only
in the five classes indexed by $N_{\mathrm{Cl}}(v)$, and choose its degree
as close to $m-e(B)$ as feasibility permits. This leaves at most $O(1)$
edges to add or delete: the available number of neighbors is $5k+O(1)$,
whereas $m-e(B)=5k\alpha+O(1)$. Within those five classes choose the
neighbor counts to be $\alpha k+O(1)$, with their total equal to the
chosen degree. Such simultaneous rounding is possible by distributing
the bounded integer remainder among the five classes.

There are sixty induced path templates with their endpoints in the
selected neighborhood. Consequently the added vertex lies in
$60\alpha^2k^4+O(k^3)$ induced pentagons. Correct the bounded edge
remainder by adding a disjoint matching or deleting edges. Each deletion
destroys at most $O(k^3)$ of the existing pentagons; adding the matching
changes none. The resulting graph has exactly $m$ edges and
\begin{align*}
 I_5(G)&=192(k-\alpha/16)^5+60\alpha^2k^4+O(k^3)\\
       &=192k^5-60\alpha(1-\alpha)k^4+O(k^3).
\end{align*}
Thus $\Delta(m)\leq60\alpha(1-\alpha)k^4+O(k^3)$.

For the second construction put $N=\lfloor x_m\rfloor+1$,
$\beta=1-\alpha$, and $t=N/16=k+\beta/16$. The Clebsch matrix has a
zero-sum vector $w\in\{-1,1\}^{16}$ satisfying $Aw=-3w$. Explicitly, at
the vertex $u\in\mathbb F_2^4$ take $w_u=(-1)^{u_1+u_2+u_3}$; it has
squared norm sixteen. Set
\[
 r^2=\frac{40t^2-m}{24},\qquad y_i=t+rw_i.
\]
Then $\sum_i y_i=N$, all $y_i$ are positive for large $m$, and
\[
 E(y)=40t^2-24r^2=m,\qquad
 \sum_i(y_i-t)^2=16r^2=\frac23(40t^2-m).
\]
Round the $y_i$ to integers $n_i$ preserving their sum $N$ and with
$|n_i-y_i|\leq1$. Since $r=O(\sqrt{k})$ and the rounding error has sum
zero, the resulting blow-up $B$ satisfies
\[
 e(B)=m+O(\sqrt{k}),\qquad
 V:=\sum_i(n_i-t)^2=\frac23(40t^2-m)+O(\sqrt{k}).
\]
The logarithmic Hessian calculation for the template, or direct
polynomial expansion, gives uniformly for zero-sum $h=O(\sqrt{k})$,
\[
 c_\ast E(t\mathbf1+h)^{5/2}-P(t\mathbf1+h)
   =6t^3\lVert h\rVert_2^2+O(k^{7/2}).
\]
Hence the deficit of $B$ at its own edge count is
\[
 6k^3\frac23(40t^2-m)+O(k^{7/2})
   =20\beta k^4+O(k^{7/2}).
\]
Correct $e(B)-m$ by adding a disjoint matching or deleting edges. The
number of changes is $O(\sqrt{k})$, and each deletion destroys at most
$O(k^3)$ pentagons. The change in the comparison expression
$c_\ast e(B)^{5/2}$ is also $O(k^{7/2})$. Thus this construction gives
$\Delta(m)\leq20(1-\alpha)k^4+O(k^{7/2})$.

Take the better construction and use $k^4=m^2/1600$.
\end{proof}

\section{A sharp arithmetic phase law}\label{sec:phase-law}

We retain $k=\sqrt{m/40}$, $x=16k=\sqrt{32m/5}$,
$\alpha=x-\lfloor x\rfloor$, and the function
\[
 f(\alpha)=\frac{(1-\alpha)\min\{3\alpha,1\}}{80}
\]
from the preceding section.

\begin{lemma}[The scalar profile problem]\label{lem:scalar-profile}
Put $\phi(u)=\max\{12u,-4u\}$. For arbitrary fractional profiles
$p_1,\ldots,p_b\in[0,1]^{16}$ and any integer $\ell$,
\begin{equation}\tag{58}\label{eq:scalar-profile}
 \sum_{j=1}^b\bigl(12d(p_j)-c(p_j)\bigr)
 +\phi\left(5(\alpha+\ell)-\sum_{j=1}^bd(p_j)\right)
 \geq20(1-\alpha)\min\{3\alpha,1\}.
\end{equation}
\end{lemma}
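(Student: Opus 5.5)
The plan is to reduce \eqref{eq:scalar-profile} to a one-variable problem by bounding each profile's contribution from below by a function of its degree alone. Lemma~\ref{lem:profile-envelope} does this. Write $d_j=d(p_j)\in[0,16]$. The lemma gives
\[
 12d_j-c(p_j)\geq h(d_j),\qquad
 h(d)=\begin{cases}\tfrac{12}{5}d(5-d),&0\le d\le5,\\ 12(d-5),&d\ge5,\end{cases}
\]
because $\tfrac{12}{5}d^2\le60$ exactly when $d\le5$. Thus $h\ge0$, $h$ vanishes at $0$ and $5$, $h$ is concave on $[0,5]$, and $h$ is linear with slope $12$ beyond $5$. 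It then suffices to show
\[
 \sum_j h(d_j)+\phi(s-D)\ge 20(1-\alpha)\min\{3\alpha,1\},
 \qquad D=\sum_j d_j,\quad s=5(\alpha+\ell).
\]

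The second step is to minimise $\sum_j h(d_j)$ for fixed total $D$. Concavity on $[0,5]$ lets us move mass between any two degrees lying strictly inside $(0,5)$, keeping their sum fixed, without increasing the cost. So at most one degree is fractional in $(0,5)$, and every other degree in $[0,5]$ may be taken to be $0$ or $5$.

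For a degree $d>5$ we split off its excess. Since $h(d)=12(d-5)$ and $h(5)+h(d-5)\le 12(d-5)$ whenever $0\le d-5\le5$, an excess can be peeled off as a separate "virtual" degree at no increase in cost; this step needs checking. Formally this is a relaxation: we allow arbitrarily many virtual degrees, which only lowers the minimum, so the lower bound remains valid.

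After these reductions the minimal cost is
\[
 \psi(D)=\tfrac{12}{5}\,\sigma(5-\sigma),\qquad \sigma=D-5\lfloor D/5\rfloor .
\]
This function is $5$-periodic, concave on each period, and vanishes at multiples of $5$.

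The third step minimises $F(D)=\psi(D)+\phi(s-D)$ over $D\ge0$. On each interval between consecutive multiples of $5$ and $s$, the function $\psi$ is concave and $\phi(s-\cdot)$ is linear. So $F$ is concave there, and its minimum occurs at a multiple of $5$ or at $D=s$. The candidates are as follows.
\begin{itemize}
 \item $D=s$ gives $\psi(s)=60\alpha(1-\alpha)$.
 \item $D=5\ell$ gives $12\cdot5\alpha=60\alpha\ge60\alpha(1-\alpha)$.
 \item $D=5(\ell+1)$ gives $4\cdot5(1-\alpha)=20(1-\alpha)$.
 \item Multiples of $5$ farther from $s$ cost more, since $\phi$ grows linearly in each direction.
\end{itemize}
The case $\ell<0$ is handled because $D\ge0$ only restricts the candidates further. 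Hence
\[
 F\ge\min\{60\alpha(1-\alpha),\,20(1-\alpha)\}=20(1-\alpha)\min\{3\alpha,1\}.
\]

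I expect the main obstacle to be the second step: rigorously justifying the reduction to the periodic envelope $\psi$ when some $d_j$ exceed $5$. This amounts to a clean splitting inequality showing that $h$ dominates the superadditive-type envelope. I would first try proving $\sum_j h(d_j)\ge\psi(D)$ directly by induction on $b$, using $\psi(a+b)\le h(a)+\psi(b)$, checked casewise on the positions of $a$ and $b$ modulo $5$.
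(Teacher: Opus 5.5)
Your proof is correct. Its skeleton matches the paper's: the degree envelope from Lemma~11.1, a concavity exchange that leaves at most one fractional degree, and a finite candidate check in one variable.

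The one genuine difference is how you handle degrees above $5$.

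\textbf{The paper's route.} It clips each $z_j=d_j/5>1$ down to $1$, which changes the argument of $\phi$. It compensates by noting two facts. The profile cost drops by exactly $60(z_j-1)$. The function $\Phi(u)=\phi(5u)$ is $60$-Lipschitz. Hence the total cannot increase. It then absorbs the integer part of $\sum_j z_j$ into $\ell$ and does an explicit case split on $\ell$ and on whether $t\le\alpha$ or $t\ge\alpha$.

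\textbf{Your route.} You split $d\mapsto 5+(d-5)$. This keeps $D$ fixed, so it separates the profile part from the penalty term completely: you get $\sum_j h(d_j)\ge\psi(D)$ without touching $\phi$. You then minimise $\psi(D)+\phi(s-D)$ using piecewise concavity between the breakpoints $5\mathbb Z\cup\{s\}$. The candidate values $60\alpha(1-\alpha)$, $60\alpha$ and $20(1-\alpha)$ are the same as the paper's.

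The step you flagged as the main obstacle is in fact immediate.
\begin{itemize}
\item For $5\le d\le 10$: $h(d-5)=\tfrac{12}{5}(d-5)(10-d)\le 12(d-5)=h(d)$.
\item For $d>10$, a case you did not state but need, since $d_j$ can reach $16$: $h(d-5)=12(d-10)<h(d)$.
\end{itemize}
So repeated splitting terminates with every entry in $[0,5]$ at no increase in cost.

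Your inductive fallback $\psi(a+b)\le h(a)+\psi(b)$ also holds, for two reasons. First, $\psi\le h$. Second, $\psi$ is subadditive: it is a $5$-periodic concave bump vanishing at $0$ and symmetric under $x\mapsto 5-x$. When the two residues sum to more than $5$, you reduce to their complements.

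Your handling of $\ell<0$ by minimising over all real $D$ and then restricting to $D\ge 0$ is also sound.
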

\begin{proof}
Write $z_j=d(p_j)/5$ and $\Phi(u)=\phi(5u)$. Lemma~11.1 gives
\[
 12d(p_j)-c(p_j)\geq g(z_j),\qquad
 g(z)=\begin{cases}
       60z(1-z),&0\leq z\leq1,\\
       60(z-1),&z\geq1.
      \end{cases}
\]
The function $\Phi$ is Lipschitz with constant sixty. Thus replacing any
$z_j>1$ by one can only decrease
$\sum_jg(z_j)+\Phi(\alpha+\ell-\sum_jz_j)$: its profile cost drops by
$60(z_j-1)$ and the other term rises by at most that amount. We may
consequently suppose $0\leq z_j\leq1$.

For a fixed sum $\sum_jz_j=a+t$, where $a$ is an integer and $0\leq t<1$,
the sum $\sum_jz_j^2$ is maximized by $a$ ones, one entry $t$, and zeros.
One can see this by moving mass between any two entries strictly between
zero and one until one reaches an endpoint. Hence
\[
 \sum_j60z_j(1-z_j)\geq60t(1-t).
\]
Absorbing $a$ into $\ell$ reduces the problem to minimizing
\[
 60t(1-t)+\Phi(\alpha+\ell-t),\qquad 0\leq t\leq1,\quad\ell\in\mathbb Z.
\]
For $\ell\leq-1$, the second term alone is at least $20(1-\alpha)$; for
$\ell\geq1$, the minimum over $t$ is at least $60\alpha$. For $\ell=0$
and $t\leq\alpha$ the expression is $60(\alpha-t^2)$, whose minimum is
$60\alpha(1-\alpha)$. For $t\geq\alpha$ it is the concave quadratic
$80t-60t^2-20\alpha$, so its minimum is attained at $t=\alpha$ or $t=1$.
The two values are $60\alpha(1-\alpha)$ and $20(1-\alpha)$, respectively.
Their minimum is the asserted right side.
\end{proof}

\begin{theorem}[Sharp phase-dependent second-order asymptotic]
\label{thm:phase-law}
As $m\to\infty$ through the positive integers,
\begin{equation}\tag{59}\label{eq:phase-law}
 \begin{gathered}
 \rho(C_5,m)=c_\ast m^{5/2}
   -\frac{m^2}{80}(1-\alpha_m)\min\{3\alpha_m,1\}+O(m^{15/8}),\\
 \alpha_m=\left\{\sqrt{\frac{32m}{5}}\right\}.
 \end{gathered}
\end{equation}
The implicit constant is absolute; in particular the remainder is uniform
as the phase approaches either endpoint.
\end{theorem}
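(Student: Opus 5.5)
The upper bound on \(\Delta(m)\), i.e.\ the lower bound on \(\rho(C_5,m)\), is Proposition~11.2 with remainder \(O(m^{7/4})\subseteq O(m^{15/8})\). It remains to prove
\[
 \Delta(m)\ge m^2f(\alpha_m)-O(m^{15/8}).
\]
Equivalently, writing \(k=\sqrt{m/40}\),
\[
 \Delta(m)\ge 20(1-\alpha)\min\{3\alpha,1\}\,k^4-O(k^{15/4}),
\]
since \(m^2/80=20k^4\).

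Take an extremal graph \(G\) with \(m\) edges. By Theorem~10.1, \(\Delta(m)=O(m^2)\), so \(\varepsilon=O(m^{-1/2})\). Theorem~9.3 then gives an unbalanced Clebsch blow-up \(H\) with class sizes \(n_i=k+O(k^{1/2})\) and edit distance \(L=O(\varepsilon m)=O(k)\). The plan is to expand \(\Delta(m)\) to order \(k^4\) in terms of three data:
\begin{itemize}
 \item the imbalance \(h=n-(N/16)\mathbf 1\), whose squared norm is \(V=O(k)\);
 \item the vertices of \(G\) whose neighborhoods differ from their class profile by a macroscopic amount;
 \item the residual edge count.
\end{itemize}

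First I would split \(V(G)\) into a \emph{clean} part and \emph{deviant} vertices. A vertex is clean if its row differs from its template row in \(o(k)\) places. A vertex is deviant if it carries a nontrivial fractional profile \(p_j\in[0,1]^{16}\) relative to the class masses, counting any isolated vertex of \(H\) with positive degree. Since \(L=O(k)\), there are only \(O(k^{1/2})\) deviant vertices of degree \(\gtrsim k^{1/2}\), and low-degree deviations contribute negligibly.

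For each deviant vertex, its pentagon contribution is \(c(p_j)k^4+O(k^{7/2})\) and its edge contribution is \(d(p_j)k\). Lemma~11.1 bounds the gain against the marginal ``price'' \(12d\) per unit of degree. This price comes from \(\nabla(c_*E^{5/2})\) at the balanced blow-up: \(\partial P/\partial E=60/5=12\) in class units.

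The clean part is a blow-up with class sizes \(n\) and sporadic edits. Each sporadic edit costs a positive order-\(k^3\) amount of pentagons, so edits of this kind can only hurt, up to \(O(k^{7/2})\) per edit summed over \(O(k)\) edits. The imbalance costs \(6t^3\|h\|^2\) plus a correction. By the identity in Proposition~11.2, the edge count is \(E=5N^2/32+\tfrac12h^TAh\). The eigenvalues of \(A\) on the zero-sum space are \(1\) and \(-3\). Maximizing edge deficit per unit of imbalance cost selects the \(-3\) eigenspace, giving the rate \(6\cdot\frac{2}{3}=4\) per lost edge, times \(k^3\). Together with the gain from extra edges at rate \(12\), the cost of mismatching the target edge count is \(\phi\) applied to the edge defect measured in units of \(k\).

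Writing \(N=\lfloor x_m\rfloor+\ell'\), the leftover edge requirement in units of \(k\) is \(5(\alpha+\ell)-\sum_jd(p_j)\). Here we use \(5N^2/32=m-5(\alpha+\ell)k+O(1)\), since \(dm/dN\approx 5N/16=5k\). The deficit then becomes \(k^4\) times the left side of (58), up to error. Lemma~12.1 gives the bound \(20(1-\alpha)\min\{3\alpha,1\}\).

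\textbf{Main obstacle.} The main obstacle is the error bookkeeping that produces the exponent \(15/8\), and in particular the choice of the threshold separating deviant from clean vertices. Third-order Taylor terms in the imbalance give \(k^2\|h\|^3\lesssim k^{7/2}\). Interactions between pairs of deviant vertices give \(O(k^3)\) each, summed over up to \((L/\text{threshold})^2\) pairs. Misclassified partial deviations give \(\text{threshold}\cdot k^3\) per vertex. I would choose the threshold \(k^{3/4}\) and balance these contributions; this is exactly where \(k^{15/4}=O(m^{15/8})\) should arise. I would first try the crude pairwise bounds before attempting anything sharper.
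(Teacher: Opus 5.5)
Your plan is correct and follows essentially the same route as the paper. The paper likewise takes the linear-stability template of Theorem~9.3, marks as bad the vertices of wrong degree above $k^{3/4}$ and gives them fractional profiles contributing $k^4c(p_j)$, charges first-order edit costs, uses the Hessian term $6k^3V$ and the eigenvalue bounds $-\tfrac32 v\le a\le\tfrac12 v$ to turn imbalance and edits into $\phi$ of the edge defect, and concludes with Lemma~12.1, balancing the errors exactly as you describe to reach $O(k^{15/4})$.

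There are three small corrections. First, the threshold $k^{1/2}$ in your second paragraph would leave an $O(k^4)$ error from the $O(k^{7/2})$ per-deviant-vertex terms, so $k^{3/4}$ really is needed. Second, the clean-vertex interaction error is a total bound $k^2\sum_u r_u^2\le k^{3/4}\cdot k^2\sum_u r_u$, not a per-vertex bound. Third, ``edits can only hurt'' must be made quantitative using the exact one-edge changes $-24k^3$, $-6k^3$ and $0$ (Lemma~A.3): these give every edit a net deficit cost of at least $12k^3$, so the edits are dominated by $\phi$ through its subadditivity.
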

\begin{proof}
The constructive upper bound for the deficit is Proposition~11.2. We
prove the matching lower bound with error $O(k^{15/4})=O(m^{15/8})$.
Choose an extremal graph $G$ with $m$ edges. The coarse lower construction
gives $\Delta(m)=O(m^2)=O(k^4)$, so the linear structural stability
theorem supplies a Clebsch blow-up $H$, with isolates, on the same vertex
set such that
\begin{equation}\tag{60}\label{eq:phase-structure}
 T:=|E(G)\mathbin\triangle E(H)|=O(k),\qquad
 N_H=16k+O(1),\qquad n_i^H=k+O(\sqrt{k}),
\end{equation}
and $\sum_i(n_i^H-N_H/16)^2=O(k)$. All constants in this proof are
independent of $m$ and $\alpha$.

Discard vertices isolated in both $G$ and $H$. Every remaining isolated
vertex of $H$ is incident with an edge in $E(G)\setminus E(H)$, so there
are at most $2T$ such vertices. The total number of remaining vertices is
therefore $O(k)$. This observation allows the counting estimates below to
be independent of the original number of isolates.

Let the wrong degree of a vertex be its number of incident pairs in
$E(G)\mathbin\triangle E(H)$. Call a vertex bad if its wrong degree
exceeds $k^{3/4}$. The bad set $B$ has size
\[
 b\leq2T/k^{3/4}=O(k^{1/4}).
\]
Let $G_0,H_0$ be the graphs remaining after deleting $B$. Write $n_i$ for
the sixteen active class sizes of $H_0$, put $N=\sum_i n_i$, $q=N/16$, and
$h_i=n_i-q$. Then
\begin{equation}\tag{61}\label{eq:phase-masses}
 q=k+O(k^{1/4}),\qquad n_i=k+O(\sqrt{k}),\qquad
 V:=\sum_i h_i^2=O(k).
\end{equation}
In particular every class is nonempty for all sufficiently large $m$. If
$A$ is the Clebsch adjacency matrix, put
\[
 a=\frac{h^TAh}{2k},\qquad v=V/k.
\]
The identity $A^2=3I-2A+2J$ shows that the eigenvalues on the sum-zero
subspace are $1$ and $-3$. Consequently
\begin{equation}\tag{62}\label{eq:phase-spectrum}
 -\frac32v\leq a\leq\frac12v.
\end{equation}

For a bad vertex $j$, let $p_{ji}$ be the proportion of the $i$th active
class of $H_0$ adjacent to $j$ in $G$. Denote this profile by $p_j$, and
abbreviate $d_j=d(p_j)$, $c_j=c(p_j)$. Let $t_-$ count the edges of $H_0$
missing from $G_0$, and let $t_+$ count the edges added to $H_0$ inside
$G_0$, together with the edges from $B$ to the isolated class of $H_0$.
Both are $O(k)$: the latter additional edges are all wrong pairs in the
original comparison with $H$. Let $t_n$ count those added edges inside
$G_0$ whose endpoints have distinct nonadjacent active types; thus
$0\leq t_n\leq t_+$.

The edge count is
\begin{equation}\tag{63}\label{eq:phase-edge-count}
 \frac{m-5N^2/32}{k}
 =a+\sum_{j\in B}d_j+\frac{t_+-t_-}{k}+O(k^{-1/4}).
\end{equation}
Indeed, $e(H_0)=5N^2/32+ka$. The exact number of edges from a bad
vertex to the active classes is $\sum_i n_i p_{ji}$; replacing this by
$kd_j$ has error $O(\sqrt{k})$ per bad vertex and thus $O(k^{3/4})$ in
total. There are at most $b^2=O(k^{1/2})$ edges within $B$. These are all
the remaining terms.

We next expand the pentagon count and give the needed error estimates
explicitly. Every good vertex has wrong degree at most $k^{3/4}$. If
$r_u$ denotes its wrong degree in the comparison of $G_0$ and $H_0$, then
\[
 \sum_u r_u^2\leq k^{3/4}\sum_u r_u=O(k^{7/4}).
\]
An intersecting pair of wrong edges fixes three vertices and belongs to
$O(k^2)$ five-sets; a disjoint pair fixes four vertices and belongs to
$O(k)$ five-sets. The number of five-sets containing at least two wrong
edges is therefore
\begin{equation}\tag{64}\label{eq:wrong-pair-error}
 O\left(k^2\sum_u r_u^2+kT^2\right)=O(k^{15/4}).
\end{equation}
For a five-set with at most one wrong pair, the change of its
induced-pentagon indicator is exactly its one-edge change. For every
other five-set, the difference between the actual change and the sum of
its one-edge changes has absolute value at most eleven. Hence (64)
bounds the entire higher-order error.

By the two-vertex coefficients of Lemma~A.3, one deleted template edge
loses $24k^3+O(k^{5/2})$ pentagons, and an added edge between distinct
nonadjacent active types has change $-6k^3+O(k^{5/2})$. Edges added
within a class or meeting an isolated type have one-edge change zero.
The estimates are uniform because of (61); excluding the two fixed
endpoints from the available old classes introduces only $O(k^2)$
further error. There are $O(k)$ wrong edges. Thus
\begin{equation}\tag{65}\label{eq:good-pentagons}
 I_5(G_0)=P(n)-24k^3t_--6k^3t_n+O(k^{15/4}).
\end{equation}

Pentagons containing at least two bad vertices number at most
$O(b^2k^3)=O(k^{7/2})$. For a fixed bad vertex, changing the background
on the other four vertices from $G_0$ to $H_0$ affects at most
$O(Tk^2)$ five-sets. Summing over $B$ costs $O(bTk^2)=O(k^{13/4})$.
In the perfect background $H_0$, the root link is obtained from $c(p_j)$
by giving its four distinct classes weights $n_i$. Each product of four
such weights is $k^4+O(k^{7/2})$. There are only $480$ path templates, so
this link is $k^4c_j+O(k^{7/2})$, uniformly in the profile. Summing the
latter errors over $B$ gives $O(k^{15/4})$. Isolated background vertices
lie in no such path. We conclude from (65) that
\begin{equation}\tag{66}\label{eq:phase-pentagon-count}
 I_5(G)=P(n)+k^4\sum_{j\in B}c_j-24k^3t_--6k^3t_n+O(k^{15/4}).
\end{equation}

It remains to expand the smooth template defect. For
$Q(x)=c_\ast E(x)^{5/2}-P(x)$, the Clebsch counts yield
\[
 Q(\mathbf1)=0,\qquad\nabla Q(\mathbf1)=0,\qquad
 \nabla^2Q(\mathbf1)=12I-\tfrac34J.
\]
Homogeneity, $\sum_i h_i=0$, and Taylor's theorem give
\[
 Q(n)=6q^3V+O(q^2\lVert h\rVert_2^3)=6k^3V+O(k^{7/2}).
\]
Also $e(H_0)=m+O(k^{5/4})$, by (61) and $V=O(k)$. Since
$(5/2)c_\ast m^{3/2}=12k^3$, another Taylor expansion gives
\[
 c_\ast m^{5/2}-c_\ast e(H_0)^{5/2}
   =12k^3\bigl(m-e(H_0)\bigr)+O(k^{7/2}).
\]
Combining these identities with (63) and (66) produces
\begin{equation}\tag{67}\label{eq:phase-deficit}
 \frac{\Delta(m)}{k^4}
 =6v+\sum_{j\in B}(12d_j-c_j)
    +12\frac{t_++t_-}{k}+6\frac{t_n}{k}+O(k^{-1/4}).
\end{equation}

The scalar function $\phi$ is subadditive and Lipschitz with constant
twelve. Equation (62) implies $6v\geq\phi(a)$, while
\[
 12(t_++t_-)/k\geq\phi\bigl((t_+-t_-)/k\bigr).
\]
Drop the nonnegative $t_n$ term and apply (63). If
$r=(m-5N^2/32)/k$, we obtain
\[
 \frac{\Delta(m)}{k^4}
 \geq\sum_{j\in B}(12d_j-c_j)
    +\phi\left(r-\sum_{j\in B}d_j\right)-O(k^{-1/4}).
\]
Since $N=16k+O(k^{1/4})$,
\begin{align*}
 r&=5(16k-N)-\frac{5}{32k}(16k-N)^2\\
  &=5(\alpha+\ell)+O(k^{-1/2}),\qquad
       \ell=\lfloor16k\rfloor-N\in\mathbb Z.
\end{align*}
Apply Lemma~12.1 and use the Lipschitz bound once more. The result is
\[
 \Delta(m)\geq20k^4(1-\alpha)\min\{3\alpha,1\}-O(k^{15/4}).
\]
Because $m^2=1600k^4$, this is the desired lower bound.
Proposition~11.2 has an upper error $O(m^{7/4})$, which is absorbed in
$O(m^{15/8})$.
\end{proof}

\begin{corollary}[The optimal uniform second-order coefficient]
\label{cor:uniform-coefficient}
The finite-edge deficit satisfies
\[
 \limsup_{m\to\infty}\frac{\Delta(m)}{m^2}=\frac1{120},\qquad
 \liminf_{m\to\infty}\frac{\Delta(m)}{m^2}=0.
\]
Its set of subsequential limits is the whole interval $[0,1/120]$. In
particular, the coefficient $1/120$ in the uniform estimate
\[
 \rho(C_5,m)\geq c_\ast m^{5/2}-\frac{m^2}{120}-O(m^{15/8})
\]
cannot be decreased.
\end{corollary}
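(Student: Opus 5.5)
The whole corollary should follow from Theorem~12.2 together with an analysis of the phase function \(f(\alpha)=(1-\alpha)\min\{3\alpha,1\}/80\) and the distribution of the phases \(\alpha_m\). By (59),
\[
 \frac{\Delta(m)}{m^2}=f(\alpha_m)+O(m^{-1/8}),
\]
so the set of subsequential limits of \(\Delta(m)/m^2\) equals the set of subsequential limits of \(f(\alpha_m)\). I will first compute the range of \(f\) on \([0,1)\). For \(\alpha\le1/3\), \(f(\alpha)=3\alpha(1-\alpha)/80\) is increasing and reaches \(1/120\) at \(\alpha=1/3\). For \(\alpha\ge1/3\), \(f(\alpha)=(1-\alpha)/80\) is decreasing from \(1/120\) toward \(0\). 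Hence \(\sup f=\max f=1/120\), attained only at \(\alpha=1/3\), and \(f\) is continuous on \([0,1)\) with \(f(0)=0\) and \(f(\alpha)\to0\) as \(\alpha\to1\). In particular \(f([0,1/3])=[0,1/120]\).

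Next I will show that the phases \(\alpha_m\) are dense in \([0,1)\); in fact every \(\beta\in[0,1)\) is a limit of some subsequence \(\alpha_{m_j}\). This is elementary because \(x_m=\sqrt{32m/5}\) increases with gaps
\[
 x_{m+1}-x_m=O(m^{-1/2})\to0
\]
and \(x_m\to\infty\). For each large integer \(n\), the sequence \(x_m\) crosses the interval \([n,n+1)\) in steps smaller than any fixed \(\delta\) once \(m\) is large. So there is \(m\) with \(|x_m-(n+\beta)|\le\delta\). Letting \(n\to\infty\) and \(\delta\to0\) gives a subsequence with \(\alpha_m\to\beta\). For \(\beta=0\) one may simply take \(m=40k^2\), where \(\alpha_m=0\) exactly and \(\Delta=0\).

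Combining these two facts, continuity of \(f\) shows that every value \(f(\beta)\in[0,1/120]\) is a subsequential limit of \(\Delta(m)/m^2\). Conversely, every subsequential limit lies in \([0,\sup f]=[0,1/120]\), since \(f\ge0\) and \(f\le1/120\). This gives \(\limsup=1/120\), realized along a subsequence with \(\alpha_m\to1/3\), and \(\liminf=0\). The uniform estimate is immediate from (59) and \(f\le1/120\). Its sharpness holds because a coefficient \(c<1/120\) with error \(o(m^2)\) would contradict \(\limsup\Delta(m)/m^2=1/120\).

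The only mildly delicate point is the remainder. I must make sure that \(O(m^{15/8})=o(m^2)\) is uniform in the phase, which Theorem~12.2 explicitly guarantees. No step here is a serious obstacle; the main care is in the density argument for the phases, where it suffices that the gaps of \(\sqrt{32m/5}\) tend to zero.
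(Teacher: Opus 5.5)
Your proposal is correct and follows essentially the paper's route: you reduce via Theorem~12.2 to the subsequential limits of $f(\alpha_m)$, use continuity of $f$ on the circle with maximum $1/120$ at $\alpha=1/3$, and show that every phase $\beta$ is a limit of the $\alpha_m$ (using $m=40k^2$ for $\beta=0$). The paper realizes $\beta\in(0,1)$ by taking $m_k$ nearest to $(5/32)(16k+\beta)^2$, whereas you use that consecutive gaps of $\sqrt{32m/5}$ tend to zero; these are equivalent, but you should take $\delta<\min\{\beta,1-\beta\}$ so that the fractional part does not wrap around.
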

\begin{proof}
The function $f$ is continuous on the unit circle, vanishes at phase zero,
and has maximum $1/120$ at phase $1/3$. For each fixed $0<\beta<1$,
choose an integer $m_k$ nearest to $(5/32)(16k+\beta)^2$. Factoring the
difference of squares gives
$\sqrt{32m_k/5}=16k+\beta+O(k^{-1})$, so $\alpha_{m_k}\to\beta$.
The exact equality sequence $m=40k^2$ provides phase zero. The theorem
therefore yields all values of the continuous range
$f([0,1])=[0,1/120]$ and excludes all other limit points.
\end{proof}

\section{The lattice correction at a fixed edge offset}
\label{sec:lattice-offset}

The phase law describes the $m^2$ scale. In a bounded edge neighborhood
of the equality sequence, the deficit instead has order $m^{3/2}$. Its
leading coefficient is a finite lattice optimization determined by the
Clebsch adjacency matrix.

For $h\in\mathbb Z^{16}$ with $\sum_i h_i=0$, put
\[
 Q(h)=\sum_{uv\in E(\mathrm{Cl})}h_uh_v=\tfrac12h^TAh.
\]
For an integer $j$, define
\begin{equation}\tag{68}\label{eq:lattice-objective}
 \Lambda(j)=\min_{\substack{h\in\mathbb Z^{16}\\\sum_i h_i=0}}
 \bigl(6\lVert h\rVert_2^2+12|j-Q(h)|\bigr).
\end{equation}
The minimum may be restricted to $\lVert h\rVert_2^2\leq2|j|$: the zero
vector gives value $12|j|$, and every vector outside that ball has
strictly greater cost. In particular this is a finite, explicitly
decidable optimization for each $j$.

\begin{theorem}[Fixed-offset asymptotic]\label{thm:fixed-offset}
For every fixed integer $j$, as the positive integer $k$ tends to infinity,
\begin{equation}\tag{69}\label{eq:fixed-offset-asymptotic}
 \rho(C_5,40k^2+j)
   =c_\ast(40k^2+j)^{5/2}-\Lambda(j)k^3+O_j(k^2).
\end{equation}
The coefficients satisfy
\[
 \Lambda(j)=12j\quad(j\geq0),\qquad
 4|j|\leq\Lambda(j)\leq12|j|\quad(j<0),
\]
and $\Lambda(-12)=48$. Thus the fixed-offset correction is not symmetric
under $j\mapsto-j$.
\end{theorem}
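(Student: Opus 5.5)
The plan is to prove the two inequalities in (69) separately. The upper bound comes from an explicit lattice construction. The lower bound comes from specializing the proof of Theorem~12.2 to the regime where the deficit is $O(k^3)$, where every error term becomes $O(k^2)$.

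\emph{Upper bound.} Fix a minimizer $h$ of (68); it is bounded in terms of $j$. Take the Clebsch blow-up with class sizes $n=k\mathbf1+h$. Since $\sum_i h_i=0$ and $A\mathbf1=5\mathbf1$, it has exactly $e=40k^2+Q(h)$ edges. The template defect $c_*E(n)^{5/2}-P(n)$ is a homogeneous expression with vanishing value and gradient at $k\mathbf1$ and Hessian $12I-\tfrac34J$. Its Taylor expansion gives $6k^3\|h\|_2^2+O_j(k^2)$, because $h$ is bounded and the cubic terms scale as $k^2$.

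We then correct the edge count to $m=40k^2+j$.
\begin{itemize}
\item If $j>Q(h)$, add $j-Q(h)$ edges inside a single class. Their endpoints are true twins, so no induced pentagon is created or destroyed.
\item If $j<Q(h)$, delete $Q(h)-j$ template edges. Each deletion loses $24k^3+O(k^2)$ pentagons, by the two-vertex coefficient of Lemma~A.3.
\end{itemize}
In both cases we compare with $c_*m^{5/2}$, which changes by $12k^3+O(k^2)$ per edge since $(5/2)c_*m^{3/2}=12k^3$. Each corrected edge therefore costs $12k^3$, for a total $\Lambda(j)k^3+O_j(k^2)$.

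\emph{Lower bound.} Let $G$ be extremal with $m=40k^2+j$ edges.
\begin{itemize}
\item By Corollary~10.2, $\Delta(m)=O_j(k^3)$, so the normalized deficit satisfies $\varepsilon m=O_j(1)$.
\item Theorem~9.3 then gives a Clebsch blow-up $H$ with $T=|E(G)\triangle E(H)|=O_j(1)$ and $V=\sum_i(n_i-N/16)^2=O_j(1)$.
\item Consequently there are no bad vertices in the sense of Section~12; the whole comparison involves only boundedly many wrong pairs.
\end{itemize}

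Write $N=16k-\ell$ with $\ell$ an integer. Formula (54) together with $|m-E(n)|\le T$ gives $5k\ell=O_j(1)$. Hence $\ell=0$ for large $k$, and $h=n-k\mathbf1$ is an integer zero-sum vector of bounded norm.

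Next we expand the pentagon count. Any five-set containing two wrong pairs contains at least three fixed vertices, so there are $O_j(k^2)$ such five-sets. Therefore
\[
 I_5(G)=P(n)-24k^3t_--6k^3t_n+O_j(k^2).
\]
Here added edges within a class or meeting the isolated class contribute zero, exactly as in (65). The edge count is $m=40k^2+Q(h)+t_+-t_-$.

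Combining this with the Taylor expansion of the template defect, as in (67), yields
\[
 \frac{\Delta(m)}{k^3}\ge 6\|h\|_2^2+12(t_++t_-)+O_j(k^{-1}).
\]
Since $t_++t_-\ge|j-Q(h)|$, the right side is at least $\Lambda(j)-O_j(k^{-1})$. The main obstacle is this step: making sure every error term in the phase-law argument, including the cross terms between the template imbalance $h$ and the wrong edges, is genuinely $O(k^2)$ rather than $O(k^{5/2})$. Boundedness of $h$ and $T$ should give this, since each single-edge coefficient is then exact up to $O(k^2)$.

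\emph{Coefficients.} On the sum-zero space the eigenvalues of $A$ are $1$ and $-3$, so
\[
 -\tfrac32\|h\|_2^2\le Q(h)\le\tfrac12\|h\|_2^2 .
\]
\begin{itemize}
\item For $j\ge0$: we have $6\|h\|_2^2+12|j-Q(h)|\ge 12Q(h)+12(j-Q(h))=12j$, and $h=0$ attains this value.
\item For $j<0$: we have $6\|h\|_2^2\ge-4Q(h)$, so $6\|h\|_2^2+12|j-Q(h)|\ge-4Q(h)+4|j-Q(h)|\ge4|j|$. The bound $\Lambda(j)\le12|j|$ comes from $h=0$.
\item For $\Lambda(-12)=48$: we need $h$ with $\|h\|_2^2=8$ and $Q(h)=-12$. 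I would look for a $\pm1$ vector supported on eight vertices lying in the $(-3)$-eigenspace, for example a suitable restriction related to $w_u=(-1)^{u_1+u_2+u_3}$. Otherwise this is a finite check over the bounded ball of (68). Since $48\ne12\cdot12=\Lambda(12)$, this also shows the asymmetry.
\end{itemize}
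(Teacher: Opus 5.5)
Your proposal is essentially the paper's proof, step for step: the upper bound uses the lattice construction from $k\mathbf 1+h$ with within-class additions or template deletions, and the lower bound uses Theorem~9.3 to get bounded $T$ and $V$ and to force $N=16k$, followed by the one-edge expansion with the coefficients $-24k^3,-6k^3,0$ of Lemma~A.3, the template Hessian term $6k^3\lVert h\rVert_2^2$, and the edge-power term $12k^3(j-Q(h))$; the paper also explicitly discards common isolates so that five-sets with two wrong pairs number $O_j(k^2)$, which your appeal to (65) covers only implicitly. The one piece you leave unexecuted is the witness for $\Lambda(-12)\le 48$, and your guess is right: taking $h_u=w_u=(-1)^{u_1+u_2+u_3}$ when $u_3=u_4$ and $h_u=0$ otherwise, i.e.\ $h=\tfrac12(\chi_7+\chi_{11})$ with $\chi_a(u)=(-1)^{a\cdot u}$, gives an integral sum-zero $(-3)$-eigenvector with $\lVert h\rVert_2^2=8$, hence $Q(h)=-12$ and objective value $48$, matching your lower bound $4|j|$.
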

\begin{proof}
Write $m=40k^2+j$ and $\Delta(m)=c_\ast m^{5/2}-\rho(C_5,m)$. Start
from the balanced blow-up of class size $k$. If $j\geq0$, add a disjoint
matching with $j$ edges; if $j<0$, delete $|j|$ edges. Since a single
deletion affects $O(k^3)$ pentagons, this gives the preliminary estimate
$\Delta(m)=O_j(k^3)$.

Let $G$ be extremal with $m$ edges. Its normalized deficit is
$O_j(k^{-2})$, so Theorem~9.3 supplies a Clebsch blow-up $H$, with
isolates, for which
\[
 |E(G)\mathbin\triangle E(H)|=O_j(1),\qquad
 \sum_i(n_i-N/16)^2=O_j(1),
\]
\[
 N=\sqrt{32m/5}+O_j(k^{-1})=16k+O_j(k^{-1}).
\]
The integer $N$ therefore equals $16k$ for all sufficiently large $k$.
Consequently $n_i=k+h_i$ for an integer vector $h$ satisfying
$\sum_i h_i=0$ and $\lVert h\rVert_2=O_j(1)$, and
\begin{equation}\tag{70}\label{eq:offset-edge-count}
 e(H)=40k^2+Q(h).
\end{equation}
After discarding common isolates, the two graphs have $O(k)$ vertices:
every remaining isolated vertex of $H$ is incident with one of the
$O_j(1)$ wrong edges.

Let $t_+$ and $t_-$ be the numbers of added and deleted edges in passing
from $H$ to $G$, and let $t_n$ count added edges between distinct
nonadjacent active classes. All these numbers are $O_j(1)$, and
\[
 j-Q(h)=t_+-t_-.
\]
The one-edge pentagon changes are $-24k^3+O_j(k^2)$ for a deleted
template edge, $-6k^3+O_j(k^2)$ for an added edge of the latter
nonadjacent type, and zero for an edge within a class or meeting an
isolated type. Five-sets containing at least two wrong pairs number
$O_j(k^2)$, by the same pair counting used in (64). Therefore
\[
 I_5(G)=P(k\mathbf1+h)-24k^3t_--6k^3t_n+O_j(k^2).
\]
The template Hessian gives
\[
 c_\ast e(H)^{5/2}-P(k\mathbf1+h)
   =6k^3\lVert h\rVert_2^2+O_j(k^2).
\]
Since $m-e(H)=j-Q(h)=O_j(1)$, the comparison between the two edge
powers is
\[
 c_\ast m^{5/2}-c_\ast e(H)^{5/2}
   =12k^3\bigl(j-Q(h)\bigr)+O_j(k).
\]
Combining these formulas yields
\begin{align*}
 \Delta(m)
 &=k^3\bigl(6\lVert h\rVert_2^2+12(t_++t_-)+6t_n\bigr)+O_j(k^2)\\
 &\geq k^3\bigl(6\lVert h\rVert_2^2+12|j-Q(h)|\bigr)-O_j(k^2)\\
 &\geq\Lambda(j)k^3-O_j(k^2).
\end{align*}

For the reverse inequality, fix a minimizing vector $h$ in (68). The
class sizes $k+h_i$ are positive for large $k$. If $j-Q(h)>0$, add that
many edges within a class; if it is negative, delete its absolute value
in template edges. There is ample capacity for either operation once
$k$ is large. All added edges are within a class, so $t_n=0$, and the
number of edits is exactly $|j-Q(h)|$. The same expansion gives a graph
with exactly $m$ edges whose deficit is $\Lambda(j)k^3+O_j(k^2)$. This
proves (69).

On the sum-zero subspace, the eigenvalues of $A$ are $1$ and $-3$. Hence
\[
 -\tfrac32\lVert h\rVert_2^2\leq Q(h)\leq\tfrac12\lVert h\rVert_2^2.
\]
For $j\geq0$ this implies
\[
 6\lVert h\rVert_2^2+12|j-Q(h)|
 \geq12Q(h)+12|j-Q(h)|\geq12j,
\]
and $h=0$ attains equality. For $j<0$, it implies
\[
 6\lVert h\rVert_2^2+12|j-Q(h)|
 \geq-4Q(h)+4|j-Q(h)|\geq4|j|.
\]
The upper bound $12|j|$ again follows from $h=0$.

To evaluate the example $j=-12$, use the Cayley labeling
$u\in\mathbb F_2^4$ and the characters $\chi_a(u)=(-1)^{a\cdot u}$.
For the bit masks $a=7$ and $a=11$, each character has Clebsch eigenvalue
$-3$. The vector
\[
 h=\tfrac12(\chi_7+\chi_{11})
\]
is integral, has sum zero, has eight nonzero entries in $\{-1,1\}$, and
satisfies $Ah=-3h$. Thus $\lVert h\rVert_2^2=8$, $Q(h)=-12$, and its
objective value is $48$. The lower bound $4|j|$ proves that this value is
optimal.
\end{proof}

\begin{corollary}[Two exact finite-edge values]
\label{cor:two-exact-offsets}
For all sufficiently large integers $k$,
\[
 \rho(C_5,40k^2+1)=192k^5,\qquad
 \rho(C_5,40k^2-1)=192k^5-24k^3.
\]
Up to isolated vertices, all extremal graphs at $40k^2+1$ edges are
obtained from the balanced Clebsch blow-up of class size $k$ by adding
one edge either within a class or incident with an isolated vertex. At
$40k^2-1$ edges, all extremal graphs are either a balanced blow-up with
one edge deleted, or a Clebsch blow-up in which two adjacent classes
have sizes $k+1,k-1$ and the other fourteen classes have size $k$.
\end{corollary}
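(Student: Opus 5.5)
The plan is to sharpen the proof of Theorem~13.1 from an asymptotic statement into an exact one. That proof bounds the deficit of an extremal graph by an expression whose leading coefficient is a nonnegative \emph{integer} combination of edit counts. For $j=\pm1$, this forces the extremal graph into a finite list of configurations, and each of these is then counted exactly.

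\textbf{Step 1: the integer structure of the leading coefficient.} Let $G$ be extremal with $m=40k^2+j$ edges and $j=\pm1$. The proof of Theorem~13.1 supplies a Clebsch blow-up $H$ with $n_i=k+h_i$, $\sum_ih_i=0$ and $N=16k$. Its edit counts $t_\pm,t_n$ are $O(1)$, and
\[
 \Delta(m)=k^3\bigl(6\lVert h\rVert_2^2+12(t_++t_-)+6t_n\bigr)+O(k^2).
\]
The bracket is an integer that is at least $\Lambda(j)$. The construction side of Theorem~13.1 gives $\Delta(m)\le\Lambda(j)k^3+O(k^2)$. The bracket takes values in $6\mathbb Z$, so any bracket exceeding $\Lambda(j)$ would cost at least $6k^3$ more than the optimum. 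That excess dominates the $O(k^2)$ remainder for large $k$. Hence every extremal graph satisfies $t_++t_-=|j-Q(h)|$ and $t_n=0$, and $h$ attains the minimum in (68).

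\textbf{Step 2: enumerating the minimizers.} Since $\Lambda(1)=12$, we have $\lVert h\rVert_2^2\le2$. A nonzero $h$ is then $e_u-e_v$, with $Q(h)=-1$ or $0$ according to whether $u,v$ are adjacent. Either value gives a cost of at least $24$, so $h=0$. This leaves exactly one added edge with $t_n=0$, that is, an edge within a class or an edge meeting an isolated vertex. For $j=-1$ the cost is also $12$. It is achieved by $h=0$ with one deleted template edge, and by $h=e_u-e_v$ with $u\sim v$, where $Q=-1$ and no edits are needed. Every other $h$ has $\lVert h\rVert_2^2\ge4$, or $\lVert h\rVert_2^2=2$ with $u,v$ nonadjacent, and therefore has cost at least $24$. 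The edit set consists only of the $t_\pm$ wrong pairs, so in each case $G$ equals $H$ up to those pairs and common isolates. This yields the list of graphs in the statement.

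\textbf{Step 3: exact counts.} An edge inside a class makes its two endpoints true twins, so no induced $C_5$ contains both. An edge meeting an isolated vertex creates a vertex of degree at most one, which cannot lie on an induced $C_5$. Both operations therefore keep the count at $192k^5$. For the unbalanced blow-up, $I_5$ is affine in each class size. The linear terms cancel because $h$ sums to zero. The mixed term is $-1$ times the number of pentagons through the adjacent pair $u,v$, which is $24k^3$ since each Clebsch edge lies in $192\cdot5/40=24$ pentagons. This gives $192k^5-24k^3$. Deleting one edge $xy$ of a balanced blow-up destroys exactly the $24k^3$ pentagons through it.

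\textbf{Main obstacle.} The point requiring care is whether deleting $xy$ also \emph{creates} new induced pentagons. Such a pentagon would contain $x,y$ as a nonadjacent pair, together with an induced path $x\,a\,b\,c\,y$ avoiding other adjacencies. I will rule this out using the fact that adjacent Clebsch classes $u,v$ have codegree $0$, combined with the triangle-freeness of the Clebsch graph. I expect a short case check on the classes of $a,b,c$ to settle it, since the new pentagon would project onto a closed $5$-walk through the edge $uv$ in $\mathrm{Cl}$. If that case check fails, a finite verification on $\mathrm{Cl}$ with multiplicities will settle it instead. Once this is done, each listed graph has exactly the claimed count, and Step~1 excludes every other graph for large $k$.
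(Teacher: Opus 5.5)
Your proposal follows the paper's proof essentially step for step. The paper also sets $L=6V+12(t_++t_-)+6t_n$ and uses that $L$ is a multiple of six with $\Delta(m)=Lk^3+O(k^2)$ uniformly over the bounded family of templates. It compares with the $12k^3+O(k)$ constructions to force $L=12$, and reads off the two cases $(V,t_++t_-,t_n)=(0,1,0)$ and $(2,0,0)$. That step uses the parity fact that a sum-zero integer vector has even squared norm, which is what justifies your claim that every other $h$ has $\lVert h\rVert_2^2\geq4$. The exact counts then come from true twins, a degree-one vertex, and multilinearity of the pentagon polynomial.

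The one step you left open is mis-described, although the fact you name settles it immediately. After deleting the template edge $xy$, a newly induced pentagon must contain both $x$ and $y$, since every other five-set is unaffected, and there they form a nonadjacent pair. Such a pentagon does not come with an induced path $x\,a\,b\,c\,y$: a five-set inducing that path induces $P_5$, not $C_5$. Instead, two nonadjacent vertices of $C_5$ are at distance two, so $x$ and $y$ would need a common neighbor. Since $x$ and $y$ lie in adjacent Clebsch classes, such a neighbor would lie in a class adjacent to both, giving a triangle in $\mathrm{Cl}$. Hence no pentagon is created, and neither a case check on $a,b,c$ nor a finite computation is needed. This is exactly the paper's one-line argument.

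A minor point: your count $192\cdot5/40=24$ assumes every edge lies in equally many pentagons. Either cite edge-transitivity, or use the paper's direct count of $4!=24$ orderings of the remaining four generators.
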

\begin{proof}
For $j=1$, the lattice minimum is $12$ by the theorem. For $j=-1$, it is
also $12$: the zero vector attains this value, and every nonzero integral
sum-zero vector has squared norm at least two, so its first cost term
alone is at least twelve.

For an extremal graph and its template from the proof above, write
\[
 L=6V+12(t_++t_-)+6t_n.
\]
This is an integer multiple of six and is at least twelve. The same
proof gives $\Delta(m)=Lk^3+O(k^2)$, with a constant uniform over the
bounded family of permitted $h$ and wrong-edge counts. The elementary
constructions for $j=\pm1$ have deficit $12k^3+O(k)$. Hence $L>12$,
which would imply $L\geq18$, is impossible for all sufficiently large
$k$.

The nonnegative integer solutions to $L=12$ are therefore
$V=0$, $t_++t_-=1$, $t_n=0$, or $V=2$, $t_++t_-=t_n=0$. Here we used
that an integer sum-zero vector has even squared norm. In the second
case $h$ has one entry $1$, one entry $-1$, and all others zero, so
$Q(h)$ is zero or $-1$.

For $j=1$, only the first case is possible. The single modification is
an added edge, and $t_n=0$ says that it lies within a class or meets an
isolated vertex. Such an edge changes no pentagon count: in the first
case its endpoints are true twins, and in the other cases any formerly
isolated endpoint has degree one. This gives the asserted value and
classification.

For $j=-1$, the first case deletes one edge of the balanced template.
Exactly $24k^3$ old pentagons use that edge. There are no newly induced
pentagons: the deleted endpoints have no common neighbor, whereas a
nonadjacent pair in a five-cycle must have one. In the second case
$Q(h)=-1$ forces the two perturbed classes to be adjacent.
Multilinearity of the pentagon polynomial gives exactly
\[
 P(k\mathbf1+h)=192k^5+60k^4\sum_i h_i-24k^3=192k^5-24k^3.
\]
These constructions attain the common value, completing both the upper
bound and the equality classification.
\end{proof}

\section{Eventual polynomiality at every fixed offset}
\label{sec:eventual-polynomial}

The lattice coefficient has a further consequence. For each fixed edge
offset, the exact extremal function is eventually a polynomial, and the
polynomial can be determined by a finite enumeration. This statement
does not provide an explicit threshold beyond which the polynomial is
valid.

\begin{theorem}[Computable eventual polynomial]
\label{thm:eventual-polynomial}
For every integer $j$, there are computable integers $a_j,b_j,c_j$ and
an integer $k_0(j)$ such that, for every integer $k\geq k_0(j)$,
\begin{equation}\tag{71}\label{eq:eventual-polynomial}
 \rho(C_5,40k^2+j)
 =192k^5+\bigl(12j-\Lambda(j)\bigr)k^3+a_jk^2+b_jk+c_j.
\end{equation}
Moreover, a finite procedure produces a finite list of graph
constructions that, for all sufficiently large $k$, contains all
extremal graphs up to isomorphism and the addition of isolates. The
proof determines the eventual polynomial and this list; it does not
give an explicit value or an algorithm for $k_0(j)$.
\end{theorem}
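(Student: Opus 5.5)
We reduce the extremal problem at $m=40k^2+j$ to a finite family of ``bounded-defect configurations'', each of whose pentagon counts is exactly a polynomial in $k$, and then select the maximum among finitely many polynomials.

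\textbf{Step 1: a uniformly bounded normal form.} Let $G$ be extremal with $m=40k^2+j$ edges. The proof of Theorem~13.1 gives a Clebsch template $H$ with classes $k+h_i$, where $\sum_i h_i=0$, and a symmetric difference $E(G)\mathbin\triangle E(H)$ of size $O_j(1)$. It also gives $\Delta(m)=Lk^3+O_j(k^2)$ with
\[
 L=6\lVert h\rVert_2^2+12(t_++t_-)+6t_n .
\]
Since $L\geq\Lambda(j)$, the constructions of Theorem~13.1 force $L=\Lambda(j)$ for large $k$. This is the same integrality argument as in Corollary~13.2: $L$ is a multiple of six. Hence $\lVert h\rVert_2^2$ and $t_\pm$ lie in an explicit finite set.

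The wrong edges span a set $S$ of $O_j(1)$ vertices. We encode $G$ by a finite \emph{configuration} $\mathcal C$ consisting of:
\begin{itemize}
 \item the vector $h$;
 \item a labelled graph on $S$ recording which Clebsch class (or the isolated type) each vertex of $S$ has;
 \item which of the pairs within $S$, and between $S$ and isolated vertices, are edges.
\end{itemize}
All other pairs agree with the template. One subtlety needs care: a wrong edge from $S$ to a generic class vertex would require that vertex to lie in $S$, and it does. So $\mathcal C$ fully determines $G$ up to isomorphism and isolates. Only finitely many configurations arise, since $|S|\leq 2(t_++t_-)$ is bounded. Some of these have the isolated-type vertices of $S$ as endpoints of wrong edges; we bound those by the same count.

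\textbf{Step 2: exact polynomiality per configuration.} For a fixed configuration $\mathcal C$, the graph $G_{\mathcal C}(k)$ consists of the classes $k+h_i$ minus the $|S\cap\text{class }i|$ special vertices, together with $S$. Its number of induced pentagons is obtained by classifying each five-set by its intersection pattern with $S$ and the multiset of generic classes it meets. For each pattern, the count is a product of binomial coefficients in $k+h_i-s_i$ times an indicator computed inside a bounded graph. It is therefore a polynomial $P_{\mathcal C}(k)$ with rational coefficients of degree at most five, and it takes integer values. The edge count is also polynomial, and we keep exactly those $\mathcal C$ whose edge count equals $40k^2+j$ identically. This identity is checked on the leading terms; the constant term is a finite condition.

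All these polynomials are computable by finite enumeration of at most $16+|S|$ vertex types. Their top terms agree with $192k^5+(12j-\Lambda(j))k^3$ exactly when $L=\Lambda(j)$. This follows from $c_*(40k^2+j)^{5/2}=192k^5+12jk^3+O(k)$ together with the expansion in Theorem~13.1. The $k^4$ term vanishes because $\sum_i h_i=0$ and the wrong-edge contributions start at order $k^3$.

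\textbf{Step 3: selection.} For large $k$, $\rho(C_5,40k^2+j)=\max_{\mathcal C}P_{\mathcal C}(k)$ over the finite admissible family. Distinct polynomials are eventually totally ordered, so for $k\geq k_0(j)$ the maximum is a single polynomial $P^*$, namely the lexicographically largest coefficient vector. The extremal graphs are exactly the configurations with $P_{\mathcal C}=P^*$; this is the finite list claimed in the theorem.

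Writing $P^*$ as $192k^5+(12j-\Lambda(j))k^3+a_jk^2+b_jk+c_j$ requires two further facts:
\begin{itemize}
 \item The coefficients are integers. This holds because the binomial-product sums have the Clebsch symmetry; alternatively, an integer-valued polynomial whose values agree with the displayed integer coefficients through degree three forces $a_j,b_j,c_j$ to be integers. I would verify this carefully, perhaps through the multilinear structure in the class sizes.
 \item There is no $k^4$ term, as shown in Step 2.
\end{itemize}
The threshold $k_0(j)$ is ineffective because the stability constants in Theorem~9.3 come from compactness.

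\textbf{Main obstacle.} The delicate point is Step 1: showing that every extremal graph is described by a \emph{bounded} configuration in which $S$ captures all deviations, including vertices of changed type. Theorem~9.3 controls edits only relative to some template, so I would first try to argue that bounded edit count already forces boundedly many affected vertices. I would then verify that integer-valuedness yields integer coefficients in the displayed degrees.
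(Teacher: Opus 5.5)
Your architecture matches the paper's proof. You force $L=\Lambda(j)$ by the multiple-of-six gap, encode an extremal graph by the bounded set $S$ of wrong-pair endpoints with their template types, attach one polynomial to each admissible configuration, and take the lexicographically largest. Your ``main obstacle'' is not one. Define $S$ as all endpoints of wrong pairs relative to the stability template $H$. Then every vertex outside $S$ has exactly its template adjacencies, each vertex's type is simply its class in $H$, and $|S|\le 2(t_++t_-)\le 2|j|$. The genuine gap is the integrality of $a_j,b_j,c_j$, which the statement asserts. Your fallback argument is false. An integer-valued polynomial whose coefficients in degrees five, four and three are integers can still have non-integral lower coefficients; for example $192k^5+\binom{k}{2}$ has $a=\tfrac12$ and $b=-\tfrac12$. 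The appeal to ``Clebsch symmetry'' is not an argument.

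The missing idea, used in the paper's formula (75), concerns the ordinary vertices (those outside $S$) of one class $i$ in $G_{\mathcal C}(k)$: they are pairwise false twins. They are mutually nonadjacent, and their adjacency to every other vertex, including those in $S$, is dictated by type alone, because every wrong pair has both endpoints in $S$. No two nonadjacent vertices of $C_5$ have equal neighbourhoods, so an induced pentagon contains at most one ordinary vertex per class. Hence every binomial $\binom{k+h_i-s_i}{r}$ with $r\ge 2$ in your pattern sum carries a zero indicator. The count then collapses to $\sum b(S',I)\prod_{i\in I}(k+h_i-s_i)$, summed over $S'\subseteq S$ and $I\subseteq[16]$ with $|S'|+|I|=5$, where $b(S',I)\in\{0,1\}$. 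This polynomial visibly has integer coefficients. With this observation in place, the rest of your argument goes through as the paper's does.
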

\begin{proof}
We first strengthen the discrete conclusion in the fixed-offset
argument. Its template has integer class sizes $k+h_i$ with
$\sum_i h_i=0$ and bounded $V=\lVert h\rVert_2^2$, and a bounded number
$T=t_++t_-$ of wrong pairs. As in that proof, put
\[
 L=6V+12T+6t_n.
\]
The edge-offset constraint and the definition of $\Lambda(j)$ give
$L\geq\Lambda(j)$. Both numbers are integer multiples of six. The same
uniformly bounded-template expansion gives
\[
 \Delta(40k^2+j)=Lk^3+O_j(k^2).
\]
On the other hand, the construction in Theorem~13.1 gives
$\Delta(40k^2+j)\leq\Lambda(j)k^3+O_j(k^2)$. The positive gap between
distinct multiples of six therefore forces
\begin{equation}\tag{72}\label{eq:energy-equality}
 L=\Lambda(j)
\end{equation}
for every extremal graph and all sufficiently large $k$. Since
$\Lambda(j)\leq12|j|$, this implies the explicit bounds
\begin{equation}\tag{73}\label{eq:explicit-discrete-bounds}
 V\leq2|j|,\qquad T\leq|j|.
\end{equation}
The initial boundedness and the uniform error in this argument come
from strong stability, before imposing the explicit bounds (73); thus
there is no circular assumption about the extremal family.

We now describe a finite list depending only on $j$. Enumerate:
\begin{enumerate}
\item integer vectors $h\in\mathbb Z^{16}$ with $\sum_i h_i=0$ and
      $\lVert h\rVert_2^2\leq2|j|$;
\item a set $X$ of $s\leq2|j|$ labeled exceptional vertices, each
      assigned a type $\tau(x)\in\{0,1,\ldots,16\}$;
\item a set $F\subseteq\binom{X}{2}$ of at most $|j|$ wrong pairs, with
      every vertex of $X$ incident with a member of $F$.
\end{enumerate}
Type zero is the isolated type. The case $F=X=\varnothing$ is allowed.
The template adjacency between two exceptional vertices is Clebsch
adjacency between their positive types, and is zero if either type is
zero. A wrong pair toggles this template adjacency. Thus $F$ determines
$t_+,t_-$, and $t_n$ without any further choices. Retain precisely the
data satisfying
\begin{equation}\tag{74}\label{eq:admissible-data}
 Q(h)+t_+-t_-=j,\qquad
 6\lVert h\rVert_2^2+12|F|+6t_n=\Lambda(j).
\end{equation}
All these tests use integer arithmetic, and the enumeration is finite
with explicit bounds.

Each retained datum defines a graph for all sufficiently large $k$. Let
$e_i$ be the number of exceptional vertices of type $i$, and put
$k+h_i-e_i$ ordinary vertices in each active class $i$. Together with
the exceptional vertices, the active class sizes of the underlying
template are exactly $k+h_i$. Include the $e_0$ exceptional vertices of
type zero, place all template edges, and toggle the pairs in $F$. The
counts $k+h_i-e_i$ are nonnegative for sufficiently large $k$; this
capacity requirement has an explicit threshold for each datum. The
edge count is exactly $40k^2+j$ by (74). Further isolates may be added
freely.

Conversely, every extremal graph for sufficiently large $k$ has such a
representation. Take the endpoints of all its wrong pairs relative to
the strong-stability template as $X$. There are at most $2T\leq2|j|$ of
them. Outside $X$, all adjacencies agree with the template, and the
remaining type-zero vertices are isolated. The bounds and both
admissibility tests follow from (73), the exact edge count, and (72).

For each retained datum $D$, its pentagon count is an integer
coefficient polynomial $P_D(k)$. To see this without an automorphism
division, take a subset $S\subseteq X$ of exceptional vertices and a
subset $I\subseteq[16]$ of ordinary types, with $|S|+|I|=5$. Use one
formal ordinary representative of every type in $I$, and test whether
the resulting five-vertex graph is an induced pentagon. Let its
indicator be $b_D(S,I)$. All ordinary vertices of the same type are
false twins, even in their adjacencies to $X$, since wrong pairs have
both endpoints in $X$. An induced pentagon cannot contain two such
vertices. It follows that the exact count is
\begin{equation}\tag{75}\label{eq:exact-construction-polynomial}
 P_D(k)=\sum_{\substack{S\subseteq X,\ I\subseteq[16]\\|S|+|I|=5}}
              b_D(S,I)\prod_{i\in I}(k+h_i-e_i).
\end{equation}
Each five-set is counted once. This formula establishes both
integrality of the coefficients and an explicit procedure to compute
them.

Every retained datum has the same three leading coefficients. Indeed,
the fixed-offset expansion applies to each of these fixed finite
modifications and gives
\[
 c_\ast(40k^2+j)^{5/2}-P_D(k)=\Lambda(j)k^3+O_j(k^2).
\]
Since
\[
 c_\ast(40k^2+j)^{5/2}=192k^5+12jk^3+O_j(k),
\]
the polynomial $P_D$ has coefficient $192$ at degree five, zero at
degree four, and $12j-\Lambda(j)$ at degree three. Write its remaining
coefficients as $(a_D,b_D,c_D)$.

The retained family is nonempty. A minimizing lattice vector in (68),
followed by the addition of within-class edges or deletion of template
edges as in the fixed-offset proof, supplies such a datum. The wrong
pairs may be chosen disjoint; their number is at most $|j|$, and their
endpoints satisfy the stated bound.

For all sufficiently large $k$, every retained graph is feasible, and
every extremal graph is represented. Thus
\[
 \rho(C_5,40k^2+j)=\max_D P_D(k).
\]
Among finitely many polynomials, the one with lexicographically largest
coefficient vector, read from highest degree to lowest, is maximal for
all sufficiently large arguments. Choose the largest triple
$(a_D,b_D,c_D)$ and denote it by $(a_j,b_j,c_j)$. This proves (71).
Retaining all data whose polynomial equals that maximum gives exactly
the eventual extremal constructions, allowing duplicate isomorphic
presentations. For $j=0$ the list consists just of the balanced
template, giving $192k^5$ as required.

Finally, the finite polynomial comparisons and the capacity thresholds
can be computed from the list. The proof that all extremal graphs
enter that list uses the qualitative compactness step in strong
stability. We have not quantified its threshold, so the present
procedure computes the eventual polynomial and the eventual
construction list, rather than certifying their validity at an
arbitrary supplied finite value of $k$.
\end{proof}

\subsection{Explicit eventual polynomials at small offsets}

The finite procedure can be carried out directly for the first five
offsets. Its results, valid for all sufficiently large integers $k$, are
\begin{center}
\begin{tabular}{rrl}
\toprule
$j$&$\Lambda(j)$&$\rho(C_5,40k^2+j)$\\
\midrule
$-2$&$24$&$192k^5-48k^3+6k^2$\\
$-1$&$12$&$192k^5-24k^3$\\
$0$&$0$&$192k^5$\\
$1$&$12$&$192k^5$\\
$2$&$24$&$192k^5+6k^2$\\
\bottomrule
\end{tabular}
\end{center}
The cases $j=0,\pm1$ were proved above. We explain both the finite
check and matching constructions for $j=\pm2$.

For both signs, the lattice energy is $24$. For $j=2$ this follows from
Theorem~13.1. For $j=-2$, a nonzero sum-zero integer vector has $V\geq2$;
if $V=2$, its edge quadratic form is zero or $-1$, and its total lattice
cost is at least $24$. Vectors with $V\geq4$ have first cost term at
least $24$, while the zero vector attains $24$.

At an extremum the energy equality therefore leaves only
\[
 (V,T)=(0,2),(2,1),(4,0),\qquad t_n=0.
\]
For $V=2$ the class deviation vector has one entry $1$ and one entry
$-1$. For $V=4$ it has two entries of each sign. Substituting these
possibilities in the exact polynomial formula (75) gives only the
corrections
\[
 \{0,6k^2\}
\]
relative to $192k^5$ when $j=2$. When $j=-2$, the corrections relative
to $192k^5-48k^3$ are precisely
\[
 \{0,2k,6k^2,-6k^2,-2k^2+2k,2k^2+2k,-2k^2,2k^2\}.
\]
The enumeration uses the following convention. The sixteen Clebsch types
remain labeled, and the deviation vector $h$ is not identified under
Clebsch automorphisms. Exceptional vertices are identified only under
isomorphisms of their wrong-pair graph $F$ that preserve their types.
Thus a single wrong pair has unordered endpoint types, a $P_3$ has a
fixed center and unordered leaf types, and $2K_2$ has two unordered
endpoint pairs that are themselves unordered. Repeated types are allowed;
the exceptional vertices themselves are distinct. This convention gives
\begin{center}
\begin{tabular}{crr}
\toprule
$(V,T)$ & $j=2$ & $j=-2$\\
\midrule
$(0,2)$ & $762$ & $1,060$\\
$(2,1)$ & $0$ & $3,200$\\
$(4,0)$ & $120$ & $1,200$\\
\midrule
Total & $882$ & $5,460$\\
\bottomrule
\end{tabular}
\end{center}

Here is a direct derivation of every entry. An allowed added edge has
both endpoints in one active class or at least one endpoint of isolated
type. There are $16+17=33$ unordered type pairs of this kind. With $V=0$
and $j=2$, two disjoint additions therefore give $\binom{34}{2}=561$
presentations. For incident additions, an isolated center gives
$\binom{18}{2}=153$ unordered leaf-type pairs. An active center has only
its own type and the isolated type available for either leaf, giving
$16\binom{3}{2}=48$ presentations. The first total is consequently
$561+153+48=762$.
For $j=-2$, a deleted edge has one of the $40$ Clebsch edge types.
Two disjoint deletions give $\binom{41}{2}=820$ presentations, while
incident deletions give $16\binom{6}{2}=240$. Their sum is $1,060$.

For $V=2$, write $h=e_u-e_v$, so $Q(h)=-A_{uv}$ is zero or $-1$.
One modification cannot then produce offset $2$. To produce offset $-2$,
the ordered pair $(u,v)$ must be adjacent and the modification must be a
deletion. There are $80\cdot40=3,200$ choices.

For $V=4$, the support of $h$ consists of two positive and two negative
vertices. The Clebsch graph has $60$ induced copies of $2K_2$ and $480$
of $P_3+K_1$. To verify these counts directly, the six common nonneighbors
of the endpoints of an edge induce a matching of size three: each has
four neighbors in the union of the endpoint neighborhoods and hence one
among those six vertices. Thus the first count is $40\cdot3/2=60$.
There are $16\binom{5}{2}=160$ induced paths of order three. The union
of the neighborhoods of their three vertices has size $13$, so each path
has exactly three possible isolated fourth vertices. This gives
$160\cdot3=480$.
The condition $Q(h)=2$ requires two edges within the sign classes and no
cross edge, giving $2\cdot60=120$ sign assignments. The condition
$Q(h)=-2$ requires exactly two cross edges and no edge within a sign
class: a within-class edge together with the required three or more cross
edges would create a triangle. The two cross edges are either disjoint,
with four balanced sign assignments, or form a path with an isolated
vertex, with two. Hence the last negative-offset count is
$4\cdot60+2\cdot480=1,200$.

The supplied integer-arithmetic programs independently check this branch
census and evaluate (75) on every presentation. The two offsets give
respectively two and eight distinct polynomials, exactly as listed above.
For $j=-1,0,1$ the same convention gives $120,1,33$ presentations;
there are $6,496$ in total over the five offsets. In both displayed
correction lists for $j=\pm2$, the maximum is $6k^2$ for every positive
$k$. Eventual global extremality uses Theorem~14.1, not merely this finite
comparison.

For an explicit construction at $j=2$, add a new vertex to the balanced
blow-up and join it to two old vertices in distinct nonadjacent Clebsch
classes. There are six induced path templates between those types, and
the two internal vertices can each be chosen in $k$ ways. Thus exactly
$6k^2$ induced pentagons are added. At $j=-2$, delete two edges with a
common endpoint and their other endpoints in distinct classes. Each
deleted edge lies in $24k^3$ pentagons, and exactly $6k^2$ pentagons
contain both. Deletions create no new induced pentagons in the
triangle-free template. This realizes $192k^5-48k^3+6k^2$ and completes
the two examples.


\appendix
\section{Local stability in edit distance}\label{app:local-stability}

We give a self-contained local argument. It permits fractional edge
values and triangles, which may arise during the modifications in the
proof. Accordingly, throughout this appendix put
\[
 P(W)=\frac1{10}\int_{[0,1]^5}
       \prod_{ij\in E(C_5)}W(x_i,x_j)
       \prod_{\substack{i<j\\ij\notin E(C_5)}}(1-W(x_i,x_j))
       \,dx_1\cdots dx_5,
\]
\[
 F(W)=\frac{P(W)}{E(W)^{5/2}},\qquad
 R_{\mathrm{ind}}(W)=5^{5/2}F(W).
\]
These definitions agree with $P(W)=C(W)$ and $R_{\mathrm{ind}}(W)=R(W)$
for every graphon in the main text. Let $W_0$ be the Clebsch graphon on
a fixed partition $A_1,\ldots,A_{16}$ into sets of measure $a=1/16$.
As before, $\mathcal E_{\mathrm{Cl}}$ includes balanced Clebsch graphons
on any positive active mass, with isolates on the complement. Set
\[
 d_1(W,\mathcal E_{\mathrm{Cl}})
   =\inf_{U\in\mathcal E_{\mathrm{Cl}}}\lVert W-U\rVert_1.
\]
All measurable partitions are allowed in the infimum.

\begin{theorem}[Local quadratic stability]\label{thm:local-stability}
There exist $\delta_0>0$ and $c_{\mathrm{loc}}>0$ such that every graphon
$W:[0,1]^2\to[0,1]$ with $\lVert W-W_0\rVert_1<\delta_0$ satisfies
\begin{equation}\tag{76}\label{eq:local-stability}
 R_\ast-R_{\mathrm{ind}}(W)
   \geq c_{\mathrm{loc}}d_1(W,\mathcal E_{\mathrm{Cl}})^2.
\end{equation}
Equality in $R_{\mathrm{ind}}(W)\leq R_\ast$ holds precisely when
$W\in\mathcal E_{\mathrm{Cl}}$ up to null sets.
\end{theorem}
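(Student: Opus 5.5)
The plan is to prove (76) by a \emph{defect decomposition} around $W_0$. Near $W_0$, the deficit $F(W_0)-F(W)$ should dominate a sum of \emph{linear} costs for adjacency errors plus a \emph{quadratic} cost for class-mass imbalance. Since $d_1$ is bounded by the linear quantities plus the square root of the quadratic one, and all these quantities are small, the square in (76) follows. Concretely, for $W$ with $\lVert W-W_0\rVert_1<\delta_0$, I will partition $[0,1]$ into three kinds of points. First, a bad set $B_0$ of points whose row $W(x,\cdot)$ is at $L^1$ distance at least a small fixed $\eta$ from all seventeen prototype rows $r_0=0,r_1,\ldots,r_{16}$. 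Second, a set $Z$ of points whose row is near $r_0$. Third, sets $M_i$ of points whose row is near $r_i$. The prototypes are $\rho$-separated by (41), so this classification is unambiguous. Markov's inequality gives $\mu(B_0)=O(\lVert W-W_0\rVert_1)$ and $|\mu(M_i)-1/16|=O(\lVert W-W_0\rVert_1)$. The repaired template $T$ is the Clebsch step graphon on $M=\bigcup_iM_i$ with isolates elsewhere. Moving $B_0$ and $Z$ into the isolate set and replacing $W$ by $T$ on $M\times M$ costs at most $2\mu_{B_0}+2D_Z+e_M$ in $L^1$. Here $D_Z$ is the edge mass at $Z$ and $e_M=\lVert W-T\rVert_{L^1(M^2)}$. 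Balancing the masses then costs $O(V(T))$.

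The core is the four-defect inequality
\[
 F(W_0)-F(W)\ge c_B\mu_{B_0}+c_ZD_Z+c_Ee_M+c_HV(T)^2 .
\]
I will prove it by expanding $F$ around the unbalanced template $T$ and treating the three kinds of perturbation separately. For $e_M$, I will use the first variation of $F$ in the edge direction at $W_0$. Its kernel is $q-\kappa$ from Lemma~2.3, suitably normalised. The two-vertex coefficients give a strictly negative slope for deleting template edges ($-24$ against the edge gain $12$) and for fractional additions between nonadjacent classes ($-6-12$). Additions within a class or to isolates cost exactly the edge-density slope $12$. Hence the first-order change is at most $-c\,e_M$. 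The second-order remainder is $O(e_M\cdot\lVert W-W_0\rVert_1)$ and can be absorbed for small $\delta_0$. For rows in $B_0$ and $Z$, I will compare with Lemma~11.1: a vertex with fractional profile $p$ contributes $k^4c(p)$ against an edge cost $12d(p)$. The bound $12d-c\ge g(d/5)\ge0$ is strict away from the prototypes. For $Z$ rows near zero it is linear in $d$, since $g(z)\approx60z$, and for $B_0$ rows it is bounded below by a constant, which gives $c_B,c_Z$. Rows meeting $B_0$ in both coordinates contribute only $O(\mu_{B_0}^2)$. For the masses I will use the Hessian $\nabla^2Q(\mathbf1)=12I-\tfrac34J$, which is positive definite on the sum-zero subspace, together with homogeneity. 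This gives $c_HV(T)^2$ up to a cubic error.

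I expect the main obstacle to be the \emph{cross terms}: making the expansions uniform when all four defects are present at once, with fractional values and triangles allowed. A pentagon touching two perturbed pairs gives a second-order term, and such terms must be bounded by $o(1)$ times the linear defects, not by $\lVert W-W_0\rVert_1^2$ alone. I will first treat $B_0$ and $Z$ by an exact one-vertex link computation using the universal link bound, Lemma~3.1. I will then handle $e_M$ via multilinearity of the induced density in the edge values on $M$. In that expansion, every term of order at least two in the perturbation carries a factor of $e_M$ times an $O(\delta_0)$ quantity. Finally, since $d_1\le C(\mu_{B_0}+D_Z+e_M)+CV(T)$ and each of these terms is at most a fixed constant, the inequality $x\ge x^2/C$ for such terms yields (76). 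The equality case then follows because zero deficit forces all four defects to vanish.
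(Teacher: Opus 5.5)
Your outline follows the paper's own architecture: the same split into $B_0$, $Z$, $M_1,\ldots,M_{16}$ with a fixed row threshold $\eta$, the four-defect inequality (91), the $L^1$ accounting $2\mu_{B_0}+2D_Z+e_M+4V$, and the final squaring. The step that fails is your source for $c_B$. The envelope $12d(p)-c(p)\ge g(d(p)/5)$ from Lemma~11.1 is \emph{not} strict away from the prototypes. Since $g(1)=0$, it gives nothing at degree exactly five and only arbitrarily small bounds near that degree. Such profiles can nevertheless be far from $\mathcal M$: take $p\equiv5/16$, or the indicator of a five-set containing a Clebsch edge, which is at Hamming distance at least two from every neighborhood. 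So no uniform $c_B>0$ follows.

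What you need is the neighborhood inequality with its equality case, Lemma~A.2: $c(N)\le12|N|$ for Boolean $N$, with equality only for $N=\varnothing$ or $N=N_{\mathrm{Cl}}(v)$. Off $\mathcal M$ the Boolean deficit is then an integer at least one, and no two members of $\mathcal M$ differ in a single coordinate. Multilinearity therefore gives (79), $12\sum_ip_i-c(p)\ge\operatorname{dist}_1(p,\mathcal M)/272$. With (80), this is a perfect-background deficit of at least $a^4\eta/17$ on $B_0$. The estimates (83)--(84), in particular $|L_1(x)-a^4c(p(f))|\le3\lVert W-W_0\rVert_1$, transfer it to $W$ once $\delta_0$ is small compared with $\eta$. (The Boolean equality data behind Lemma~11.1, plus compactness, would also suffice, but its envelope does not.) For $Z$, the perfect-background heuristic $g(z)\approx60z$ does not transfer at all, because the error in (84) is additive rather than proportional to $d(x)$. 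There the bound $L\le\tfrac14d(x)^2E$ in the actual background, i.e.\ (85), must carry the argument, with $\eta E_0/2<\kappa_0/4$. You do mention this bound via Lemma~3.1.

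Your remainder estimate for the edge repair is also wrong as stated. Second-order terms in which the two perturbed pairs share a vertex are bounded by $\int_Me_x^2\,dx\le(\sup_{x\in M}e_x)\,e_M$, where $e_x=\int_M|W(x,y)-T(x,y)|\,dy$. On $M$ you only know $e_x\le\eta+\nu$, with $\eta$ fixed. For example, a set of measure $\sigma$ of rows each carrying error $\eta/2$ has $e_M\asymp\sigma\eta$ and such a term of order $\eta e_M$, which is not $O(\delta e_M)$. The remainder is therefore $O((\eta+\delta)e_M)$. It is absorbed only because $\eta$ is chosen small relative to the sign margin $g=12a^3$ of (88). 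The paper does exactly this: (87) gives $|q_W-q_T|\le6(e_x+e_y+e)$ uniformly along $W_t=T+t(W|_{M^2}-T)$, and $12\eta<g/8$ keeps the signs of $q_{W_t}-\kappa(W_t)$ fixed. This yields $\frac{d}{dt}F(W_t)\le-c_Ee_M$. With these two corrections your plan becomes the paper's proof.
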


\subsection{Three finite properties of the Clebsch graph}

Write the five generators of $\mathrm{Cl}$ as $s_1,\ldots,s_5$, with
$\sum_i s_i=0$. Their only nonempty dependence is the full five-element
set. Relative to a vertex $u$, its neighbors are $u+s_i$ and its ten
nonneighbors, other than itself, are $u+s_i+s_j$, indexed by
$\{i,j\}\in\binom{[5]}{2}$. A pair vertex is adjacent to the two
neighbors indexed by its elements; two pair vertices are adjacent
exactly when their pairs are disjoint. In particular, the ten pair
vertices induce the Petersen graph $\mathrm{KG}(5,2)$.

We will also use the following counts, including their normalizations.
Every closed walk of length five uses each generator once, and is a
simple pentagon. Hence $\mathrm{Cl}$ has $16\cdot5!/10=192$ pentagons
and each vertex belongs to $5!/2=60$ of them. A fixed edge belongs to
$4!=24$ pentagons: orient it from its specified first endpoint and
permute the remaining four generators. For a fixed nonedge from $0$ to
$s_i+s_j$, a rooted closed walk reaches the second endpoint at step two
or at step three. There are $2!3!$ orders in each case. Dividing the
total by the two orientations gives $12$ pentagons through that pair.

\begin{lemma}[The neighborhood inequality]\label{lem:neighborhood}
Add a vertex to $\mathrm{Cl}$ with neighborhood $N\subseteq V(\mathrm{Cl})$,
and let $c(N)$ count the induced pentagons containing it. Then
\[
 c(N)\leq12|N|.
\]
Equality holds exactly when $N=\varnothing$ or $N=N_{\mathrm{Cl}}(v)$
for a vertex $v$.
\end{lemma}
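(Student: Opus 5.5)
The plan is to read the inequality off the profile envelope of Lemma~11.1, restricted to Boolean profiles, and to take the equality cases from the exhaustive Boolean census recorded in its proof. For a Boolean profile $p=\mathbf 1_N$ the quantity $c(p)$ of Section~11 is exactly the number $c(N)$ of induced pentagons through the added vertex. Indeed, $c(p)$ counts induced $P_4$ subsets of $\mathrm{Cl}$ whose endpoints lie in $N$ and whose internal vertices lie outside it, and each such path closes up with the new vertex into a unique induced pentagon through it. Also $d(p)=|N|$. So it suffices to show $c(N)\le 12|N|$ for all $N\subseteq V(\mathrm{Cl})$, with equality exactly when $N=\varnothing$ or $N=N_{\mathrm{Cl}}(v)$.

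Write $d=|N|$ and split at $d=5$.

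\emph{Case $d\le5$.} Lemma~11.1 gives
\[
 c(N)\le\tfrac{12}{5}d^2\le 12d .
\]
The second inequality is strict unless $d=0$ or $d=5$.

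\emph{Case $d\ge5$.} The Boolean bound $c(N)\le60$ from the same lemma gives
\[
 c(N)\le 60\le 12d ,
\]
and the second inequality is strict unless $d=5$.

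So the inequality holds in every case, and equality forces either $d=0$, which gives $N=\varnothing$ with $c(N)=0$, or $d=5$ with $c(N)=60$.

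\emph{The case $d=5$, $c(N)=60$.} The verified census in Lemma~11.1 states that $c(N)=60$ holds precisely at the sixteen open and sixteen closed neighborhoods. A closed neighborhood has six elements, so a five-element $N$ with $c(N)=60$ must be an open neighborhood $N_{\mathrm{Cl}}(v)$. Conversely, the attainment statement of Lemma~11.1 with $p_i=1$ on $N_{\mathrm{Cl}}(v)$ gives $c(N_{\mathrm{Cl}}(v))=60=12\cdot5$. The same conclusion can be cross-checked against the listed degreewise maxima $0,0,6,18,36,60,60,54,\ldots$, which lie strictly below $12d$ except at $d=0$ and $d=5$.

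The only step that is not purely formal is the equality characterization at $d=5$, since it rests on the finite enumeration. I would also try to make it conceptual. Equality in the Motzkin--Straus step forces $N$ to be an independent five-set, with all endpoint pairs nonadjacent so that each contributes its full six paths. Then I would show that every maximum independent set of $\mathrm{Cl}$ is a neighborhood. Translate the set to contain $0$; the remaining elements are pairwise intersecting two-subsets of $[5]$, and a family of four such subsets is a star at some $i$. The resulting set is then $\{0\}\cup\{s_i+s_j: j\ne i\}=N_{\mathrm{Cl}}(s_i)$. If this succeeds it replaces the appeal to the census, though citing the census is already a complete argument.
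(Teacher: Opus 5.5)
Your proof is correct, and it is not circular: the proof of Lemma~11.1 uses only the six-paths-per-nonedge count, the Motzkin--Straus argument, and the exhaustive Boolean enumeration, none of which relies on the appendix. It does, however, take a different route from the paper, whose proof is done by hand. For each $u\in N$, the paper labels the ten nonneighbors of $u$ by two-subsets of $[5]$, so that they induce the Petersen graph $\mathrm{KG}(5,2)$. It then shows that the number $c_u(N)$ of counted paths ending at $u$ is at most twice the size of the cut between $T=N\cap U$ and its complement. Since the maximum cut of the Petersen graph has twelve edges, this gives $c_u(N)\le 24$, and summing yields $2c(N)=\sum_{u\in N}c_u(N)\le 24|N|$. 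At equality, every $u\in N$ must realize a maximum cut, and these are shown to be the stars $F_i$ or their complements. That leaves open neighborhoods, closed neighborhoods and one size-seven configuration, and the last two are excluded directly. Your route is far shorter, but you understate how much of it depends on computation. The inequality itself for $|N|\ge 6$ also rests on the $65{,}536$-profile check, not only the equality case at $d=5$. There the envelope $\tfrac{12}{5}d^2$ exceeds $12d$, and the bound $c(N)\le 60$ is proved in the paper only by enumeration. Your conceptual substitute at $|N|=5$ is correct: the count $c(N)\le 6\cdot\#\{\text{nonadjacent pairs in }N\}$ forces independence, and an independent five-set translated to contain $0$ is $\{0\}\cup\{s_i+s_j:j\ne i\}=N_{\mathrm{Cl}}(s_i)$. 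But it removes the census only from the equality step. What the paper's argument buys is that Appendix~A, which is announced as self-contained, needs no enumeration at all. Theorem~A.1 is therefore computation-free, and the stability theorems built on it use no computation beyond the main flag certificate; your route would import the $2^{16}$-profile check into them. The paper's argument also gives the stronger per-vertex bound $c_u(N)\le 24$, and it explains structurally, via maximum cuts of the Petersen graph, why neighborhoods are the extremal sets.
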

\begin{proof}
Deleting the added vertex from a pentagon leaves an induced $P_4$ with
its endpoints in $N$ and internal vertices outside $N$. For $u\in N$,
let $c_u(N)$ count these paths with endpoint $u$, so that
$2c(N)=\sum_{u\in N}c_u(N)$.

Use the pair labeling relative to $u$. Let $U$ be its ten pair vertices,
$T=N\cap U$, and $D=\{i:u+s_i\in N\}$. A path counted by $c_u(N)$
has the form $u,u+s_i,b,v$, where $b\in U\setminus T$, $v\in T$, the
pairs $b,v$ are disjoint, and $i\in b\setminus D$. These conditions
are also sufficient. Thus, writing $P=\mathrm{KG}(5,2)$,
\begin{equation}\tag{77}\label{eq:neighborhood-path-count}
 c_u(N)=\sum_{b\in U\setminus T}(2-|b\cap D|)d_T(b)
   \leq2e_P(T,U\setminus T).
\end{equation}

Here the maximum cut of $P$ has size twelve. To verify this directly,
$P$ has parameters $(10,3,0,1)$: disjoint pairs have no common
neighbor, while two intersecting pairs have the unique complementary
pair as common neighbor. It has no four-cycle. For any fixed edge,
choose one of the two other neighbors at each endpoint. The two choices
are nonadjacent and have a unique common neighbor; they therefore
complete the edge to a pentagon. This gives four pentagons through
every edge, and $15\cdot4/5=12$ pentagons in total. Every cut leaves
at least one edge of each odd cycle uncut. Since an edge lies in four
pentagons, at least three of the fifteen edges are uncut. A star
\[
 F_i=\{\{i,j\}:j\ne i\}
\]
is an independent four-set with twelve edges leaving it, so the bound
is attained.

Moreover every twelve-edge cut has a side of size four, and that side
is independent. Indeed, a side of size at most three has at most nine
outgoing edges, and a side of size five has an odd number of outgoing
edges. Four pairwise intersecting two-subsets form a star: if
$\{1,2\},\{1,3\}$ are present and some pair omits $1$, that pair must
be $\{2,3\}$ and there can be no fourth distinct pair. Thus the
maximum cut sides are precisely $F_i$ and $U\setminus F_i$.

Equation (77) proves $c_u(N)\leq24$. If equality holds and $T=F_i$,
every pair outside this star has two neighbors in it. The loss term in
(77) vanishes only if $D\subseteq\{i\}$. Consequently
$N=N_{\mathrm{Cl}}(u+s_i)$ or $N=N_{\mathrm{Cl}}[u+s_i]$, of size five
or six. If $T=U\setminus F_i$, every pair in $F_i$ has three neighbors
in $T$, and the pairs in $F_i$ use all five indices. Hence
$D=\varnothing$ and $N=\{u\}\cup(U\setminus F_i)$ has size seven;
in this case $u$ has no neighbor in $N$.

Suppose $N\ne\varnothing$ and $c(N)=12|N|$. Then every $u\in N$ has
$c_u(N)=24$. The size-six case is impossible: at the center of a closed
neighborhood, the first internal vertex of a counted path cannot lie
outside $N$, so its count is zero. The size-seven case is also
impossible. The six pair vertices in that case contain an edge, since
an independent set in $\mathrm{KG}(5,2)$ has at most four vertices by
the preceding intersecting-pair argument. An endpoint of this edge has
a neighbor in $N$, whereas the equality classification at that endpoint
permits a neighbor in $N$ only in the size-six case. Therefore $N$ is
a vertex neighborhood. Conversely, adding a clone of a vertex creates
exactly its sixty pentagons. No such pentagon can contain both clones,
since $C_5$ has no pair of twins. Thus $c(N)=60=12|N|$ for a vertex
neighborhood, and the empty case is immediate.
\end{proof}

\begin{lemma}[Two-vertex coefficients]\label{lem:two-vertex}
Add two vertices with old neighborhoods $N_{\mathrm{Cl}}(i)$,
$N_{\mathrm{Cl}}(j)$, and prescribe whether they are adjacent. The
numbers of induced pentagons containing both added vertices and three
old vertices are
\begin{center}
\begin{tabular}{lrr}
\toprule
&added pair absent&added pair present\\
\midrule
$i=j$&$0$&$0$\\
$ij\in E(\mathrm{Cl})$&$0$&$24$\\
$i\ne j,\ ij\notin E(\mathrm{Cl})$&$12$&$6$\\
\bottomrule
\end{tabular}
\end{center}
\end{lemma}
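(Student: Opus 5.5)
The plan is to reduce each entry of the table to a pentagon count in $\mathrm{Cl}$ itself, using the fact that an added vertex $x$ with neighborhood $N_{\mathrm{Cl}}(i)$ is a false twin of $i$ with respect to the old vertices. Let $x,y$ be the added vertices, with neighborhoods $N_{\mathrm{Cl}}(i)$ and $N_{\mathrm{Cl}}(j)$. Throughout I use two facts: $C_5$ has no twins, and every nonadjacent pair on an induced five-cycle has exactly one common neighbor on that cycle.

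\emph{The case $i=j$.} Here $x$ and $y$ have the same old neighborhood, so on any five-set containing both of them they are twins (true twins if adjacent, false twins if not). No induced pentagon can contain a pair of twins, so both entries are $0$.

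\emph{The case $ij\in E(\mathrm{Cl})$, added pair absent.} Since $\mathrm{Cl}$ is triangle-free, $N_{\mathrm{Cl}}(i)\cap N_{\mathrm{Cl}}(j)=\varnothing$, so $x$ and $y$ have no common neighbor at all. But $x$ and $y$ would be a nonadjacent pair on the cycle, which must have a common neighbor there. So the entry is $0$.

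\emph{Reduction to pentagons through $ij$ when the adjacency of $x,y$ matches that of $i,j$.} In the remaining cases ($ij$ an edge with $xy$ present, and $ij$ a nonedge with $xy$ absent), I will show that the map $x\mapsto i$, $y\mapsto j$ is a bijection between the relevant pentagons and the induced pentagons of $\mathrm{Cl}$ through the pair $ij$.
\begin{itemize}
\item Take such a pentagon on $\{x,y,a,b,c\}$. If one of $a,b,c$ were $i$, then $x$ and $i$ would be false twins inside that five-set, which is impossible. The same holds for $j$.
\item So $a,b,c$ avoid $i$ and $j$. Substituting $x\to i$ and $y\to j$ then preserves every adjacency, giving an induced pentagon of $\mathrm{Cl}$ containing $i$ and $j$.
\item The map is inverted by the reverse substitution.
\end{itemize}
The counts recorded before Lemma A.2 then give $24$ pentagons through an edge and $12$ through a nonedge, which are the two entries.

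\emph{The mixed case: $ij$ a nonedge, $xy$ present.} This case has no twin reduction, and I expect it to be the main obstacle; I will count directly in the Cayley coordinates. Write $j=i+s_p+s_q$, and let the cycle be $x,y,a,b,c$. Its constraints are:
\begin{itemize}
\item $a\in N_{\mathrm{Cl}}(j)\setminus N_{\mathrm{Cl}}(i)$ and $c\in N_{\mathrm{Cl}}(i)\setminus N_{\mathrm{Cl}}(j)$;
\item $a$ and $c$ are nonadjacent;
\item $b$ is a common neighbor of $a$ and $c$ lying outside $N_{\mathrm{Cl}}(i)\cup N_{\mathrm{Cl}}(j)$.
\end{itemize}

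The common neighbors of $i$ and $j$ are $i+s_p$ and $i+s_q$. Excluding them forces
\[
 a=j+s_k,\qquad c=i+s_l,\qquad k,l\notin\{p,q\}.
\]
Then $a+c=s_p+s_q+s_k+s_l$.
\begin{itemize}
\item If $k\ne l$, this sum equals the fifth generator, so $a$ and $c$ are adjacent. Such a choice creates a chord and must be excluded.
\item If $k=l$, then $a+c=s_p+s_q$, and $a,c$ are nonadjacent with exactly two common neighbors, $c+s_p$ and $c+s_q$.
\end{itemize}

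For each of these two candidates, say $b=i+s_l+s_p$, I will check that $b\notin N_{\mathrm{Cl}}(i)\cup N_{\mathrm{Cl}}(j)$. Using that the only dependence among the generators is the full sum, the two memberships reduce respectively to $s_l+s_p$ being a single generator and to $s_l=s_q+s_m$ for some generator $s_m$. Neither is possible. This also rules out all remaining chords to $x$ and $y$.

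This gives three choices of $k$ and two choices of $b$, and each cycle is counted once because its orientation is fixed by the order $x,y$. The entry is therefore $6$, which completes the table.
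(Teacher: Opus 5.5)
Your proposal is correct and follows the paper's proof essentially step for step. You use twins for $i=j$, the missing common neighbor when $ij$ is an edge and the added pair is absent, and the clone substitution that recovers the pair counts $24$ and $12$. That substitution needs $x$ and $i$ to agree on $y$, which, as you use implicitly, holds exactly because the prescribed adjacency matches that of $ij$. In the mixed case, your parametrization $a=j+s_k$, $c=i+s_k$ with two choices of $b$ is the paper's count: after translating $i$ to $0$ and writing $j=s_1+s_2$, each pair from $\{3,4,5\}$ is matched with its missing generator, giving $3\cdot2=6$.
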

\begin{proof}
For equal types the added vertices are twins, whether adjacent or not,
so cannot both occur in an induced pentagon. For distinct types, when
their prescribed adjacency agrees with that of $i,j$, the counts are
the ordinary pair counts $24$ and $12$ above. A pentagon cannot use
two vertices from the same clone class, so the old vertices of types
$i,j$ introduce no extra completions. When $ij$ is an edge but the
added pair is absent, the two added vertices have no common old
neighbor. This rules out a pentagon, in which every nonadjacent pair
has a common neighbor.

For the last entry translate $i$ to $0$ and write $j=s_1+s_2$. Their
exclusive old neighbors are, respectively,
\[
 \{s_3,s_4,s_5\},\qquad
 \{s_3+s_4,s_3+s_5,s_4+s_5\}.
\]
A pentagon containing the added edge must join one exclusive neighbor
on either side through a common old vertex outside
$N_{\mathrm{Cl}}(i)\cup N_{\mathrm{Cl}}(j)$. The two exclusive
neighbors must be nonadjacent. Thus a pair from
$\binom{\{3,4,5\}}{2}$ is matched with its unique missing generator,
giving three choices. For example $s_3+s_4$ and $s_5$ have common
neighbors $s_1+s_5,s_2+s_5$, both outside the forbidden union. There
are exactly two completions for each choice, giving six in total.
\end{proof}

\begin{lemma}[The template Hessian]\label{lem:template-hessian}
For the Clebsch vertex-weight polynomials $P(x)$ and $E(x)$,
\[
 \nabla^2\log(P/E^{5/2})(\mathbf1)=\frac{J-16I}{256}.
\]
Consequently there is $c_H>0$ such that, for class masses $s_i$
sufficiently close to $a$, with $q=\sum_i s_i$ and $T_s$ the
corresponding Clebsch step graphon,
\begin{equation}\tag{78}\label{eq:hessian-defect}
 F(W_0)-F(T_s)\geq c_H\sum_i(s_i-q/16)^2.
\end{equation}
\end{lemma}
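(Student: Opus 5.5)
The plan is to compute the Hessian directly from the pair counts recorded at the start of this appendix, and then obtain (78) by a second-order Taylor expansion that uses homogeneity.

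\emph{The Hessian.} Both $P$ and $E$ are multilinear in the class weights, so their Hessians have zero diagonal. The off-diagonal entry $\partial_i\partial_jP(\mathbf1)$ is the number of Clebsch pentagons through the pair $\{i,j\}$. By the counts above this is $24$ for an edge and $12$ for a nonedge, so
\[
 \nabla^2P(\mathbf1)=24A+12(J-I-A)=12A+12J-12I.
\]
Also $\nabla^2E(\mathbf1)=A$. The remaining data are $P(\mathbf1)=192$, $\nabla P(\mathbf1)=60\mathbf1$, $E(\mathbf1)=40$ and $\nabla E(\mathbf1)=5\mathbf1$. Using $\nabla^2\log f=\nabla^2f/f-\nabla f\nabla f^T/f^2$, I will compute
\[
 \nabla^2\log P(\mathbf1)=\frac{12A+12J-12I}{192}-\frac{3600}{192^2}J,\qquad
 \nabla^2\log E(\mathbf1)=\frac{A}{40}-\frac{25}{1600}J.
\]
In $\nabla^2\log P-\tfrac52\nabla^2\log E$ the $A$ terms cancel, since $12/192=\tfrac52\cdot\tfrac1{40}$. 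The $J$ coefficients combine as $(16-25+10)/256=1/256$, and the identity coefficient is $-12/192=-16/256$. This gives $(J-16I)/256$. The cancellation of $A$ is a useful consistency check: it reflects that the ratio is stationary in every direction, not only along $\mathbf1$.

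\emph{Deriving (78).} For a Clebsch step graphon $T_s$ with class masses $s_i$ and isolates elsewhere, $F(T_s)=P(s)/E(s)^{5/2}$ exactly, by the same normalization of $P$ and $E$ as weighted counts. The function $\Phi=\log(P/E^{5/2})$ is homogeneous of degree $0$, so $\Phi(s)=\Phi(\mathbf1+h/b)$ with $b=q/16$ and $h=s-b\mathbf1$, where $\sum_ih_i=0$. At $\mathbf1$ the gradient of $\Phi$ vanishes, either because $\nabla P/P=\tfrac52\nabla E/E=\tfrac{5}{16}\mathbf1$ or because $\mathbf1$ is the weighted maximum of Theorem 1.1. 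On the sum-zero subspace the Hessian equals $-I/16$, which is negative definite. Taylor's theorem with a cubic remainder, uniform near $\mathbf1$, then gives
\[
 \Phi(\mathbf1)-\Phi(s)\geq\frac1{32}\frac{\lVert h\rVert_2^2}{b^2}-O\bigl(\lVert h/b\rVert^3\bigr).
\]
For $s$ close to $a\mathbf1$ we have $b$ close to $1/16$, and the cubic term is absorbed. Exponentiating with $\exp$ Lipschitz from below on a bounded range, together with $F(W_0)=e^{\Phi(\mathbf1)}$, gives $F(W_0)-F(T_s)\geq c_H\lVert h\rVert_2^2$.

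The main obstacle is keeping the normalizations of $P$ consistent between the polynomial and graphon settings, for example the factor $1/10$ and whether a pentagon is counted as a set. Since $\Phi$ is a logarithm, any constant factor drops out of the Hessian. The rest is routine.
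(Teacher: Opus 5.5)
Your proposal is correct and is essentially the paper's proof. The same values $P(\mathbf1)=192$, $\nabla P(\mathbf1)=60\mathbf1$, $\nabla^2P(\mathbf1)=12(J-I+A)$, $E(\mathbf1)=40$, $\nabla E(\mathbf1)=5\mathbf1$, $\nabla^2E=A$ give $(J-16I)/256$, and (78) follows from the Taylor expansion on the sum-zero hyperplane after rescaling by $q/16$; your exponentiation step spells out what the paper compresses into ``use scale invariance''.

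One aside is inaccurate but harmless, since nothing in your argument uses it. The cancellation of $A$ is a second-order feature of the Clebsch counts ($24-12=\tfrac52\cdot\tfrac{192}{40}$) and is not a consequence of the vanishing gradient. The gradient vanishes for every vertex-transitive template, for example $C_5$ itself, where $A$ does not cancel.
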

\begin{proof}
Let $A$ be the Clebsch adjacency matrix. The counts above give
\[
 P(\mathbf1)=192,\qquad\nabla P(\mathbf1)=60\mathbf1,\qquad
 \nabla^2P(\mathbf1)=12(J-I+A),
\]
whereas $E(\mathbf1)=40$, $\nabla E(\mathbf1)=5\mathbf1$ and
$\nabla^2E=A$. Hence the logarithmic Hessian is
\[
 \frac{12(J-I+A)}{192}-\frac{3600J}{192^2}
 -\frac52\left(\frac A{40}-\frac{25J}{40^2}\right)
 =\frac{J-16I}{256}.
\]
The gradient vanishes at $\mathbf1$. On the hyperplane $\sum_i h_i=0$,
Taylor's formula gives
\[
 \log F(\mathbf1+h)
 =\log F(\mathbf1)-\frac{\lVert h\rVert_2^2}{32}
     +O(\lVert h\rVert_2^3).
\]
Apply this with $h_i=16s_i/q-1$ and use scale invariance. Since $q$
stays close to one, this gives (78) with a uniform positive constant.
\end{proof}

\subsection{Fractional row profiles}

Let $\mathcal M\subseteq\{0,1\}^{16}$ consist of the zero vector and
the sixteen Clebsch neighborhood vectors. Extend $c(N)$ multilinearly
to $p\in[0,1]^{16}$:
\[
 c(p)=\sum_{\substack{U\subseteq V(\mathrm{Cl})\\
                      \mathrm{Cl}[U]\cong P_4}}
       \prod_{\substack{i\in U\\i\text{ an endpoint}}}p_i
       \prod_{\substack{j\in U\\j\text{ internal}}}(1-p_j).
\]
This is the expectation of $c(N)$ for an independent Bernoulli subset
with inclusion probabilities $p_i$. The neighborhood lemma implies
the quantitative bound
\begin{equation}\tag{79}\label{eq:fractional-profile-distance}
 12\sum_i p_i-c(p)\geq\frac{\operatorname{dist}_1(p,\mathcal M)}{272}.
\end{equation}
For completeness, let $\epsilon$ be the probability that the Bernoulli
subset lies outside $\mathcal M$. The Boolean deficit is an integer
and at least one there, so the left side is at least $\epsilon$. No
two members of $\mathcal M$ differ in a single coordinate: the
distances from zero to a neighborhood are five, and between distinct
neighborhoods are six or ten. Conditioning on all other coordinates
therefore gives $\min(p_i,1-p_i)\leq\epsilon$ for each $i$. Round
every coordinate to its nearest Boolean value, obtaining $v$. If
$\epsilon<1/17$ and $v\notin\mathcal M$, then
\[
 \epsilon\geq\Pr(Y=v)
 \geq1-\sum_i\min(p_i,1-p_i)\geq1-16\epsilon>\epsilon,
\]
a contradiction. In this case
$\operatorname{dist}_1(p,\mathcal M)\leq16\epsilon$. For
$\epsilon\geq1/17$ the trivial distance bound sixteen gives
$272\epsilon$, proving (79). In particular, the only zero-deficit
profiles are the members of $\mathcal M$.

For a measurable row $f:[0,1]\to[0,1]$ set
$p_i(f)=a^{-1}\int_{A_i}f$. Interpret each element $r\in\mathcal M$
as its associated step-function row. Since $r$ is Boolean,
\begin{equation}\tag{80}\label{eq:row-profile-distance}
 \lVert f-r\rVert_1=a\lVert p(f)-r\rVert_1.
\end{equation}
The pentagon link of a new vertex with row $f$ in the perfect
background $W_0$ is exactly $a^4c(p(f))$, and its degree is
$a\sum_i p_i(f)$. Indeed, deleting that vertex leaves an induced
$P_4$. An induced $P_4$ in an independent-class blow-up uses four
distinct classes, because $P_4$ has no false twins. Its four row
factors can therefore be integrated separately. The two orientations
of each such path cancel the factor $1/2$ in the rooted pentagon link.

\subsection{Continuity and variations}

The following estimates hold also on a measurable subspace of total
mass at most one. The reference values are
\[
 E_0=40a^2=\frac5{32},\qquad
 P_0=192a^5=\frac3{16384},\qquad
 \kappa_0=\frac{5P_0}{2E_0}=12a^3.
\]
For graphons $U,V$, telescoping the ten factors in the induced
pentagon integrand gives
\begin{equation}\tag{81}\label{eq:l1-continuity}
 |P(U)-P(V)|\leq\lVert U-V\rVert_1,\qquad
 |E(U)-E(V)|\leq\tfrac12\lVert U-V\rVert_1.
\end{equation}
Thus $\kappa(W)=5P(W)/(2E(W))$ is Lipschitz in a fixed neighborhood
of $W_0$.

We also allow a measurable density $z\in[0,1]$ on the vertex measure.
Write $P(W,z)$, $E(W,z)$ for the resulting integrals,
$d_z(x)=\int W(x,y)z(y)\,dy$, and $L_z(x)$ for the first variation
of $P$ at a vertex. Explicitly $L_z$ is one half the integral of the
rooted induced pentagon product over the other four vertices, with
their densities included. For a differentiable density path,
\begin{equation}\tag{82}\label{eq:vertex-density-variation}
 \frac{d}{dt}F(W,z_t)
 =E(W,z_t)^{-5/2}\int z_t'(x)
       [L_{z_t}(x)-\kappa(W,z_t)d_{z_t}(x)]\,dx.
\end{equation}
If $z$ differs from one only on a set of measure $\mu$, uniformly in
$x$ one has
\begin{equation}\tag{83}\label{eq:density-deletion-errors}
 |L_z(x)-L_1(x)|\leq2\mu,\qquad
 |d_z(x)-d_1(x)|\leq\mu.
\end{equation}
The changes in $P$ and $E$ are at most $\mu/2$ and $\mu$,
respectively. These estimates follow by telescoping the four or five
vertex density factors. Keeping the row $f=W(x,\cdot)$ fixed, but
replacing the background among the other four vertices by $W_0$, gives
\begin{equation}\tag{84}\label{eq:background-replacement-error}
 |L_1(x)-a^4c(p(f))|\leq3\lVert W-W_0\rVert_1.
\end{equation}
There are six background factors, each with the rooted prefactor $1/2$.

We need a small-degree bound that does not assume triangle-freeness:
\begin{equation}\tag{85}\label{eq:local-small-degree-link}
 L_z(x)\leq\frac14d_z(x)^2E(W,z).
\end{equation}
To see this, fix the middle edge $bc$ of the rooted pentagon
$x-a-b-c-d-x$. Drop the chord condition between $a,d$. The two
remaining neighbor choices have masses
\begin{align*}
 r&=\int W(x,u)W(u,b)(1-W(u,c))z(u)\,du,\\
 s&=\int W(x,u)W(u,c)(1-W(u,b))z(u)\,du.
\end{align*}
The inequality $u(1-v)+v(1-u)\leq1$ for $u,v\in[0,1]$ gives
$r+s\leq d_z(x)$ and $rs\leq d_z(x)^2/4$. Drop the two factors
involving the chords $xb,xc$ and integrate the middle edge, retaining
its factor $1/2$. This proves (85).

For a symmetric edge variation $H$,
\begin{equation}\tag{86}\label{eq:edge-variation}
 \begin{aligned}
 DP(W)[H]&=\frac12\int q_W(x,y)H(x,y)\,dx\,dy,\\
 DF(W)[H]&=\frac1{2E(W)^{5/2}}\int(q_W-\kappa(W))H.
 \end{aligned}
\end{equation}
Here $q_W$ is the difference of two three-variable integrals: remove
one cycle-edge factor from the induced pentagon product for the
positive term, and remove one chord factor for the negative term.
The five cycle edges and five chords, together with the factor $1/10$,
give the prefactor $1/2$.

For a Boolean step graphon $T$ put
$e_x=\int|W(x,z)-T(x,z)|\,dz$ and $e=\lVert W-T\rVert_1$. Each
of the two integrands defining $q$ has three factors incident to $x$,
three incident to $y$, and three among the other vertices.
Telescoping yields
\begin{equation}\tag{87}\label{eq:edge-variation-error}
 |q_W(x,y)-q_T(x,y)|\leq6(e_x+e_y+e).
\end{equation}
By Lemma~A.3,
\begin{equation}\tag{88}\label{eq:edge-sign-margin}
 q_{W_0}(x,y)-\kappa_0
 =\begin{cases}
   12a^3,&\text{adjacent types},\\
   -18a^3,&\text{distinct nonadjacent types},\\
   -12a^3,&\text{equal types}.
  \end{cases}
\end{equation}
The pair coefficients are multiplied by $a^3$, the mass of each old
triple. Set $g=12a^3>0$. By continuity, a Clebsch step graphon with
class masses sufficiently close to $a$ retains these signs with margin
at least $3g/4$.

\subsection{Proof of the local theorem}

\begin{proof}[Proof of Theorem A.1]
Choose a fixed $\eta>0$ small enough that the $\eta$-balls around
the seventeen reference rows are disjoint and
\[
 \frac{\eta E_0}{2}<\frac{\kappa_0}{4},\qquad
 12\eta<\frac g8.
\]
We subsequently choose $\delta_0$ sufficiently small, with $\eta$
held fixed. Write $\delta=\lVert W-W_0\rVert_1$.

\emph{Remove rows with a definite link deficit.} Let $B$ consist of
points whose rows have distance at least $\eta$ from every reference
row. Since the original template row is one of these rows,
\[
 \mu_B:=|B|\leq\delta/\eta.
\]
By (79) and (80), the perfect-background deficit on $B$ is at least
$a^4\eta/17$. Along the deletion path $z_t=1-t\mathbf1_B$,
equations (81)--(84) therefore give, uniformly for almost every
$x\in B$,
\[
 L_{z_t}(x)-\kappa(W,z_t)d_{z_t}(x)
 \leq-\frac{a^4\eta}{17}+O(\delta+\mu_B)
 \leq-\frac{a^4\eta}{34}.
\]
The implicit constant depends only on the fixed template and the
fixed $\eta$. The last inequality follows by reducing $\delta_0$.
The edge mass stays bounded away from zero. By (82), deleting $B$
increases $F$ by at least $c_B\mu_B$ for some $c_B>0$.

\emph{Remove near-zero rows.} Among the remaining points let $Z$
consist of rows whose original distance from zero is less than $\eta$.
Their original degrees are at most $\eta$, and
\[
 \mu_Z:=|Z|\leq\frac\delta{5a-\eta},
\]
because each original template row has degree $5a$. Delete $Z$ after
$B$. During this deletion its degrees are still at most $\eta$, while
$E\leq2E_0$ and $\kappa\geq\kappa_0/2$ for sufficiently small
$\delta_0$. By (85),
\[
 L_{z_t}(x)-\kappa(W,z_t)d_{z_t}(x)
 \leq-\frac{\kappa_0}{4}d_{z_t}(x)\qquad(x\in Z).
\]
If $D_Z$ denotes the edge mass removed in this step, then
\[
 D_Z=\int_0^1\int_Z d_{z_t}(x)\,dx\,dt.
\]
Thus deleting $Z$ increases $F$ by at least $c_ZD_Z$ for a constant
$c_Z>0$. The deletion of a measurable set is equivalently its
replacement by isolates.

\emph{Assign the remaining rows to clone types.} Let
$M=[0,1]\setminus(B\cup Z)$. Every point of $M$ is within $\eta$ of
a unique nonzero reference row. Call its type $\sigma(x)$; let
$\tau(x)$ be its original part label, and put
$M_i=\{x\in M:\sigma(x)=i\}$. Distinct Clebsch rows have distance
at least $6a$, so the measure $\nu$ of the misassigned set
$\{\sigma\ne\tau\}$ satisfies
\[
 \nu\leq\frac\delta{6a-\eta}.
\]
In particular, for $s_i=|M_i|$,
\begin{equation}\tag{89}\label{eq:reclassified-masses}
 |s_i-a|\leq\mu_B+\mu_Z+\nu=O(\delta).
\end{equation}
On $M$ let $T(x,y)=A_{\sigma(x),\sigma(y)}$ be the ideal Clebsch
step graphon. For almost every $x\in M$,
\begin{equation}\tag{90}\label{eq:reclassified-errors}
 \int_M|W(x,y)-T(x,y)|\,dy\leq\eta+\nu,\qquad
 e_M:=\lVert W|_{M^2}-T\rVert_1\leq\delta+2\nu.
\end{equation}
For the first inequality, the assigned reference row differs from its
reclassified version only on the misassigned set. For the second, $T$
differs from the original template on pairs having at least one
misassigned endpoint.

\emph{Repair all remaining edge errors.} Consider
$W_t=T+t(W|_{M^2}-T)$ for $0\leq t\leq1$, on the measure space $M$.
The row errors and global error remain bounded by the right-hand
sides of (90) throughout this path. By (89), the margin for $T$ is at
least $3g/4$. Equations (81), (87) and (90) imply
\[
 |(q_{W_t}-\kappa(W_t))-(q_T-\kappa(T))|
 \leq12\eta+O(\delta)<g/4.
\]
Hence $q_{W_t}-\kappa(W_t)\geq g/2$ where $T=1$, and it is at most
$-g/2$ where $T=0$. The variation $H=W|_{M^2}-T$ has the opposite
sign on each region. From (86),
\[
 \frac{d}{dt}F(W_t)\leq-c_Ee_M
\]
for a fixed $c_E>0$. Integrating gives
$F(T)-F(W|_{M^2})\geq c_Ee_M$.

\emph{Balance the classes and estimate the distance.} Put $q=|M|$
and $V=(\sum_i(s_i-q/16)^2)^{1/2}$. The Hessian estimate (78) applies
by (89). Combining the four comparisons gives
\begin{equation}\tag{91}\label{eq:four-defects}
 F(W_0)-F(W)\geq c_B\mu_B+c_ZD_Z+c_Ee_M+c_HV^2.
\end{equation}
Isolating $B$ costs at most $2\mu_B$ in $L^1$, and then isolating $Z$
costs exactly $2D_Z$. Replacing the remaining graphon by $T$ costs
$e_M$. Within $M$, balance the class masses by moving a set of measure
$\tfrac12\sum_i|s_i-q/16|$ between classes; atomlessness permits this
operation. The resulting graphon belongs to $\mathcal E_{\mathrm{Cl}}$,
and this last change costs at most $\sum_i|s_i-q/16|\leq4V$. Thus
\[
 d_1(W,\mathcal E_{\mathrm{Cl}})\leq2\mu_B+2D_Z+e_M+4V.
\]
Since $\mu_B,D_Z,e_M\leq1$, its square is at most
$16\mu_B+16D_Z+4e_M+64V^2$. Together with (91) this proves (76),
after multiplying by $5^{5/2}$ and adjusting the positive constant.

If $R_{\mathrm{ind}}(W)=R_\ast$, all four nonnegative terms in (91)
vanish. Thus $B$ is null, $Z$ consists of actual isolates, the
remaining graphon is exactly $T$, and its sixteen class masses are
equal. This proves the equality assertion without assuming that the
distance infimum is attained. The converse follows by homogeneity.
\end{proof}


\section*{Computational verification and reproducibility}

The computational proof consists of explicit finite rational identities.
The supplementary material provides the seven-vertex graph atlas, the
rooted flag spaces, primitive pattern counts, three rational witness
files, and their generators and verifiers. The main certificate contains
$184$ positive square terms, $37$ unrestricted identity columns, and
$826$ positive host remainders. The multipliers of $\Phi_\triangle$ and
$J_{\mathrm{del}}$ are positive, while those of $J_0$ and $J_1$ are zero.
Every one of the $1,044$ host identities is checked with exact integer or
rational arithmetic. A second implementation contracts integer Gram
matrices against independently recounted host tensors. The equality-span
witnesses are bound to the same main certificate and checked coordinate
by coordinate. Numerical optimization is used only to discover witnesses;
its floating-point output is not accepted as proof.

The supplementary programs also enumerate all $65,536$ Boolean Clebsch
profiles and all $6,496$ small-offset presentations under the convention
in Section~14.1. An independent enumeration of labeled exceptional
vertices gives the same canonical family. Formula (75) is recomputed for
every presentation, and the distinct polynomial formulas are additionally
checked on actual finite graphs. The branch counts in Section~14.1 have
a separate direct census check. These are verifications of the finite
inputs and construction formulas. The global bound, stability arguments,
and passage to eventual extremality are proved in the text. In
particular, the computations do not provide a numerical global threshold
$k_0(j)$.

The command \texttt{python3 src/verify\_project.py} reruns the complete
finite verification chain. Definitions, normalization conventions,
dependencies, and individual commands are specified in the accompanying
\texttt{README.md} and computational reproduction guide.

\paragraph{AI-assisted research methodology.}
ChatGPT (OpenAI) assisted with proof exploration, symbolic calculations,
implementation of generators and verifiers, and internal mathematical
cross-checks. Its mathematical output was treated as candidate reasoning,
not as a certificate of correctness. The supplied deterministic programs
perform the exact finite checks described above, and the analytic
arguments are given explicitly in the paper. 

\section*{Declaration of generative AI and AI-assisted technologies}

ChatGPT (OpenAI) was used in drafting and revising the manuscript and in
organizing computational documentation. Its use in mathematical
exploration and code development is described in the preceding
methodological statement. The author is responsible for the mathematical
claims, computations, references, and final text of this article.


\bibliographystyle{amsplain}

\begin{thebibliography}{10}

\bibitem{Balogh2016}
J{\'o}zsef Balogh, Ping Hu, Bernard Lidick{\'y}, and Florian Pfender,
  \emph{Maximum density of induced 5-cycle is achieved by an iterated blow-up
  of 5-cycle}, European Journal of Combinatorics \textbf{52} (2016), 47--58.

\bibitem{Bodnar2026Toolbox}
Levente Bodn{\'a}r, \emph{{FlagAlgebraToolbox}: Flag algebra computations in
  {SageMath}}, 2026, arXiv:2601.06590v2.

\bibitem{BodnarEtAl2026Six}
Levente Bodn{\'a}r, Jun Gao, Jared Le{\'o}n, Xizhi Liu, Oleg Pikhurko, and
  Shumin Sun, \emph{The inducibility of 6-vertex graphs}, 2026,
  arXiv:2606.00290v3.

\bibitem{BodnarPikhurko2026}
Levente Bodn{\'a}r and Oleg Pikhurko, \emph{Some exact inducibility-type
  results for graphs via flag algebras}, Electronic Journal of Combinatorics
  \textbf{33} (2026), no.~2, Paper No. P2.39.

\bibitem{Chao2025}
Ting-Wei Chao, Asaf {Cohen Antonir}, Anqi Li, and Hung-Hsun~Hans Yu, \emph{Edge
  inducibility via local directed graphs}, 2025, arXiv:2509.24064v1 [math.CO].

\bibitem{Davey2026}
Eoin Davey, Eoin Hurley, R{\'e}mi {de Joannis de Verclos}, Ross~J. Kang, and
  Jan Volec, \emph{Local flag algebras}, 2026, arXiv:2607.12461v1 [math.CO].

\bibitem{Lidicky2023}
Bernard Lidick{\'y}, Connor Mattes, and Florian Pfender, \emph{{$C_5$} is
  almost a fractalizer}, Journal of Graph Theory \textbf{104} (2023), no.~1,
  220--244.

\bibitem{LovaszSzegedy2006}
L{\'a}szl{\'o} Lov{\'a}sz and Bal{\'a}zs Szegedy, \emph{Limits of dense graph
  sequences}, Journal of Combinatorial Theory, Series B \textbf{96} (2006),
  no.~6, 933--957.

\bibitem{LovaszSzegedy2007}
\bysame, \emph{Szemer{\'e}di's lemma for the analyst}, Geometric and Functional
  Analysis \textbf{17} (2007), no.~1, 252--270.

\bibitem{MotzkinStraus1965}
T.~S. Motzkin and E.~G. Straus, \emph{Maxima for graphs and a new proof of a
  theorem of {Tur{\'a}n}}, Canadian Journal of Mathematics \textbf{17} (1965),
  533--540.

\bibitem{Pikhurko2019}
Oleg Pikhurko, Jakub Slia{\v c}an, and Konstantinos Tyros, \emph{Strong forms
  of stability from flag algebra calculations}, Journal of Combinatorial
  Theory, Series B \textbf{135} (2019), 129--178.

\bibitem{Razborov2007}
Alexander~A. Razborov, \emph{Flag algebras}, Journal of Symbolic Logic
  \textbf{72} (2007), no.~4, 1239--1282.

\bibitem{Wang2026}
Yichen Wang, Xiamiao Zhao, and Mei Lu, \emph{Edge version of the inducibility
  via the entropy method}, Electronic Journal of Combinatorics \textbf{33}
  (2026), no.~2, Paper No. P2.17, 16 pp.

\end{thebibliography}
\providecommand{\bysame}{\leavevmode\hbox to3em{\hrulefill}\thinspace}
\providecommand{\MR}{\relax\ifhmode\unskip\space\fi MR }
\providecommand{\MRhref}[2]{%
  \href{http://www.ams.org/mathscinet-getitem?mr=#1}{#2}
}
\providecommand{\href}[2]{#2}

\end{document}